\documentclass[12pt]{amsart}

 \usepackage{amsfonts,graphics,amsmath,amsthm,amsfonts,amscd,amssymb,amsmath,latexsym,multicol,
 mathrsfs}
\usepackage{epsfig,url}
\usepackage{flafter}
\usepackage{fancyhdr}
\usepackage{color}
\usepackage{hyperref}
\hypersetup{colorlinks=true, linkcolor=black}
%%%%%%%%%%%
\usepackage{ulem}

%%%%%%%%%%%%
\addtolength{\oddsidemargin}{-0.3in}
\addtolength{\evensidemargin}{-0.3in}
\addtolength{\textwidth}{0.6in}

\addtolength{\topmargin}{-0.4in}
\addtolength{\textheight}{0.7in}

%%%%%%%%%%%%

\makeatletter

\def\jobis#1{FF\fi
  \def\predicate{#1}%
  \edef\predicate{\expandafter\strip@prefix\meaning\predicate}%
  \edef\job{\jobname}%
  \ifx\job\predicate
}

\makeatother

\if\jobis{proposal}%
\else
\fi

 \usepackage[matrix, arrow]{xy}

\DeclareMathOperator{\Supp}{Supp}

\DeclareMathOperator{\Spec}{Spec}

\DeclareMathOperator{\Pic}{Pic}

 %\newcounter{thm}[theorem]
 % all theorems are numbered by the subsection counter

 \numberwithin{equation}{subsection}
 \numberwithin{footnote}{subsection}

 \newtheorem{cor}[subsection]{Corollary}
 \newtheorem{lem}[subsection]{Lemma}
 \newtheorem{prop}[subsection]{Proposition}
 \newtheorem{thm}[subsection]{Theorem}
 \newtheorem{conj}[subsection]{Conjecture}

{%_%_%_%_% upright style; roman (non-italic) text
    \newtheoremstyle{upright}%
        {8pt plus2pt minus4pt}%
        {8pt plus2pt minus4pt}%
        {\upshape}%
        {}%
        {\bfseries\scshape}%
        {}%
        {1em}%
        {}%
\theoremstyle{upright}

 \newtheorem{rem}[subsection]{Remark}

}

 \newcommand{\PP}{\mathbb P}
 
 \newcommand{\Q}{\mathbb Q}
 \newcommand{\R}{\mathbb R}
 \newcommand{\Z}{\mathbb Z}
 \newcommand{\bir}{\dashrightarrow}
 \newcommand{\rddown}[1]{\left\lfloor{#1}\right\rfloor} % round-down

%%%%%%%%%%%%%%%%%%%%%%%%%%%%%%%%%%%%%%%%%%%%%%%%%%%%%

\title{Iitaka's $C_{n,m}$ conjecture for $3$-folds over finite fields}
\author{Caucher Birkar, Yifei Chen, and Lei Zhang}
\date{\today}
\begin{document}
\maketitle

\begin{abstract}
We prove Iitaka's $C_{n,m}$ conjecture for $3$-folds over the algebraic closure of finite fields.
Along the way we prove some results on the birational geometry of log surfaces over nonclosed fields
and apply these to existence of relative good minimal models of $3$-folds.
\end{abstract}

\tableofcontents

%%%%%%%%%%%%%%%%%%%%%%%%
%%%%%%%%%%%%%%%%%%%%%%%%%%

\section{Introduction}

\subsection*{Iitaka's conjecture}
Let $X$ be a normal projective variety over a field $k$, $L$  a Cartier divisor on $X$, and $N(L)$ the set of all positive integers $m$ such that the linear system $|mL| \neq \emptyset$. For an integer $m \in N(L)$, let $\Phi_{|mL|}$ be the rational map defined by  $|mL|$. The Kodaira dimension
$\kappa(L)$ is defined as
\[\kappa(L) =\left\{
\begin{array}{lll}
-\infty,&\text{if }& N(L) = \emptyset\\
\max \{\dim \Phi_{|mL|}(X)\mid m \in N(L) \} &\text{if}& N(L) \neq \emptyset
\end{array}\right.
\]
If $L$ is a $\Q$-Cartier divisor, $\kappa(L):=\kappa(mL)$ for any natural number $m$ so that $mL$ is Cartier.
This does not depend on the choice of $m$.

The following conjecture due to Iitaka (in characteristic zero) is of fundamental importance in the classification theory of algebraic varieties.

\begin{conj}[$C_{n,m}$]\label{conj-iitaka}
Let $f\colon X\to Z$ be a contraction between smooth projective varieties
of dimension $n,m$ respectively, over an
algebraically closed field $k$. Assume the generic fibre $F$ is smooth.   Then
$$
\kappa(K_X)\ge \kappa(K_F)+\kappa(K_Z)
$$
\end{conj}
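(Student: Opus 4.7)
The plan is to prove Conjecture \ref{conj-iitaka} for $n=3$ over $k = \overline{\mathbb{F}_p}$ by splitting into cases according to $m = \dim Z$. The extremes are immediate: when $m=0$, $F = X$ and $\kappa(K_Z) = 0$, so the statement is tautological; when $m=3$, $f$ is birational between smooth varieties, so $K_X - f^*K_Z$ is effective and pulling back sections gives $\kappa(K_X) \geq \kappa(K_Z) = \kappa(K_F) + \kappa(K_Z)$ since $\kappa(K_F)=0$. The content lies in $m=1$ and $m=2$. In both cases the first step is to replace $X$ by a relative good minimal model over $Z$; according to the abstract, the existence of such models in this three-dimensional setting is one of the main outputs of the earlier part of the paper, and reduces the problem to combining positivity of $K_Z$ with positivity of the direct image sheaves $f_*\mathcal{O}_X(NK_{X/Z})$ for suitably divisible $N$.

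For $m=2$ the generic fibre $F$ is a smooth projective curve over $k(Z)$, and I would subdivide further by the genus $g$ of $F$. If $g \geq 2$, then on the relative minimal model $K_{X/Z}$ is $f$-nef and big on $F$, and a Viehweg-type weak positivity statement for $f_*\omega_{X/Z}^{\otimes N}$ combined with sections of $NK_Z$ delivers the inequality. If $g=1$, a canonical bundle formula $K_X \sim_{\Q} f^*(K_Z + B_Z + M_Z)$ with effective discriminant $B_Z$ and pseudo-effective moduli part $M_Z$ reduces matters to $\kappa(K_Z + B_Z + M_Z) \geq \kappa(K_Z)$, which is automatic. If $g=0$ then $\kappa(K_F) = -\infty$ and $X/Z$ is a conic bundle; here one only needs $\kappa(K_X) \geq \kappa(K_Z)$, which follows by pulling back pluricanonical sections of $K_Z$ and tracking $f$-exceptional contributions.

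For $m=1$, $Z$ is a smooth curve and the generic fibre $F$ is a smooth projective surface over the function field $k(Z)$. Running a relative MMP over $Z$, one reaches either a relative good minimal model (when $F$ is not uniruled, i.e.\ $\kappa(K_F) \geq 0$) or a relative Mori fibre space. In the former case, the abundance theorem for log surfaces over nonclosed fields established earlier in the paper handles the generic fibre, and weak positivity of $f_*\omega_{X/Z}^{\otimes N}$ on the curve $Z$ combined with $\kappa(K_Z)$ yields the bound. In the Mori fibre space case one factors $f$ through the Mori contraction and invokes the previously treated lower-dimensional instances of the conjecture.

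The main obstacle is the positivity of direct image sheaves in characteristic $p$: the standard proofs of Viehweg-type weak positivity rely on Kodaira vanishing and Hodge-theoretic semipositivity, both of which can fail here, and the generic fibre need not be geometrically normal over the imperfect field $k(Z)$. The route around this, as signalled in the abstract, is the careful birational classification of log surfaces over nonclosed fields; this plays the role of semistable reduction on the generic fibre and produces just enough positivity of the moduli/direct-image part to force the Kodaira-dimension inequality through.
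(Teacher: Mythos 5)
Your skeleton (dispose of $m=0,3$, treat $m=2$ by relative-dimension-one results, and for $m=1$ pass to a relative good minimal model over the curve $Z$) matches the paper's reduction, but the proposal stops exactly where the paper's real work begins, and the step you summarize as ``weak positivity of $f_*\omega_{X/Z}^{\otimes N}$ on the curve $Z$ combined with $\kappa(K_Z)$ yields the bound'' is a genuine gap. First, in char $p$ the positivity of direct images is not Viehweg weak positivity but Patakfalvi's semipositivity (Theorem \ref{t-Patakfalvi-main}), which only applies after passing to a relative minimal/canonical model with $K_X$ nef/ample over $Z$ and with strongly $F$-regular general fibres; this has to be verified (the paper does so, including showing the generic fibre stays smooth after the relative MMP). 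Second, and more seriously, when $\kappa(K_Z)=0$, i.e.\ $Z$ is an elliptic curve, nefness of $f_*\omega_{X/Z}^m$ with degree zero gives only $\kappa(K_X)\ge 0$ (Proposition \ref{k0}); it does not give $\kappa(K_X)\ge\kappa(K_F)$, which is the heart of the matter. The paper closes this by case analysis on $\kappa(K_F)$: for $\kappa(K_F)=0$ it writes $K_{X/Z}\sim_\Q f^*M$ via Theorem \ref{t-3d-rel-mmodel} and Lemma \ref{l-linear-pullback} and uses that over $\bar{\mathbb{F}}_p$ a nef divisor of degree $0$ on a curve has $\kappa\ge 0$ (numerically trivial implies torsion); for $\kappa(K_F)=1$ it factors $X$ through an elliptic fibration over a surface, applies Theorem \ref{eft} and Tanaka's abundance over $\bar{\mathbb{F}}_p$ (Theorem \ref{abd-dim2}) to make $K_X$ semi-ample, and then concludes $\kappa(K_X)=\nu(K_X)\ge\kappa(K_Z)+1$; for $\kappa(K_F)=2$ over an elliptic base it proves Proposition \ref{k2}, a substantial argument using Keel's semi-ampleness on the support of a pluricanonical member, adjunction and abundance for surfaces over $\bar{\mathbb{F}}_p$, Riemann--Roch on $Z$ via \'etale covers, and the exclusion of $\kappa(K_X)=1$ through Corollary \ref{c-k=1-it-fib}. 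Your proposal contains no substitute for any of these arguments, and none of them follows formally from positivity of direct images plus $\kappa(K_Z)\ge 0$.

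A secondary misreading: you assign to the log-surface theory over nonclosed fields the role of ``semistable reduction'' producing positivity of a moduli part. In the paper that theory is used only to prove semi-ampleness of $(K_X+B)|_F$ on the generic fibre and hence existence of relative good minimal models (Theorem \ref{t-3d-rel-mmodel}); the positivity over the base comes from Patakfalvi's theorem together with $\bar{\mathbb{F}}_p$-specific semi-ampleness results of Keel and Tanaka. Likewise, in your $m=2$, genus-one case you invoke a canonical bundle formula with pseudo-effective moduli part, which is not available in positive characteristic in the form you use; the paper simply quotes the relative-dimension-one theorem of Chen--Zhang for all of $C_{3,2}$, and for elliptic fibrations only the weaker statement $\kappa(K_{X/Z})\ge 0$ (Theorem \ref{eft}).
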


One can formulate a more general problem when $F$ is not smooth either by assuming it is geometrically integral 
with a resolution or by considering $F$ as a variety over the function field of $Z$ (for example, see Corollary \ref{c-main}).

Over the complex numbers, the conjecture has been studied by Kawamata [\ref{Ka0}][\ref{Ka1}][\ref{Ka2}],  Koll\'ar [\ref{Ko}], Viehweg [\ref{Vie1}][\ref{Vie2}][\ref{Vie2}], Birkar [\ref{bi09}], Chen and Hacon [\ref{CH}], Cao and P\v aun [\ref{Cao-Paun}], etc. We refer to [\ref{cz13}] for a collection of results over $\mathbb{C}$.  In positive characteristic, Chen and Zhang proved the conjecture for fibrations of relative dimension one [\ref{cz13}], and Patakfalvi proved it when $Z$ is of general type and the generic geometric fibre satisfies certain properties [\ref{pa13}, Theorem 1.1] (see also [\ref{pa12}, Corollary 4.6]).

In this paper, we will prove:

\begin{thm}\label{t-main}
Conjecture $C_{n,m}$ holds when $n=3$, $k = \bar{\mathbb{F}}_p$, and $p > 5$.
\end{thm}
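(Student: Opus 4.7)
The proof splits according to $m = \dim Z$. The extremes $m = 0$ and $m = 3$ are essentially trivial (when $m = 0$ the generic fibre equals $X$; when $m = 3$ the map is birational), and the case $m = 2$ of relative dimension one is exactly the main result of Chen--Zhang [\ref{cz13}]. So the whole content reduces to $m = 1$: a contraction $f\colon X \to Z$ from a smooth projective $3$-fold to a smooth projective curve with smooth generic fibre a surface $F$. One may further assume $\kappa(K_F) \geq 0$, since otherwise the inequality is vacuous.

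In this setting I would run a $K_X$-MMP relative to $Z$. Since $\dim X = 3$ and $p > 5$, the required MMP in positive characteristic is available through the work of Hacon--Xu, Birkar, and Birkar--Waldron. The MMP terminates either at a Mori fibre space over $Z$ or at a relative minimal model $X' \to Z$ with $K_{X'}$ nef over $Z$. The Mori fibre space alternative is ruled out by $\kappa(K_F) \geq 0$: whether the Mori contraction has relative dimension $2$ or $1$ over $Z$, the generic fibre of $X' \to Z$ is respectively a del Pezzo surface or a $\PP^1$-bundle over a curve, and in either case $\kappa = -\infty$. Hence $K_{X'}$ is nef$/Z$.

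The heart of the argument, and the principal obstacle, is to upgrade this to a \emph{good} relative minimal model, i.e.\ to prove that $K_{X'}$ is semiample over $Z$. The restriction of $K_{X'}$ to the generic fibre is $K_F$, and since $F$ is a surface over the non-closed field $k(Z)$, this demands abundance and MMP for log surfaces over nonclosed fields in positive characteristic; this is precisely what the paper's main auxiliary results on surfaces over nonclosed fields are designed to supply. Granting semiampleness$/Z$, one obtains the relative Iitaka fibration $\pi\colon X' \to Y$ over $h\colon Y \to Z$ with $mK_{X'} \sim \pi^* A$, with $A$ ample$/Z$ and $\dim Y = \kappa(K_F) + 1$.

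To extract the Iitaka inequality, I would invoke a canonical bundle formula in the style of Kawamata--Ambro, writing $A \sim_\Q N(K_{Y/Z} + B + M)$ with $B \geq 0$ and moduli part $M$ at least pseudo-effective (pseudo-effectivity here again relying on the positive-characteristic surface results). Since $Z$ is a curve, $\kappa(K_Z) \in \{-\infty, 0, 1\}$ and $K_Z \geq 0$ whenever $\kappa(K_Z) \geq 0$. A direct case analysis combining the ample restriction $(K_{Y/Z} + B + M)|_{Y_\eta}$ of dimension $\kappa(K_F)$ with the base contribution from $h^* K_Z$ then yields $\kappa(K_X) \geq \kappa(K_F) + \kappa(K_Z)$. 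The hardest step throughout is the semiampleness step, which is where the paper's new technical input on log surfaces over nonclosed fields does all the real work.
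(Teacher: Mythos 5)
Your reduction to $m=1$, the passage to a relative minimal model, the exclusion of the Mori fibre space outcome, and the identification of relative good minimal models (Theorem \ref{t-3d-rel-mmodel}, resting on the surface results over nonclosed fields) all match the paper. The gap is in your final step. You propose to conclude via a Kawamata--Ambro type canonical bundle formula $A\sim_\Q N(K_{Y/Z}+B+M)$ with pseudo-effective moduli part $M$, but no such formula is known in positive characteristic except in relative dimension one; it is a major open problem, and the surface results of this paper do not supply it. The paper deliberately avoids it: the only canonical-bundle-formula input it uses is the weak statement $\kappa(K_{X/Y})\ge 0$ for elliptic fibrations from [\ref{cz13}] (Theorem \ref{eft}), which enters only in the case $\kappa(K_F)=1$. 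For the other cases the positivity you are asserting is replaced by two quite different ingredients that your sketch never invokes: Patakfalvi's semi-positivity theorems (Theorem \ref{t-Patakfalvi-main}), giving nefness of $K_{X/Z}$ and of $f_*\omega_{X/Z}^m$ when the general fibres are strongly $F$-regular, and the finite-field fact that numerically trivial divisors over $\bar{\mathbb{F}}_p$ are torsion, which is what turns ``$\deg\ge 0$ on the base curve'' into $\kappa\ge 0$ (this is precisely why the theorem is stated over $\bar{\mathbb{F}}_p$, a point your argument does not use at all).

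Relatedly, you locate the hardest step at relative semiampleness, but in the paper that is comparatively soft (when $\kappa(K_F)=2$ it is just base point freeness for big nef $K_X$ over $Z$). The genuinely hard case is $\kappa(K_F)=2$ with $Z$ an elliptic curve, where $K_Z\sim_\Q 0$ contributes nothing and $\deg f_*\omega_{X/Z}^m$ may vanish, so neither your moduli-part pseudo-effectivity nor a ``direct case analysis'' produces the two sections needed for $\kappa(K_X)\ge 2$. The paper handles this in Proposition \ref{k2} by a delicate argument: passing to the relative canonical model, producing one section via Riemann--Roch for nef degree-zero vector bundles on the elliptic base and \'etale covers (Proposition \ref{k0}), proving semiampleness of $K_X|_M$ on the divisor $M\sim lK_X$ via adjunction and Keel's criteria, and then a cohomological argument plus Corollary \ref{c-k=1-it-fib} to exclude $\kappa(K_X)\le 1$. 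Without a replacement for this, and for the char-$p$ canonical bundle formula you assume, your proposal does not close.
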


The case $C_{3,2}$ follows from [\ref{cz13}], so the main result here is $C_{3,1}$.
Our main tools are the log minimal model program for $3$-folds developed recently by Hacon, Xu, and Birkar [\ref{hx13}][\ref{bi13}][\ref{xu13}], birational geometry of log surfaces over nonclosed fields (see below), and the semi-positivity results of Patakfalvi [\ref{pa12}]. The reason for the restriction $k = \bar{\mathbb{F}}_p$ is that it is often easier to prove
semi-ampleness of divisors over finite fields; for example if $K_X\sim_\Q f^* D$ for some $D\equiv 0$ on $Z$,
then $D\sim_\Q 0$ is automatic over $k = \bar{\mathbb{F}}_p$ but the same conclusion would perhaps require substantial effort
over other fields; this is a major issue also in characteristic zero [\ref{Ka1}].\\

Since resolution theory holds in dimension three in positive characteristics, we get
\begin{cor}\label{c-main}
Let $f\colon X\to Z$ be a contraction, from a smooth projective three dimensional variety to a smooth projective curve over $\bar{\mathbb{F}}_p, p>5$. Let $\tilde{F}$ be a smooth model of generic geometric fibre of $f$. Then
$$
\kappa(K_X)\geq \kappa(K_{\tilde{F}})+\kappa(K_Z)
$$
\end{cor}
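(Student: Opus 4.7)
The plan is to reduce Corollary \ref{c-main} to Theorem \ref{t-main}. The obstruction to applying Theorem \ref{t-main} directly is that, although $X$ is smooth over $\bar{\mathbb{F}}_p$, the generic fibre $F = X_{k(Z)}$ of $f$ is only known to be regular over $k(Z)$ and may fail to be smooth (i.e.\ geometrically regular) in positive characteristic. I would remove this defect by passing to a sufficiently large purely inseparable base change of $Z$, which produces a fibration with smooth generic fibre, and then translate the inequality of Theorem \ref{t-main} back.

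In detail, I would produce a finite purely inseparable morphism $Z' \to Z$ of smooth projective curves over $\bar{\mathbb{F}}_p$ such that the normalization of $X \times_Z Z'$, followed by a 3-fold resolution of singularities in positive characteristic (Cossart--Piltant), yields a smooth projective contraction $f' \colon X' \to Z'$ whose generic fibre $F'$ is smooth over $k(Z')$ and birational over $\overline{k(Z')}$ to $\tilde F$. The existence of such a $Z'$ follows from the fact that, after enough Frobenius base changes, the (normalized) geometric generic fibre stabilizes as a geometrically regular surface, together with two-dimensional resolution of singularities spread out over a finite purely inseparable sub-extension.

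Applying Theorem \ref{t-main} to $f'$ gives
\[
\kappa(K_{X'}) \ge \kappa(K_{F'}) + \kappa(K_{Z'}).
\]
Since $Z' \to Z$ is finite purely inseparable and $\bar{\mathbb{F}}_p$ is perfect, $Z'$ is abstractly isomorphic to $Z$ over $\bar{\mathbb{F}}_p$, giving $\kappa(K_{Z'}) = \kappa(K_Z)$. Since $F'$ is smooth over $k(Z')$ and birational to $\tilde F$ after base change to the common algebraic closure, and Kodaira dimension of smooth projective varieties is a birational invariant, $\kappa(K_{F'}) = \kappa(K_{\tilde F})$. Finally, $X'$ and $X$ agree up to a birational modification and a Frobenius identification, so $\kappa(K_{X'}) = \kappa(K_X)$. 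Combining these identifications yields the desired inequality.

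The main obstacle is the first step --- ensuring that after a suitable purely inseparable base change, normalization, and resolution one indeed obtains a smooth generic fibre, and that the three Kodaira dimensions pass correctly through these operations. Although each ingredient (purely inseparable base change, the Cossart--Piltant resolution of $3$-folds, Frobenius descent over a perfect field) is standard, combining them requires care with canonical divisors under inseparable morphisms, a subtlety manageable here precisely because $\bar{\mathbb{F}}_p$ is perfect.
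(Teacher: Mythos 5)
Your overall strategy is the paper's: base change along the $e$-th Frobenius $F^e\colon Z'\to Z$ for $e\gg 0$, resolve (the normalization of) $X\times_Z Z'$ to get a fibration $f'\colon X'\to Z'$ whose generic fibre $F'$ is smooth, apply Theorem \ref{t-main} to $f'$, and use $\kappa(K_{Z'})=\kappa(K_Z)$ and $\kappa(K_{F'})=\kappa(K_{\tilde F})$. The genuine gap is your final identification $\kappa(K_{X'})=\kappa(K_X)$, justified by ``$X'$ and $X$ agree up to a birational modification and a Frobenius identification.'' They do not: the composite $\sigma\colon X'\to X\times_Z Z'\to X$ is generically finite \emph{purely inseparable of degree $p^e$}, not birational, and Kodaira dimension is not invariant under such maps. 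The relevant inseparability lives over the (imperfect) function field $K(Z)$, so perfectness of $\bar{\mathbb{F}}_p$ does not rescue the step; indeed, after Frobenius base change and normalization/resolution the canonical divisor genuinely changes (this is exactly the wild-fibration phenomenon, where $\kappa$ can even drop), so the asserted equality is unproved and in general one should only expect an inequality.

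What is needed, and what the paper supplies, is a canonical divisor comparison in the one direction actually used: by [\ref{cz13}, Theorem 2.4] there is an effective $\sigma$-exceptional divisor $E$ on $X'$ with $K_{X'/Z'}\le \sigma^*K_{X/Z}+E$; since $\sigma^*f^*K_Z=p^e\, f'^*K_{Z'}$ this becomes $K_{X'}+(p^e-1)f'^*K_{Z'}\le \sigma^*K_X+E$. One may assume $\kappa(K_Z)\ge 0$ (otherwise the statement is vacuous), so $K_{Z'}$ can be taken effective, and then the Covering Theorem \ref{ct} gives
$$
\kappa(K_X)=\kappa(\sigma^*K_X+E)\ \ge\ \kappa\bigl(K_{X'}+(p^e-1)f'^*K_{Z'}\bigr)\ \ge\ \kappa(K_{X'})\ \ge\ \kappa(K_{\tilde F})+\kappa(K_Z).
$$
Without this comparison (or an equivalent control of $K_{X'}$ against $\sigma^*K_X$), your argument does not close; the rest of your reduction (choice of $e$ via descent of a desingularization from $K(Z)^{1/p^{\infty}}$, smoothness of $F'$, and the identifications for $Z'$ and $F'$) is consistent with the paper's proof.
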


\subsection*{Log surfaces over nonclosed fields}

Let $X\to Z$ be a contraction between normal varieties and let $F$ be its generic fibre.
As is well-known, in char $p>0$, $F$ may not be smooth even if $X$ and $Z$ are smooth.
Actually $F$ may even be geometrically non-reduced. This creates difficulties because proofs in
birational geometry are often based on induction and in this case we cannot simply apply induction
and lift information from $F$ to $X$. On the other hand, $F$ has nice properties if
we think of it as a variety over the function field of $Z$ without passing to the algebraic closure of this function field.
For example, if $X$ is smooth, then $F$ is regular.
In particular, relevant to this paper is the case in which $X$ is a $3$-fold and $Z$ is a curve.
So it is natural for us to consider surfaces over a not necessarily algebraically closed field $k$.

It is easy to define pairs, singularities, minimal models, etc over an arbitrary field.
See \ref{d-pairs} and \ref{d-mmodel} for more details.

\begin{thm}\label{t-lmmp-nonclosed-surf}
Let $(X,B)$ be a projective dlt pair of dimension two over a field $k$ where $B$ is a $\Q$-boundary.
Then we can run an LMMP on $K_X+B$ which ends with a log minimal model or a Mori fibre space.
\end{thm}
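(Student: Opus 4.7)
The plan is to base-change to an algebraic closure, apply the known LMMP for dlt surface pairs over $\bar k$ (classical in characteristic zero; Tanaka in positive characteristic), and descend each step back to $k$ via Galois-equivariance. Set $\bar X := X \times_k \bar k$ and let $\bar B$ denote the pullback of $B$. First, one arranges that $(\bar X, \bar B)$ is a projective dlt pair of dimension two over $\bar k$. When $k$ is perfect this is immediate; when $k$ is imperfect, one may first pass to the perfect closure $k^{1/p^\infty}$, which is a purely inseparable extension inducing a universal homeomorphism $X^{1/p^\infty}\to X$, hence an isomorphism on Mori cones, so the reduction can be carried out there before base-changing further.

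\textbf{Cone, contraction, descent.} Let $G := \mathrm{Gal}(\bar k/k)$. A $(K_X+B)$-negative extremal ray $R$ of $\NEbar(X)$ corresponds, under the natural map $\NEbar(\bar X) \to \NEbar(X)$, to a $G$-invariant $(K_{\bar X}+\bar B)$-negative extremal face $F$ of $\NEbar(\bar X)$, spanned by a single $G$-orbit $\{R_1,\ldots,R_m\}$ of $(K_{\bar X}+\bar B)$-negative extremal rays on $\bar X$. Each $R_i$ is contractible on $\bar X$ by the surface LMMP over $\bar k$, and the simultaneous contraction $\cont_F\colon \bar X\to\bar Y$ of the whole face exists and is $G$-equivariant (by uniqueness up to isomorphism). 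By Galois descent for quasi-projective schemes, it descends to a projective contraction $X\to Y$ over $k$. If $\dim Y<2$ this is a Mori fibre space and we stop; otherwise, since no small contractions exist in dimension two (the exceptional locus is forced to be a curve, hence a divisor), the contraction is divisorial, the pushed-forward pair $(Y,B_Y)$ is again projective dlt of dimension two over $k$, and $\rho(Y/k)=\rho(X/k)-1$. Iterating and using $\rho(X/k)\in \Z_{\ge 0}$, the process terminates.

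\textbf{Main obstacle.} The main technical challenge is the imperfect-$k$ case: base-changing to $\bar k$ can destroy normality or worsen singularities, and one must ensure that dlt-ness and the Mori cone behave well enough under $X \mapsto \bar X$ for the classical surface LMMP to transport back in a Galois-equivariant way. Once this is in hand --- e.g.\ via the perfection step above, coupled with a careful comparison of extremal rays and their contractions on $X$, $X^{1/p^\infty}$, and $\bar X$ --- the remainder is a formal consequence of the surface LMMP over $\bar k$ together with the absence of small contractions in dimension two.
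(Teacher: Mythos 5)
There is a genuine gap, and it sits exactly where you locate the ``main obstacle'': the imperfect case, which is the case the theorem is actually needed for (the generic fibre of a $3$-fold fibred over a curve lives over the imperfect function field of the curve). Your patch --- pass to $k^{1/p^\infty}$, note that $X\times_k k^{1/p^\infty}\to X$ is a universal homeomorphism, and conclude that ``the reduction can be carried out there'' --- does not work. The base change of a normal surface along a purely inseparable extension can fail to be normal (or even reduced), and after taking the normalization of the reduction, the canonical divisor is \emph{not} the pullback of $K_X$: one only has $K_Y+C=\mu^*K_{X_{k'}}$ for some effective divisor $C$, which may have large coefficients (this is precisely the subject of Tanaka's paper on behaviour of canonical divisors under purely inseparable base change, cited as [\ref{ta15-1}]). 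A homeomorphism of topological spaces identifies curves and even intersection numbers against pulled-back Cartier divisors, but the divisor class you need to run the LMMP for, namely the pullback of $K_X+B$, differs from $K_Y+B_Y$ by $-C$; consequently neither dlt-ness nor $(K+B)$-negativity of extremal rays transfers, and the steps of an LMMP for $(Y,B_Y+C)$ over $\bar k$ (even granting $C$ is a boundary, which it need not be) do not descend to steps of a $(K_X+B)$-LMMP on $X$. Your Galois-descent mechanism is also only available for the separable part of $\bar k/k$, so it cannot absorb this inseparable discrepancy. Over a perfect $k$ the outline (orbit of extremal rays, equivariant contraction of the face, descent, no small contractions in dimension two, induction on $\rho$) is essentially fine modulo citing Tanaka's surface LMMP over $\bar k$ and justifying contractibility of a whole Galois-orbit face, but that is the easy case.

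By contrast, the paper deliberately avoids any base change and works directly over $k$: nefness thresholds are shown to be rational and the relevant nef and big divisors semi-ample via Keel's theorem (checking $L|_{\mathbb{E}(L)}\sim_\Q 0$ by adjunction to a regular curve with $\deg K<0$, hence $\Pic^0=0$, Lemma \ref{rc}), $\Q$-factoriality of the contracted model is proved by producing semi-ample supporting divisors, and the Mori fibre space case with irrational threshold is excluded by a Riemann--Roch computation on a regular surface. If you want to salvage a base-change proof you would have to control the divisor $C$ above (as in [\ref{ta15-1}]--[\ref{ta15-2}]), which is substantially harder than the direct argument and is exactly what the paper's self-contained approach is designed to bypass.
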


\begin{thm}\label{t-abund-nonclosed-surf}
Let $(X,B)$ be a projective klt pair of dimension two over a field $k$  where $B$ is a $\Q$-boundary.
Assume $K_X+B$ is nef and that $\kappa(K_X+B)\ge 0$. Then $K_X+B$ is semi-ample.
\end{thm}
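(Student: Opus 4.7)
The plan is to prove semi-ampleness by splitting into three cases based on the numerical dimension $\nu := \nu(K_X+B)\in\{0,1,2\}$ of the nef $\Q$-divisor $K_X+B$.

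If $\nu=0$ then $K_X+B\equiv 0$; since $\kappa(K_X+B)\ge 0$ there is an effective $\Q$-divisor $D\sim_\Q K_X+B$, and intersecting $D$ with an ample divisor forces every coefficient of $D$ to vanish, so $D=0$ as a divisor and $K_X+B\sim_\Q 0$, trivially semi-ample. If $\nu=2$ then $K_X+B$ is big and nef, and I invoke the base-point-free theorem for klt surface pairs over $k$, which I expect to be available among the surface-theoretic tools developed earlier in the paper alongside Theorem \ref{t-lmmp-nonclosed-surf}.

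The substantive case is $\nu=1$, in which $(K_X+B)^2=0$ while $(K_X+B)\cdot H>0$ for any ample $H$. My approach is to contract the $(K_X+B)$-trivial face of $\NEbar(X)$ by a morphism $f\colon X\to C$ onto a smooth projective curve $C/k$, obtaining $K_X+B\equiv f^*L$ for a nef $\Q$-Cartier $L$ on $C$ with $\deg L>0$. The assumption $\kappa(K_X+B)\ge 0$ produces an effective $\Q$-divisor $D\sim_\Q m(K_X+B)$ for some $m>0$; since $D\cdot F=0$ for a general fibre $F$ and $F$ is nef, every component of $D$ must be vertical, hence $D=f^*E$ for an effective $\Q$-divisor $E$ on $C$. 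Passing to the Stein factorization of $f$ we may assume $f_*\mathcal{O}_X=\mathcal{O}_C$, so $f^*\colon\mathrm{Pic}(C)\to\mathrm{Pic}(X)$ is injective and $E\sim_\Q mL$. Therefore $K_X+B\sim_\Q f^*L$ with $L$ an ample $\Q$-Cartier divisor on the curve $C$, and $K_X+B$ is semi-ample.

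The main obstacle will be the $\nu=1$ case: producing the contraction $f\colon X\to C$ over a possibly imperfect nonclosed field $k$ (which requires a surface contraction theorem in that setting, presumably packaged with Theorem \ref{t-lmmp-nonclosed-surf}), and then lifting the numerical equivalence $K_X+B\equiv f^*L$ to a $\Q$-linear equivalence, which relies on geometric connectedness of the generic fibre of $f$. Over imperfect $k$ the usual descent from $\bar k$ can destroy normality and klt singularities, so I expect these ingredients to be established directly within the paper's nonclosed-field surface-LMMP machinery rather than via base change. The base-point-free theorem underlying the $\nu=2$ case is a secondary but nontrivial input requiring comparable care.
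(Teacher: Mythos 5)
Your $\nu=0$ and $\nu=2$ cases are fine (the latter is exactly the paper's Proposition \ref{p-s-ample-big}), but the $\nu=1$ case contains the real gap: you assert without argument that the $(K_X+B)$-trivial face of $\NEbar(X)$ can be contracted by a morphism $f\colon X\to C$ onto a curve with $K_X+B\equiv f^*L$. No cone or contraction theorem produces such a morphism: contraction theorems apply to $(K_X+B)$-negative faces, and Keel-type results require bigness, whereas here $K_X+B$ is nef, non-big, and the trivial face is huge. In fact the existence of such a fibration is essentially equivalent to the conclusion you are trying to prove, since $K_X+B\sim_\Q f^*(\text{ample})$ forces $\kappa=\nu=1$; the whole content of abundance when $\nu=1$ is to rule out $\kappa(K_X+B)=0$, and for a general nef divisor this can genuinely fail (e.g.\ the section $D$ on $\PP(\mathcal{O}_E\oplus P)$ with $P$ non-torsion of degree $0$ on an elliptic curve has $\nu=1$, $\kappa=0$, and is not semi-ample), so some special feature of $K_X+B$ and of the ground field must be used. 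Your proposal has no ingredient playing that role, so the argument is circular at its central step. (If one happens to know $\kappa(K_X+B)=1$, the fibration does exist via the Iitaka map, and the paper's Lemma \ref{l-s-ample-kappa=1} already covers that; the problem is precisely the a priori possibility $\kappa=0$, $\nu=1$.)

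The paper's route is quite different and is designed to close exactly this gap: writing $m(K_X+B)\sim M\ge 0$, it runs an LMMP on $K_X+nM$ (legitimate since this is a multiple of $K_X+\tfrac{nm}{1+nm}B$), checks that $M$ stays nef and is numerically trivial on every contracted curve, and on the resulting model extracts from $M$ a connected divisor $D$ which is an indecomposable curve of canonical type. Klt-ness (coefficients $b<1$) is then used to show $D|_D$ is torsion in $\Pic(D)$, Mumford's analysis of $\Pic$ of indecomposable curves of canonical type upgrades numerical to linear triviality, and Totaro's lemma (in char $p>0$) yields semi-ampleness of $D$, hence $\kappa(M)=1$, after which Lemma \ref{l-s-ample-kappa=1} finishes; the Mori fibre space end-product is handled separately. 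To repair your outline you would need to supply an argument of this kind (or some other mechanism producing the fibration, i.e.\ excluding $\kappa=0$ when $\nu=1$), not merely the formal reduction once a fibration is granted.
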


These results were proved by Tanaka [\ref{ta15-2}] not long ago. Actually he proves more general statements, 
in particular, he proves \ref{t-abund-nonclosed-surf} without the assumption $\kappa(K_X+B)\ge 0$. 
We give a self-contained proof of the above theorems. Our proof of \ref{t-lmmp-nonclosed-surf} is perhaps  
 the same as that in [\ref{ta15-2}] which closely follows Keel's techniques [\ref{Ke99}]. 
However, our proof of \ref{t-abund-nonclosed-surf} seems to be different
from his. He relies on another paper [\ref{ta15-1}] but our proof is short and direct
which follows Mumford's ideas [\ref{Mum}] and uses a result of Totaro [\ref{tot09}]. In fact we worked out 
these proofs before [\ref{ta15-2}] appeared.\\

\subsection*{Relative good minimal models of $3$-folds}
As mentioned earlier our motivation for considering surfaces over nonclosed fields is
to treat $3$-folds over curves.

\begin{thm}\label{t-3d-rel-mmodel}
Let $(X,B)$ be a projective klt pair of dimension $3$  where $B$ is a $\Q$-boundary,
and $f\colon X\to Z$ be a contraction onto a curve,
over $\bar{\mathbb{F}}_p$ with $p>5$. Let $F$ be the generic fibre of $f$.
If $\kappa((K_X+B)|_F)\ge 0$, then $(X,B)$ has a good log minimal model over $Z$.
\end{thm}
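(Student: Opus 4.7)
The plan is to combine the relative $3$-fold LMMP of Hacon--Xu--Birkar (available for $p>5$) with abundance for log surfaces over the nonclosed function field $K(Z)$ (Theorem~\ref{t-abund-nonclosed-surf}).

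\textbf{Step 1: Run the relative LMMP over $Z$.} The hypothesis $\kappa((K_X+B)|_F)\geq 0$ forces $(K_X+B)|_F$, and hence $K_X+B$, to be pseudo-effective over $Z$. So a run of the relative LMMP on $K_X+B$ over $Z$ terminates with a log minimal model $(X',B')/Z$; I replace $(X,B)$ by this model and may assume $K_X+B$ is nef over $Z$. It then suffices to prove $K_X+B$ is semi-ample over $Z$.

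\textbf{Step 2: Abundance on the generic fibre.} Because $X$ is klt of dimension $3$ and $f$ is a contraction onto a smooth curve, the generic fibre $F$ is a regular projective surface over $K(Z)$, and the restricted pair $(F,B_F)$ is klt. Moreover, $K_F+B_F=(K_X+B)|_F$ is nef (restriction of a divisor nef over $Z$) and has $\kappa\geq 0$ (LMMP preserves Kodaira dimension on the generic fibre). Hence Theorem~\ref{t-abund-nonclosed-surf} yields that $K_F+B_F$ is semi-ample on $F$.

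\textbf{Step 3: Lift semi-ampleness from $F$ to $X/Z$.} I split into cases by $\kappa:=\kappa((K_X+B)|_F)$. When $\kappa=0$, semi-ampleness of $K_F+B_F$ means $K_F+B_F\sim_\Q 0$; a vertical-divisor analysis together with nefness over $Z$ and the fact that $\Pic^0$ of a curve over $\bar{\mathbb{F}}_p$ is torsion should give $K_X+B\sim_\Q f^*D$ for a semi-ample $\Q$-divisor $D$ on $Z$, so we are done. When $\kappa\geq 1$, the generic-fibre Iitaka fibration $F\to Y_F$ spreads out to a rational map $X\dashrightarrow Y$ over $Z$; resolving it and descending $K_X+B$ to $Y/Z$ through an argument using Patakfalvi's semi-positivity of $f_\ast(m(K_X+B))$ together with Keel's base-point-free theorem in characteristic $p$ will give the desired relative semi-ampleness of $K_X+B$.

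\textbf{Main obstacle.} The descent from generic-fibre semi-ampleness to semi-ampleness over $Z$ is the hardest step. The difficulty is controlling curves contained in non-generic fibres on which $K_X+B$ has degree zero, for which the generic-fibre analysis says nothing. This is where the hypothesis $k=\bar{\mathbb{F}}_p$ is crucial: it is what makes Keel's char-$p$ techniques applicable, and it reduces semi-ampleness questions on $Z$ to the non-negativity of degrees of certain line bundles on a curve over $\bar{\mathbb{F}}_p$.
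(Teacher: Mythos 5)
Your Steps 1 and 2 coincide with the paper's proof: run the relative LMMP (existence of log minimal models over $Z$ from [\ref{bi13}]) to reduce to $K_X+B$ nef over $Z$, then apply Theorem \ref{t-abund-nonclosed-surf} to the klt pair $(F,B_F)$ over the function field to get semi-ampleness of $K_F+B_F$. (Minor point: $F$ need not be regular, since the singular locus of $X$ can dominate $Z$; what you actually need, and state, is that $(F,B_F)$ is klt.) The genuine gap is in Step 3, which you yourself flag as the main obstacle but do not resolve. The paper's descent mechanism is Lemma \ref{l-linear-pullback} ([\ref{bw14}, Lemma 5.6], the char-$p$ analogue of Kawamata's result): a divisor nef over the base whose restriction to the generic fibre is $\Q$-linearly trivial becomes, after birational modification, $\Q$-linearly the pullback of a divisor on the base. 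This handles $\kappa=0$ directly ($K_X+B\sim_\Q 0/Z$), and in the case $\kappa=1$ it is applied to the fibration $Y\to S$ obtained by spreading out the Iitaka map of $F$, producing $\phi^*(K_X+B)\sim_\Q g^*D$ with $D$ nef and big over $Z$ on the \emph{surface} $S$; semi-ampleness of $D+nh^*H$ then comes from Keel's results over $\bar{\mathbb{F}}_p$, where nef and big on a surface implies semi-ample because numerically trivial divisors on $\mathbb{E}$ are torsion. Your proposal never supplies this descent lemma, and the tool you substitute for it, Patakfalvi's semi-positivity (Theorem \ref{t-Patakfalvi-main}), is not applicable: it concerns $K_{X/Z}$ (no boundary $B$) with strongly $F$-regular general fibres over a curve, and it plays no role in this theorem at all (it enters only later, in the proof of the main theorem).

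A second concrete gap: you treat $\kappa\geq 1$ uniformly, but for $\kappa=2$ the Iitaka fibration of $F$ is birational, so spreading it out reduces nothing; there the statement is exactly the relative base-point-free theorem for klt $3$-folds in char $p>5$ (Theorem \ref{t-mmp-3fold}(3), from [\ref{bi13}][\ref{xu13}][\ref{bw14}]), which the paper cites and which is not recovered by a generic appeal to ``Keel plus semi-positivity'' — Keel's theorem requires knowing $L|_{\mathbb{E}(L)}$ is semi-ample, and when $\mathbb{E}(L)$ is a surface inside the $3$-fold this is not automatic even over $\bar{\mathbb{F}}_p$. So the correct repair of Step 3 is: $\kappa=0$ and $\kappa=1$ via Lemma \ref{l-linear-pullback} (plus Keel over $\bar{\mathbb{F}}_p$ on the surface $S$ in the $\kappa=1$ case), and $\kappa=2$ via the known relative base-point-free theorem; as written, your Step 3 does not constitute a proof of the hardest part of the statement.
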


Actually the proof of the theorem works over any algebraically closed field of char $p>5$
except when  $\kappa((K_X+B)|_F)=1$. In this case we make use of the fact that any nef and big divisor
on a surface over $\bar{\mathbb{F}}_p$ is semi-ample.

As far as Theorem \ref{t-main} is concerned we only need special cases of \ref{t-3d-rel-mmodel} which in turn
only needs special cases of \ref{t-abund-nonclosed-surf}. We only need the case when $B=0$ and $F$ is smooth, or
when $\kappa((K_X+B)|_F)= 0$ but $F$ admits a contraction onto an elliptic curve. See Remark \ref{r-nonclosed-surf}
for some more detailed explanations.\\

\subsection*{Acknowledgements}
The first author was supported by a grant of the Leverhulme Trust.
Part of this work was done when the first author visited Mihai P\u{a}un in KIAS in February 2015.
Part of this work was done when the first author visited National Taiwan University in August-September 2014 with the support of the 
Mathematics Division (Taipei Office) of the National Center for Theoretical Sciences, and the visit was arranged by Jungkai A. Chen. 
He wishes to thank them all for their hospitality. The second author was supported by NSFC (No. 11201454 and No. 11231003). The third author was supported by grant
NSFC (No. 11401358). We would like to thank Burt Totaro for
answering our questions regarding the results in his paper [\ref{tot09}].

%%%%%%%%%%%%%%%%%%%%%%%%%%
\section{Preliminaries}

We follow Koll\'ar [\ref{Kollar-singularities}] to define canonical sheaves and divisors, adjunction,
pairs, singularities, etc which we discuss below.

\subsection{Relative canonical sheaves}
Let $f\colon X\to Z$ be a morphism of schemes where $Z$ is
 regular and excellent, and $X$ is pure dimensional and of finite type over $Z$.
Let $S$ be a closed subscheme of $X$ and $U:=X\setminus S$. Assume that

$\bullet$ codimension of $S$ in $X$ is at least $2$, and 

$\bullet$ $U$ is a locally closed local complete intersection in some $\PP^n_Z$.

Let $\mathcal{I}$ be the ideal sheaf of the closure of $U$ in $\PP^n_Z$ and
let $j\colon U \to X$ be the inclusion map of $U$ in $X$. Now define the relative {canonical sheaf}
as
$$
\omega_{X/Z}=j_*((\omega_{\PP^n_Z/Z}\otimes (\det \mathcal{I}/\mathcal{I}^2)^\vee)|_U)
$$
Note that $\mathcal{I}/\mathcal{I}^2$ is locally free on $U$. Moreover,
$\omega_{\PP^n_Z/Z}$ is as usual defined to be $\mathcal{O}_{\PP^n_Z}(-n-1)$.

\subsection{Relative canonical sheaves and divisors of normal schemes}\label{ss-can-sheaf-div-normal}
Let $f\colon X\to Z$ be a quasi-projective morphism of schemes where $Z$ is
 regular and excellent, and $X$ is integral and normal.
The set of regular points of $X$ is an open subset of $X$ by definition of excellent schemes
(cf. [\ref{bh05}, page 382]).
Let $S$ be any closed subscheme of $X$ containing the singular points and such that
the codimension of $S$ in $X$ is at least $2$. Such an $S$ exists because $X$ is normal.
If we embed $U$ as a locally closed subscheme
into some $\PP^n_Z$, then $U$ is a locally closed local complete intersection because
$U$ is regular (cf. [\ref{bh05}, Proposition 2.2.4]). Therefore we can
define the relative canonical sheaf $\omega_{X/Z}$ as in the previous subsection.
Under our assumptions, this sheaf is of the form $\mathcal{O}_X(K_{X/Z})$
for some divisor $K_{X/Z}$ which we refer to as the {canonical divisor}
of $X$ over $Z$ (when $Z$ is the spectrum of a field, we usually drop $Z$ and just write
$\omega_X$ and $K_X$ if the ground field is obvious from the context).

If $Y\to Z$ is another quasi-projective morphism from a  normal integral scheme $Y$ with 
$K_{Y/Z}$ being $\Q$-Cartier, and if we are given a morphism $h\colon X\to Y/Z$, then we let 
$K_{X/Y}=K_{X/Z}-h^*K_{Y/Z}$. 

Now assume that $Z$ is integral and let $F$ be the generic fibre of $X\to Z$.
Let $V$ and $T$ be the inverse images of $U$ and $S$ under the morphism $F\to X$.
Then by Lemma \ref{l-gen-fib} below, $F$ is normal, $V$ is regular, and the codimension of
$T$ in $F$ is at least $2$. We consider $F$ with its natural scheme structure
over $K$, the function field of $Z$.
By the definition of canonical sheaves,
$\omega_{V}$ is the pullback of $\omega_{U/Z}$. Therefore, $K_{V}$ is the pullback of $K_{U/Z}$.
Moreover, if $K_X+B$ is $\R$-Cartier for some $\R$-divisor $B$, then we can
write $K_{F}+B_F$ for the pullback of $K_X+B$ to $F$ where $B_F$ is canonically
determined by $B$: more precisely, $B_F$ is the closure of the pullback of
$B|_U$ to $V$.

\subsection{Intersection theory}\label{ss-intersection}
For a short introduction to intersection theory on a proper scheme $X$ over a field $k$, see [\ref{KM}, Section 1.5].
Note that intersection numbers depend on the ground field $k$. For a detailed treatment of
intersection theory on regular surfaces, see [\ref{Liu}, Chapter 9]. Although [\ref{Liu}] does not seem to
treat the Riemann-Roch formula but it holds on regular projective surfaces. More precisely, if $X$ is a regular
surface projective over a field $k$ and if $L$ is a Cartier divisor, then
$$
\mathcal{X}(mL)=\frac{1}{2}L\cdot (L-K_X)+\mathcal{X}(0)
$$
where $K_X$ means the relative canonical divisor of $X$ over $k$, and
$\mathcal{X}(N):=h^0(N)-h^1(N)+h^2(N)$ for any divisor (or sheaf) $N$
which also depends on the ground field $k$. The formula can be proved as in the
case of smooth surfaces over algebraically closed fields. The main point is that it can be reduced
to Riemann-Roch on curves which holds in a quite general setting (cf. [\ref{Liu}, Section 7.3]).
See [\ref{ta15-2}, Section 1.3] for a complete proof.

\subsection{Pairs and singularities}\label{d-pairs}
 Let $k$ be a field.
A pair $(X,B)$ over $k$ consists of a
normal quasi-projective variety $X$ over $k$
and an $\R$-Weil divisor $B$ with coefficients in $[0,1]$
such that $K_{X}+B$ is $\R$-Cartier. We usually refer to $B$ as a boundary and when it has rational coefficients
we say it is a $\Q$-boundary. See [\ref{Kollar-singularities}, Definitions 1.5 and 2.8] for definitions 
in more general settings.

For any projective birational morphism $f\colon W\to X$ from a normal variety $W$, we can
write $K_W+B_W=f^*(K_X+B)$ for some unique divisor $B_W$. For a prime divisor $D$ on $W$
we define the log discrepancy $a(D,X,B)$ to be $1-b$ where $b$ is the coefficient of $D$ in $B_W$.
We say $(X,B)$ is lc (resp. klt) if $a(D,X,B)\ge 0$ (resp.  $a(D,X,B)>0$) for any $D$ on any
such $W$. On the other hand, we say $(X,B)$ is dlt if there is a closed subset $Z\subset X$ of
codimension at least two such that $a(D,X,B)>0$ for any $D$ whose image in $X$ is inside $Z$
and such that outside $Z$ we have: $X$ is regular and
$\Supp B$ has simple normal crossing singularities.

We say $f$ is a log resolution if
$W$ is regular and $\Supp B_W$ has simple normal crossing singularities. Log
resolutions exist when $\dim X=2$ [\ref{shaf}] or if $k$ is algebraically closed and $\dim X\le 3$;
in these situations one can check whether $(X,B)$ is lc or klt by looking at
one log resolution. Moreover, if $\dim X=2$, then a minimal resolution of $X$ exists.

\subsection{Minimal models and Mori fibre spaces}\label{d-mmodel}
Let $(X,B)$ be a lc pair and $(Y,B_Y)$ be a $\Q$-factorial dlt pair, over a field $k$, equipped with projective morphisms
$X\to Z$ and $Y\to Z$ and a birational
map $\phi\colon X\bir Y$ commuting with these morphisms such that $\phi_*B=B_Y$ and such that $\phi^{-1}$ does not
contract divisors. Assume in addition that
$$
a(D,X,B)\le a(D,Y,B_Y)
$$
for any prime divisor $D$ on birational models of $X$ with strict inequality if $D$ is on $X$ and exceptional/$Y$.
We say $(Y,B_Y)$ is a log minimal model of $(X,B)$ over $Z$ if
$K_Y+B_Y$ is nef$/Z$. We say $(Y,B_Y)$ is a Mori fibre space of $(X,B)$
over $Z$ if there is a $K_Y+B_Y$-negative extremal contraction $Y\to T/Z$ with
$\dim Y>\dim T$.

\subsection{Minimal models of 3-folds}
For $3$-folds we have the following result.

\begin{thm}\label{t-mmp-3fold}
Let $(X,B)$ be a projective klt pair of dimension $3$ and $X\to Z$ a contraction, over an algebraically closed field
$k$ of char $p>5$.

$(1)$ If $K_X+B$ is pseudo-effective over $Z$, then $(X,B)$ has a log minimal model over $Z$;

$(2)$ If $K_X+B$ is not pseudo-effective over $Z$, then $(X,B)$ has a Mori fibre space over $Z$;

$(2)$ If $K_X+B$ is nef over $Z$, and $K_X+B$ or $B$ is big over $Z$, then $K_X+B$ is semi-ample over $Z$.
\end{thm}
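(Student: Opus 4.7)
The plan is to assemble the recent MMP machinery for $3$-folds in characteristic $p>5$ due to Hacon-Xu [\ref{hx13}], Birkar [\ref{bi13}], and Xu [\ref{xu13}]. For parts (1) and (2), I would run the $(K_X+B)$-MMP over $Z$ with scaling of a relatively ample $\Q$-divisor $H$, after first taking a $\Q$-factorialization of $X$ if needed. Each step is either a divisorial contraction, which is unproblematic, or a flip whose existence for klt $3$-folds in this characteristic is provided by [\ref{hx13}]. Termination of the MMP with scaling in the pseudo-effective case is established in [\ref{bi13}] and yields a log minimal model, proving (1). In the non-pseudo-effective case the same MMP cannot end with a minimal model, so the last extremal ray that is contracted must be of fibre type, yielding (2).

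For (3), I would invoke the base-point-free theorem for klt $3$-folds in char $p>5$ from [\ref{xu13}] and [\ref{bi13}]. If $K_X+B$ itself is nef and big over $Z$, semi-ampleness is immediate. If instead only $B$ is big over $Z$, I would perturb in the usual Kawamata fashion: write $B\sim_\Q \epsilon A+B'$ with $A$ ample$/Z$, $B'\ge 0$, and $\epsilon>0$ small enough that $(X,B')$ remains klt. Then for any integer $a\ge 2$,
$$
a(K_X+B)-(K_X+B')\sim_\Q (a-1)(K_X+B)+\epsilon A
$$
is nef (a sum of a nef and an ample) and big (because of the $\epsilon A$ summand), so the base-point-free theorem applied to $(X,B')$ with the nef divisor $D:=K_X+B$ shows that $K_X+B$ is semi-ample over $Z$.

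The main obstacle is not producing new birational geometry but rather matching hypotheses: verifying that the statements in [\ref{hx13}], [\ref{bi13}], [\ref{xu13}] are formulated with sufficient relative generality to cover an arbitrary contraction $X\to Z$, with $\Q$-boundary $B$, and that termination of the MMP with scaling there does not implicitly require bigness or $\Q$-factoriality conditions incompatible with our setup. A secondary technical point in (2) is to check that non-pseudo-effectiveness of $K_X+B$ over $Z$ is preserved by each MMP step, so that the process really does terminate with a Mori fibre contraction rather than stalling along the way.
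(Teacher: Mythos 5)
Your overall strategy---assembling the known char $p>5$ three-fold MMP results---is exactly what the paper does: its proof of Theorem \ref{t-mmp-3fold} is nothing more than a three-line attribution to the literature. But your attributions leave genuine gaps in parts (2) and (3). For (2), existence of a Mori fibre space is not a formal consequence of [\ref{hx13}] plus [\ref{bi13}]: [\ref{bi13}] provides flips, minimal models and termination with scaling in the pseudo-effective case, and neither paper provides the cone/contraction theorem needed to actually contract a fibre-type extremal ray in positive characteristic (base point freeness of a supporting nef divisor is a serious issue there, not a formality). Both the termination of the scaled LMMP when $K_X+B$ is not pseudo-effective and the contraction of the final fibre-type ray are precisely the content of [\ref{CTX}] (terminal case) and [\ref{bw14}] (general klt case), which is what the paper cites for (2) and which you never invoke; what you call a ``secondary technical point'' is in effect the main theorem of a separate paper.

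For (3), your Kawamata-style perturbation reduces the case ``$B$ big over $Z$'' to a base point free statement for $D=K_X+B$ that is merely nef (possibly not big) over $Z$, with $aD-(K_X+B')$ nef and big. The base point free theorems of [\ref{xu13}] and [\ref{bi13}] assume the nef divisor itself is big, exactly because the Keel-type semi-ampleness techniques underlying them need bigness; so they do not cover this case, and your perturbation does not create the missing bigness of $D$. The statement you actually need---$K_X+B$ nef over $Z$ and $B$ big over $Z$ implies $K_X+B$ semi-ample over $Z$---is again one of the results of [\ref{bw14}], which is why the paper cites [\ref{bi13}][\ref{xu13}][\ref{bw14}] jointly for part (3). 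So the outline is the right one, but as written it attributes to [\ref{hx13}][\ref{bi13}][\ref{xu13}] results those papers do not contain; the missing ingredient in both places is [\ref{bw14}] (or [\ref{CTX}] in the terminal case of (2)).
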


Parts (1) is proved in [\ref{hx13}] for canonical singularities, and in [\ref{bi13}] in general.
Parts (2) is proved in [\ref{CTX}] for terminal singularities, and in [\ref{bw14}] in general.
Part (3) is proved in various forms in [\ref{bi13}][\ref{xu13}][\ref{bw14}].

\subsection{Adjunction}\label{ss-adjunction}
Let $X$ be a normal projective variety over a field $k$. Let $B\ge 0$ be a
$\Q$-divisor on $X$ such that $K_X+B$ is $\Q$-Cartier.
Let $S$ be a component of $\rddown{B}$. Then we can write the pullback of $K_X+B$ to the normalization
${S^\nu}$ as $K_{S^\nu}+B_{S^\nu}$ where the {different} $B_{S^\nu}\ge 0$ is
canonically determined. If $(X,B)$ is lc outside a codimension $\ge 3$ subset of
$X$, then $B_{S^\nu}$ is a boundary. See [\ref{Kollar-singularities}, Proposition 4.5]
for more details.

\subsection{Varieties over $\bar{\mathbb{F}}_p$}
Varieties over finite fields enjoy some special properties which we will exploit.
For example, any numerically trivial divisor on a projective variety over $\bar{\mathbb{F}}_p$ is
torsion [\ref{Ke99}]. Another example is this:

\begin{thm}[{[\ref{ta12}, Theorem 0.2]}]\label{abd-dim2}
Let $X$ be a normal projective
surface over $\bar{\mathbb{F}}_p$.
Let $\Delta$ be an effective $\mathbb{Q}$-divisor on $X$.
If $K_X +\Delta$ is nef, then $K_X +\Delta$ is semi-ample.
\end{thm}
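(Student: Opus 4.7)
The plan is to split into cases according to the numerical dimension $\nu:=\nu(K_X+\Delta)\in\{0,1,2\}$, reducing each case to the $\bar{\mathbb{F}}_p$-specific fact that numerically trivial line bundles on proper schemes are torsion [\ref{Ke99}].

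If $\nu=0$, then $K_X+\Delta\equiv 0$, hence torsion and thus $\Q$-linearly trivial, which is semi-ample. If $\nu=2$, then $K_X+\Delta$ is nef and big; fix $m\ge 1$ with $L:=m(K_X+\Delta)$ an integral Cartier divisor and apply Keel's semi-ampleness criterion [\ref{Ke99}]: it suffices to show $L|_{E(L)}$ is semi-ample, where the exceptional locus $E(L)$ is the union of all closed subvarieties $V\subseteq X$ with $L^{\dim V}\cdot V=0$. Since $L$ is big, $E(L)$ is a proper subscheme of $X$, hence at most one-dimensional; each irreducible curve $C\subseteq E(L)$ satisfies $L\cdot C=0$, so $L|_C$ is numerically trivial on a curve over $\bar{\mathbb{F}}_p$, hence torsion. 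Thus $L|_{E(L)}$ is semi-ample and Keel's theorem concludes.

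The main case is $\nu=1$, where $(K_X+\Delta)^2=0$ but $K_X+\Delta\not\equiv 0$. First we show $\kappa(K_X+\Delta)\ge 0$ via Riemann--Roch on a suitable regular birational model. Using $(K_X+\Delta)^2=0$ and the identity $(K_X+\Delta)\cdot K_X=-(K_X+\Delta)\cdot\Delta$, one computes
$$
\chi(m(K_X+\Delta))=\tfrac{m}{2}(K_X+\Delta)\cdot\Delta+\chi(\mathcal{O}_X).
$$
Serre duality gives $h^2(m(K_X+\Delta))=h^0(K_X-m(K_X+\Delta))=0$ for $m\gg 0$, because $K_X-m(K_X+\Delta)=-(m-1)(K_X+\Delta)-\Delta$ is anti-nef and not numerically trivial; hence $h^0(m(K_X+\Delta))\ge\chi$, which tends to $+\infty$ with $m$ whenever $(K_X+\Delta)\cdot\Delta>0$. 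In the delicate sub-case $(K_X+\Delta)\cdot\Delta=0$, Euler characteristic is constant and sections must be extracted by Mumford's Picard-scheme argument [\ref{Mum}] combined with Totaro's $\bar{\mathbb{F}}_p$-specific input [\ref{tot09}].

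Once $\kappa\ge 0$, take the Iitaka fibration $\phi:X'\to C$ onto a smooth curve, after passing to a birational modification $X'\to X$. The restriction of $K_X+\Delta$ to a general fibre $F$ of $\phi$ is numerically trivial on the curve $F/\bar{\mathbb{F}}_p$, hence torsion; a standard bookkeeping (using that vertical divisors with numerically trivial restriction to a curve are pulled back) then yields $K_X+\Delta\equiv\phi^*D$ for some nef $\Q$-divisor $D$ on $C$. Semi-ampleness of $D$ on $C$ over $\bar{\mathbb{F}}_p$ follows from Riemann--Roch on curves together with the torsion of numerically trivial line bundles; pulling back yields the theorem. The main obstacle is the sub-case $\nu=1$ with $(K_X+\Delta)\cdot\Delta=0$, where Riemann--Roch is silent and sections must be produced by Mumford's Picard decomposition and Totaro's [\ref{tot09}] finiteness input; this is precisely where the $\bar{\mathbb{F}}_p$ hypothesis is genuinely required, since over other fields the same conclusion would demand substantially more work.
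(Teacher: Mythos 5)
First, note that the paper does not prove this statement at all: it is quoted from Tanaka [\ref{ta12}], and the paper's own self-contained abundance result, Theorem \ref{t-abund-nonclosed-surf}, deliberately assumes $\kappa(K_X+B)\ge 0$ precisely to sidestep the point where your proposal breaks down. Your cases $\nu=0$ and $\nu=2$ are fine and standard (numerically trivial implies torsion over $\bar{\mathbb{F}}_p$; nef and big implies semi-ample by Keel, as the paper itself uses), and in the case $\nu=1$ your Riemann--Roch computation does give $\kappa\ge 1$ when $(K_X+\Delta)\cdot\Delta>0$ (your claim that $K_X-m(K_X+\Delta)$ is ``anti-nef'' is not literally correct, since $-\Delta$ can be positive on curves inside $\Supp\Delta$, but $h^2=0$ still follows by intersecting with an ample divisor). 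The genuine gap is the remaining sub-case $\nu=1$, $(K_X+\Delta)\cdot\Delta=0$, which you dispose of in one sentence by saying sections ``must be extracted by Mumford's Picard-scheme argument combined with Totaro's input.'' That sentence is the theorem. Mumford's argument in [\ref{Mum}] concerns $K_X$ itself and runs through the classification of surfaces (Albanese map, case analysis on $b_1$ and $\chi(\mathcal{O}_X)$); it does not apply verbatim to $K_X+\Delta$ for an arbitrary effective $\Delta$ (note the statement imposes no lc or klt condition), and adapting it is exactly the content of Tanaka's non-vanishing, which neither you nor this paper reprove.

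There is a second gap even if non-vanishing is granted: from $\kappa\ge 0$ you pass immediately to ``the Iitaka fibration $\phi\colon X'\to C$ onto a smooth curve,'' which silently assumes $\kappa=1$. With $\nu=1$ one may a priori have $\kappa=0$, i.e.\ a single effective $M\sim m(K_X+\Delta)$ and nothing more, and proving $\kappa=\nu$ in that situation is again abundance. This is where the paper's proof of Theorem \ref{t-abund-nonclosed-surf} does its real work: run an LMMP along which $M$ stays nef and numerically trivial on contracted curves, reduce to an indecomposable curve of canonical type $D$ with $D|_D$ torsion, and apply Totaro's Lemma 4.1 via Proposition \ref{p-s-ample-can-type} to produce the second section and conclude $\kappa(M)=1$, after which Lemma \ref{l-s-ample-kappa=1} finishes. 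If you wish to invoke [\ref{Mum}] and [\ref{tot09}], that curve-of-canonical-type analysis (or Tanaka's argument) must actually be carried out; once $\kappa=1$ is in hand, your fibration bookkeeping is unnecessary anyway, since Lemma \ref{l-s-ample-kappa=1} already gives semi-ampleness over any field.
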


\subsection{Semi-positivity of direct images of pluri-canonical sheaves}

The following result is extracted from [\ref{pa12}, 1.5, 1.6, 1.7 and the paragraph below 1.7].
It holds in a more general form but this is all we need in this paper.

\begin{thm}[{[\ref{pa12}]}] \label{t-Patakfalvi-main}
Let $f\colon X\rightarrow Z$ be a surjective morphism from a normal projective variety
to a smooth projective curve over an algebraically closed field $k$.
Assume $K_X$ is $\Q$-Cartier and that general fibers are strongly $F$-regular.

$\bullet$ If $K_X$ is nef over $Z$ and $K_X$ is semi-ample on the generic fibre of $f$,
then $K_{X/Z}$ is nef.

$\bullet $If $K_{X}$ is ample over $Z$, then $f_*\mathcal{O}_X(mK_{X/Z})$ is a nef vector bundle for any sufficiently
divisible natural number $m$.
\end{thm}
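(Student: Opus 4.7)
The plan is to follow Patakfalvi's strategy, which replaces the variation-of-Hodge-structure machinery used in characteristic zero by Frobenius-trace techniques, with strong $F$-regularity of general fibres serving as the positive-characteristic analogue of the klt hypothesis. Throughout, the key identity is $K_X = K_{X/Z} + f^*K_Z$, and the key mechanism is the relative Frobenius $F_{X/Z}$ together with its trace map on pluri-relative-canonical sheaves, which is well-behaved precisely because $F$-regularity implies splittings of iterated Frobenius.

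For the first bullet, I would argue by contradiction. Suppose $K_{X/Z}\cdot C < 0$ for some curve $C\subset X$. Since $K_X$ is nef over $Z$, pulling back $K_Z$ is non-negative on any vertical curve, so $C$ must map dominantly to $Z$, and we may assume after normalisation that $C\to Z$ is finite and surjective. Base-changing $f$ along $C\to Z$, strong $F$-regularity of general fibres is preserved (for sufficiently general $C$), and semi-ampleness on the generic fibre persists. The plan is to use the Frobenius trace $F_*^e\mathcal{O}_X(p^emK_{X/Z})\to \mathcal{O}_X(mK_{X/Z})$ together with the relative ample model provided by semi-ampleness on $F$ to produce enough sections of $\mathcal{O}_X(mK_{X/Z})$ that generate on a neighbourhood of the generic point of $C$. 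Strong $F$-regularity guarantees that these global sections can be lifted over a Zariski-open subset of $Z$, so in particular $mK_{X/Z}|_C$ has a non-zero section, contradicting $\deg(mK_{X/Z}|_C)<0$.

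For the second bullet, I would first establish that $E:=f_*\mathcal{O}_X(mK_{X/Z})$ is locally free for $m$ sufficiently divisible: strong $F$-regularity on general fibres together with relative ampleness of $K_X$ gives vanishing of $R^if_*$ for $i>0$ by a Serre-vanishing-plus-Frobenius argument, hence flatness of $E$, and base change identifies fibres with $H^0(X_z,mK_{X_z})$, which is locally constant in rank for $m$ large. Nefness then reduces, by pulling back along smooth curves $C\to Z$, to showing that every quotient line bundle of $E|_C$ has non-negative degree. Here ampleness of $K_X$ over $Z$ supplies enough global sections on the total space, and strong $F$-regularity yields surjectivity of iterated trace maps $F_*^e E \to E$; these together prevent any destabilising quotient from having negative degree, since a quotient with $\deg < 0$ would force all Frobenius pullbacks to still have negative degree while simultaneously receiving surjections from a nef source.

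The main obstacle in both parts is the interaction of strong $F$-regularity with base change and with the Frobenius trace. In characteristic zero one gets compatibilities essentially for free from flatness and Hodge theory; here one must verify that the trace maps assemble into globally defined maps of coherent sheaves on $Z$ that behave well under restriction to curves, and that the open locus over which fibres are strongly $F$-regular is large enough to draw global conclusions. This is the technical heart of \cite{pa12}, and any honest proof must address it carefully rather than treat the Frobenius trace as a black box.
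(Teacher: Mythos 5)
The paper itself gives no proof of this statement: it is quoted (in weakened form) from Patakfalvi's paper [\ref{pa12}, 1.5, 1.6, 1.7 and the paragraph below 1.7], so the benchmark is Patakfalvi's argument. Your sketch correctly names the ingredients of that argument --- iterated Frobenius trace maps on relative pluricanonical sheaves, strong $F$-regularity of general fibres as the positivity input, and testing nefness via quotients after base change to curves --- but at the level written it is an outline, and the two places where the mathematical content must appear are exactly the places where you assert rather than argue.

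For the first bullet, the step ``the Frobenius trace together with semi-ampleness on $F$ produces sections of $\mathcal{O}_X(mK_{X/Z})$ generating near the generic point of $C$, and strong $F$-regularity guarantees these lift over a Zariski-open subset of $Z$'' has no justification and is essentially the whole theorem: a priori $f_*\mathcal{O}_X(mK_{X/Z})$ can have degree $0$ and no global sections at all, and the trace map $F^e_*\mathcal{O}_X(p^emK_{X/Z})\to \mathcal{O}_X(mK_{X/Z})$ produces nothing without a supply of sections upstairs, which is what needs to be constructed. In [\ref{pa12}] the nefness of $K_{X/Z}$ is not obtained by an independent curve-by-curve contradiction; it is deduced from the semipositivity of the pushforward sheaves (the second-bullet type statement) combined with the semi-ampleness hypothesis on the generic fibre, via evaluation maps. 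For the second bullet, your nefness argument is circular: the ``nef source'' that is supposed to surject onto a putative negative quotient is the very bundle (or its Frobenius pullbacks) whose nefness is being proved. The actual mechanism is a uniform global generation statement: the images of the iterated trace maps inside $f_*\mathcal{O}_X(mK_{X/Z})$ stabilize for large $e$, and after twisting by a single ample line bundle $A$ on $Z$ whose degree is bounded in terms of $g(Z)$ only, these trace-image subsheaves become globally generated for all relevant $m$, compatibly with the finite and Frobenius base changes used to test nefness; one also needs to identify the trace-image subsheaf with the full pushforward under the relative ampleness hypothesis. This stabilization-plus-vanishing step, which your final paragraph explicitly defers as ``the technical heart,'' is precisely what is missing, so the proposal as it stands does not establish the theorem.
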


We will apply the theorem only when $X$ is a $3$-fold and general fibres have canonical singularities.

\subsection{Varieties with elliptic fibrations}
For fibrations whose general fibres are elliptic curves, we can use a weak canonical bundle formula
which allows us to do induction.

\begin{thm}\label{eft}
Let $f\colon X \rightarrow Z$ be a contraction between smooth projective varieties
over an algebraically closed field $k$ such that the general fibres are
elliptic curves. Then $\kappa(K_{X/Z})\ge 0$.
\end{thm}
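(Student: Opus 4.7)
The plan is to reduce to a relative minimal model, invoke a canonical bundle formula for elliptic fibrations, and conclude from an explicit analysis of the ``Hodge/modular'' part of $K_{X/Z}$.

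First, I would run a relative MMP on $K_X$ over $Z$ to reach a relatively minimal model $\pi\colon X'\to Z$ on which $K_{X'/Z}$ is $\pi$-nef; since pluricanonical rings are preserved under MMP steps, $\kappa(K_{X'/Z})=\kappa(K_{X/Z})$. The generic fibre being an elliptic curve forces $K_{X'/Z}\cdot F=0$, so $K_{X'/Z}$ is $\pi$-semi-ample up to vertical contributions, and an appropriate canonical bundle formula for elliptic fibrations yields
$$
m\,K_{X'/Z}\;\sim_\Q\;\pi^* L_m + E_m,
$$
where $L_m$ is a $\Q$-divisor class on $Z$ (essentially $(\pi_*\omega_{X'/Z}^{\otimes m})^{**}$) and $E_m\ge 0$ is an effective vertical $\Q$-divisor supported on the singular fibres of $\pi$. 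Hence it suffices to prove $\kappa(L_m)\ge 0$ for some $m>0$.

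Next I would analyze $L_m$ for $m=12$, where the modular interpretation is cleanest: up to an effective vertical contribution, $12L$ is the pullback under the rational $j$-map $Z\dashrightarrow\PP^1=\overline{M}_{1,1}$ of $\mathcal O_{\PP^1}(1)$. If $f$ is non-isotrivial then $j$ is dominant and $12L$ inherits a non-zero global section from $\PP^1$, so $\kappa(L)\ge 0$. If $f$ is isotrivial then all smooth fibres are isomorphic to a fixed elliptic curve and $L$ is a numerically trivial line bundle determined by the monodromy representation; over $\bar{\mathbb F}_p$ every numerically trivial line bundle is torsion [\ref{Ke99}], so $\kappa(L)=0$, and a parallel classical argument gives $\kappa(L)\ge 0$ in characteristic zero. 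In either case $\kappa(L_m)\ge 0$ and the theorem follows.

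The main obstacle is making the canonical bundle formula rigorous in positive characteristic and extending from $\dim Z=1$ (where the $j$-map interpretation is most transparent) to arbitrary $\dim Z$. The former is helped by the assumption that general fibres are genuinely elliptic, which rules out the quasi-elliptic pathology; the latter can be handled by restricting to very general complete intersection curves $C\subset Z$ to reduce to the curve-base case, and lifting the conclusion back to $Z$ via a semi-positivity argument for the direct image $\pi_*\omega_{X'/Z}^{\otimes m}$ in the spirit of Patakfalvi [\ref{pa12}].
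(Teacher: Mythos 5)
The paper itself does not reprove this statement: its proof is a one-line citation to [\ref{cz13}, 3.2], so your sketch is in effect an attempt to reprove Chen--Zhang's result, and as written it has genuine gaps. The most serious one is your reduction from an arbitrary-dimensional base to a curve. The conclusion $\kappa(K_{X/Z})\ge 0$ asks for a nonzero global section of some $mK_{X/Z}$ over all of $Z$, and this is not detected by restricting to very general complete-intersection curves $C\subset Z$: effectivity is not a numerical property, and the semi-positivity results of [\ref{pa12}] that you propose to use for ``lifting back'' give nefness of $\pi_*\omega_{X'/Z}^{\otimes m}$, which is strictly weaker than the existence of a section. Indeed, elsewhere in this paper (Propositions \ref{gk} and \ref{k0}) passing from nefness of $f_*\omega_{X/Z}^m$ to effectivity requires extra input (positive degree plus Riemann--Roch, or \'etale covers and the fact that numerically trivial bundles over $\bar{\mathbb{F}}_p$ are torsion); nothing in your outline supplies the analogous input over a higher-dimensional $Z$.

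There are further gaps even over a curve base. (i) The first step, a relative MMP on $K_X$ over $Z$, is not available for $X$ of arbitrary dimension over an arbitrary algebraically closed field (the statement is not restricted to $3$-folds in char $p>5$); for elliptic fibrations one must instead work with a relatively minimal model in the classical sense, which is only standard when $X$ is a surface. (ii) The canonical bundle formula $mK_{X'/Z}\sim_\Q \pi^*L_m+E_m$ with $E_m\ge 0$ and with $12L$ comparable, up to effective divisors, to $j^*\mathcal{O}_{\PP^1}(1)$ is exactly the hard content in characteristic $p$ (wild fibres in characteristics $2$ and $3$, non-reduced fibres, and the comparison of the moduli part with the $j$-map all need proof); you explicitly defer this rather than prove it, but it is the heart of the matter. (iii) In the isotrivial case you rely on ``numerically trivial implies torsion over $\bar{\mathbb{F}}_p$'' together with a characteristic-zero monodromy argument, yet the theorem is stated over an arbitrary algebraically closed field, so for instance $k=\overline{\mathbb{F}_p(t)}$ is not covered; even granting that the class of $L$ is numerically trivial, concluding $\kappa(L)\ge 0$ requires showing $L$ is genuinely torsion (e.g.\ via a finite cover trivializing the family and a norm argument), which is missing. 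In short, the strategy is plausible in outline, but every step where the real difficulty lies is either deferred or would fail as stated, whereas the paper sidesteps all of this by quoting [\ref{cz13}, 3.2].
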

\begin{proof}
This follows from [\ref{cz13}, 3.2].\\
\end{proof}

\subsection{Nef divisors with Kodaira dimension one}

\begin{lem}\label{l-s-ample-kappa=1}
Let $X$ be a normal surface projective over a field $k$. Let $L$ be a nef $\Q$-divisor
with $\kappa(L)=1$. Then $L$ is semi-ample.
\end{lem}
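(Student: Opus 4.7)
The plan is to pass to the minimal resolution $\pi\colon \tilde{X}\to X$ so we may assume $X$ is regular; nef-ness of $L$ pulls back, $\kappa$ is unchanged, and since $\pi_*\mathcal{O}_{\tilde{X}}=\mathcal{O}_X$ semi-ampleness of $\pi^*L$ implies that of $L$. On the regular surface I would first show $L^2=0$: because $L$ is nef but not numerically trivial, $h^2(mL)=h^0(K_X-mL)=0$ for $m\gg 0$, and the Riemann--Roch formula recorded in \ref{ss-intersection} then gives $h^0(mL)\ge \tfrac{1}{2}m^2L^2+O(m)$, which would force $\kappa(L)=2$ if $L^2>0$. Hence $L^2=0$.

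Replacing $L$ by a sufficiently divisible multiple, $\Phi_{|L|}$ has one-dimensional image. Choose a birational morphism $\rho\colon Y\to X$ from a regular projective surface resolving $\Bs|L|$ and write $\rho^*L\sim M+E$ with $|M|$ base-point free and $E\ge 0$ the fixed part; Stein-factorize the morphism $Y\to\PP^N$ defined by $|M|$ as $Y\xrightarrow{g} C\to\PP^N$, so that $C$ is a regular projective curve over $k$, $g$ has connected fibres, and $M\sim g^*A$ for some $A$ on $C$ with $\deg A>0$. Combining $(\rho^*L)^2=L^2=0$ with $(g^*A)^2=0$ yields $2\,g^*A\cdot E+E^2=0$, while the nefness of $\rho^*L$ gives $0\le \rho^*L\cdot E=g^*A\cdot E+E^2$. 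Subtracting, $g^*A\cdot E\le 0$, i.e.\ $\deg A\cdot(F_\eta\cdot E)\le 0$ for a general fibre $F_\eta$ of $g$; since $F_\eta\cdot E\ge 0$ (the general fibre meets $E$ properly), we conclude $F_\eta\cdot E=0$. Hence $E$ is vertical over $C$ and $E^2=0$.

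Zariski's lemma applied to the contraction $g$ from a regular surface to a regular curve now says an effective vertical divisor with zero self-intersection is a non-negative $\Q$-combination of full fibres; thus $E=g^*D'$ for some $\Q$-divisor $D'\ge 0$ on $C$, and $\rho^*L\sim_\Q g^*(A+D')$. Clearing denominators and enlarging, we obtain $N>0$ such that $\rho^*(NL)\sim g^*B$ for an integral divisor $B$ on $C$ of positive degree with $|B|$ base-point free on the curve. Then $|\rho^*(NL)|$ is base-point free on $Y$, and since $\rho_*\mathcal{O}_Y=\mathcal{O}_X$ the same holds for $|NL|$ on $X$, showing that $L$ is semi-ample. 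The main obstacle, given that $k$ need not be algebraically closed, is to ensure that Riemann--Roch, Stein factorization producing a regular projective curve, and Zariski's lemma all function in this setting; each of these holds for regular projective surfaces with appropriate care about the ground field as reviewed in \ref{ss-intersection}.
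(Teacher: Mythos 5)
Your argument is correct, and it reaches the same endpoint as the paper (namely that $L$ is, up to $\Q$-linear equivalence, the pullback of an ample $\Q$-divisor from the curve underlying the map given by $|mL|$), but the machinery in the middle is genuinely different. The paper does not pass to a resolution or compute $L^2$: it replaces $X$ by the normalization of the graph of $\Phi_{|mL|}$, chooses an effective representative with $L\ge H\ge 0$ for $H$ the pullback of an ample divisor, observes that non-bigness forces the support of this representative to be vertical, and then shows by a direct nefness/connectedness argument (taking the smallest $a$ with $L-aF\le 0$ near a fibre $F$ and intersecting with components of $F$) that $L$ equals a multiple of each full fibre it meets. Your route instead establishes $L^2=0$ via Riemann--Roch on a resolution, splits $\rho^*L\sim M+E$ into a free part pulled back from the Stein factorization and a fixed part, and then uses the numerical identities plus Zariski's lemma to see that $E$ is a $\Q$-combination of full fibres. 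What the paper's argument buys is economy: no Riemann--Roch, no Stein factorization, no Zariski's lemma, and no need to leave the normal surface except for the graph normalization; what yours buys is that each step is a quotable standard statement (elimination of indeterminacy, Stein factorization, Zariski's lemma as in Liu's book, Riemann--Roch as recalled in \ref{ss-intersection}), all of which do hold for regular projective surfaces over an arbitrary field, so the proof is easy to audit. Two small points you should make explicit: the vanishing $h^0(K_X-mL)=0$ for $m\gg 0$ follows because some multiple of $L$ is effective and nonzero (so $L\cdot H>0$ for $H$ ample), and the implication ``$h^0(mL)$ grows quadratically $\Rightarrow\kappa(L)=2$'' is cleanest via Kodaira's lemma ($h^0(mL-H)>0$ for large $m$, hence $mL\sim H+G$ with $G\ge 0$); both are routine and do not affect correctness.
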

\begin{proof}
Let $X\bir Z$ be the rational map defined by the linear system $|mL|$ for some
sufficiently divisible $m>0$. Then $\dim Z=1$.
We can replace $X$ with the normalization of the graph of $X\bir Z$ hence assume
$X\bir Z$ is a morphism. We can in addition assume $L\ge H\ge 0$ where
$H$ is the pullbck of some ample $\Q$-divisor on $Z$. Since $L$ is not
big, its support does not intersect the generic fibre of $X\to Z$.

Let $F$ be a fibre of $X\to Z$ which has a common component with $L$.
Let $a$ be the smallest rational number such that $L-aF\le 0$ near $F$. Then $L-aF$ has no common
component with $F$ otherwise there would be two components $C,D$ of $F$ such that
$C$ intersects $D$, $C$ is not a component of $L-aF$ but $D$ is a component of $L-aF$ which
implies $(L-aF)\cdot C<0$, a contradiction. These arguments show that $L$ is the pullback of
some $\Q$-divisor on $Z$ which is necessarily ample, hence $L$ is semi-ample.\\
\end{proof}

\subsection{Generically trivial divisors}
We recall a result of Kawamata adapted to char $p>0$.

\begin{lem}[{[\ref{bw14}]}]\label{l-linear-pullback}
Let $f\colon X\to Z$ be a contraction between normal projective varieties over
an algebraically closed field $k$
 and $L$ a nef$/Z$ $\Q$-divisor on $X$ such that $L|_F\sim_\Q 0$ where $F$ is the generic
fibre of $f$. Assume $\dim Z\le 3$ if $k$ has char $p>0$. Then there exist a diagram
$$
\xymatrix{
X'\ar[r]^\phi\ar[d]^{f'} & X\ar[d]^f\\
Z'\ar[r]^\psi & Z
}
$$
with $\phi,\psi$ projective birational, and an $\R$-Cartier divisor $D$ on $Z'$ such that
$\phi^* L\sim_\Q f'^*D$.
Moreover, if $Z$ is $\Q$-factorial, then we can take $X'=X$ and $Z'=Z$.
\end{lem}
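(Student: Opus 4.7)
The plan is to follow Kawamata's classical argument: first reduce $L$ to a vertical divisor using the hypothesis $L|_F\sim_\Q 0$, then show that a vertical nef-over-$Z$ divisor is $\Q$-linearly a pullback, after suitable birational modification.

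\emph{Reductions and elimination of horizontal content.} Since $\dim Z\le 3$, resolution of singularities is available for $Z$ in positive characteristic, so I would take a resolution $\psi\colon Z'\to Z$ with $Z'$ smooth and let $X'$ be a normalization of the main component of $X\times_Z Z'$ (possibly followed by a further birational modification), producing $\phi\colon X'\to X$ projective birational and a contraction $f'\colon X'\to Z'$. Replacing $L$ by $\phi^*L$ preserves both hypotheses. Since $L|_F\sim_\Q 0$, pick $n>0$ and $g\in K(X')$ with $n\phi^*L|_{F'}=\divi_{F'}(g)$; then $n\phi^*L-\divi_{X'}(g)$ has no component dominating $Z'$, so
\[
\phi^*L\sim_\Q E,\qquad E:=\tfrac{1}{n}\bigl(n\phi^*L-\divi_{X'}(g)\bigr),
\]
where $E$ is $\Q$-Cartier, vertical over $Z'$, and nef over $Z'$ because $E\sim_\Q\phi^*L$. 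The task now reduces to producing $D$ on $Z'$ with $E\sim_\Q f'^*D$.

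\emph{Vertical nef divisors are pullbacks.} By further blowing up $Z'$ (again using $\dim Z\le 3$) and base-changing, I may assume each prime component of $E$ maps onto a prime divisor on $Z'$, and decompose $E=\sum_jE_j$ with $E_j$ supported in $f'^{-1}(B_j)$ for distinct prime divisors $B_j\subset Z'$. It suffices to find $c_j\in\Q$ with $E_j\sim_\Q c_j f'^*B_j$, since then $D:=\sum_jc_jB_j$ works. This proportionality is the main obstacle. My approach is to cut $X'$ by a generic complete intersection of very ample divisors to reduce to a smooth surface $S$ fibred over a curve, where Zariski's lemma applies: the intersection form on vertical components of any fibre is negative semidefinite with radical generated by the full fibre class, and combining this with the nef/$Z'$ condition on $E$ and with $E_j\cdot C=0$ for curves $C$ contained in fibres away from $B_j$ (because $E_j$ meets only fibres over $B_j$) forces $E_j|_S$ to be a rational multiple of the scheme-theoretic fibre. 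Varying the cuts to sweep out every component of each fibre globalises the proportionality.

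Finally, when $Z$ is already $\Q$-factorial, every prime Weil divisor on $Z$ is automatically $\Q$-Cartier, so the blowups of $Z$ used above are unnecessary and the whole argument can be carried out directly on $X$ and $Z$, yielding $X'=X$, $Z'=Z$ as in the ``moreover'' clause.
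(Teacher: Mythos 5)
The paper gives no proof of this lemma at all -- it simply refers to [\ref{bw14}, Lemma 5.6] -- so your argument has to stand on its own. Its skeleton (trade $\phi^*L$ for a $\Q$-linearly equivalent vertical divisor $E$, then prove proportionality of $E$ with $f'^*B_j$ over each codimension-one point of $Z'$ by a Zariski-lemma argument on surfaces obtained by cutting with ample divisors) is the standard Kawamata-style route and is sound in outline. The first soft spot is the reduction ``each prime component of $E$ maps onto a prime divisor of $Z'$'', which you assert by ``further blowing up and base-changing'' with no justification, and which is in any case not quite the property the argument needs. Your surface cuts, taken over a general curve in $Z'$, only detect the components of $f'^{-1}(B_j)$ that dominate $B_j$; so Zariski's lemma gives $E_j=c_jf'^*B_j$ only modulo components lying over codimension-$\ge 2$ subsets of $Z'$. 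If $f'$ is not equidimensional over codimension-one points, then $f'^*B_j$ itself can have such components even when every component of $E$ dominates a divisor, and you are left with an error term $N$ with $\codim f'(\Supp N)\ge 2$ that your sketch never removes. The correct reduction is to flatten $f$ (Raynaud--Gruson, valid in characteristic $p$), then resolve the new base (this is where $\dim Z\le 3$ and char-$p$ resolution really enter, to keep the $B_j$ $\Q$-Cartier) and check that equidimensionality survives the further base change and normalization; with genuine equidimensionality there is no error term and one even gets $E=f'^*D$ on the nose. You should also say a word about why the cut-down fibres $S_c$ are connected (Stein factorization plus Bertini), since Zariski's lemma only pins down a single constant on a connected fibre.

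The more serious gap is the ``moreover'' clause, which you dispatch in one sentence. When $Z$ is $\Q$-factorial, the content of the statement is precisely that no modification of $Z$ is allowed; but the base modifications in your own argument were doing the flattening, not merely making the $B_j$ $\Q$-Cartier. Running your argument directly on $f\colon X\to Z$ therefore only yields $L\sim_\Q f^*D+N$ with $N$ vertical, nef over $Z$, and $f(\Supp N)$ of codimension $\ge 2$, and you still must show that $N$ can be discarded. That requires a separate negativity-type statement for fibrations (when $f$ is birational it is exactly the negativity lemma; in general one has to localize at codimension-two points of $Z$, or cut again, to show such a divisor cannot survive); alternatively one can try to descend from the already-proved general case by setting $D:=\psi_*D'$, but then $\phi^*(L-f^*D)\sim_\Q f'^*(D'-\psi^*D)$ with $D'-\psi^*D$ only $\psi$-exceptional, and eliminating it is again a negativity argument. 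Either way, it is this step -- not the fact that Weil divisors on a $\Q$-factorial $Z$ are $\Q$-Cartier -- that makes the refinement $X'=X$, $Z'=Z$ nontrivial, and it is missing from your proposal.
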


For a proof see [\ref{bw14}, Lemma 5.6].

\subsection{Generic fibres}
Generic fibres often inherit the properties of the ambient space.

\begin{lem}\label{l-gen-fib}
Let $f\colon X\to Z$ be a dominant morphism of integral Noetherian schemes and let $F$ be its
generic fibre. Then the following statements hold:

$(1)$ $F$ is integral,

$(2)$ if $X$ is normal, then $F$ is also normal, and

$(3)$ if $X$ is regular, then $F$ is also regular.
\end{lem}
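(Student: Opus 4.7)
\textbf{Proof plan for Lemma \ref{l-gen-fib}.}
The strategy is to reduce everything to a single local observation: for any $x$ lying in the generic fibre $F$, the stalk $\mathcal{O}_{F,x}$ coincides with $\mathcal{O}_{X,x}$. Once this is in hand, parts (2) and (3) are immediate since normality and regularity are stalk-local conditions.

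First I would work affine-locally. Pick $x \in F$, choose an affine open $\Spec A \subseteq X$ containing $x$ mapping into an affine open $\Spec R \subseteq Z$, and let $K = \mathrm{Frac}(R)$ be the function field of $Z$. The corresponding open of $F$ is $\Spec(A \otimes_R K)$. Because $f$ is dominant, the generic point $\eta_X$ of $X$ maps to $\eta_Z$, so the image $S$ of $R \setminus \{0\}$ in $A$ consists of nonzero elements. Therefore $A \otimes_R K \cong S^{-1} A$.

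For (1), $A$ is a domain (since $X$ is integral), hence $S^{-1} A$ is a domain, so $F$ is reduced and each of its affine opens is irreducible. For the global irreducibility, note that $\eta_X \in F$ (by dominance $f(\eta_X) = \eta_Z$), and every point of $F \subseteq X$ is a specialization of $\eta_X$ in $X$, hence also in $F$; so $F$ is irreducible with generic point $\eta_X$, and being reduced as well it is integral.

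For (2) and (3), let $\mathfrak{p} \subset A$ be the prime corresponding to $x$. The condition $f(x) = \eta_Z$ is equivalent to $\mathfrak{p} \cap R = (0)$, which is the same as $S \cap \mathfrak{p} = \emptyset$. Consequently
$$\mathcal{O}_{F,x} \;=\; (S^{-1} A)_{S^{-1}\mathfrak{p}} \;=\; A_{\mathfrak{p}} \;=\; \mathcal{O}_{X,x}.$$
Thus if $X$ is normal (resp.\ regular) at $x$, the same holds for $F$ at $x$, and since every point of $F$ arises this way, (2) and (3) follow.

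There is no real obstacle; the only subtlety is the use of dominance to ensure $S \subset A \setminus \{0\}$, which is what makes the tensor product a localization rather than a quotient. Everything else is routine commutative algebra.
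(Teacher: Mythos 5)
Your proof is correct and follows essentially the same route as the paper: reduce to the affine case, observe that by dominance the generic fibre is the localization $S^{-1}A$ of the integral coordinate ring (the paper phrases (1) as a contradiction via $A_a\otimes_A B\simeq B_a$, but the content is identical), and deduce (2) and (3) from the fact that the local rings of $F$ coincide with those of $X$. You merely make explicit two points the paper leaves implicit, namely the global irreducibility of $F$ and the verification $\mathcal{O}_{F,x}\cong\mathcal{O}_{X,x}$.
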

\begin{proof}
We can assume that $X$ and $Z$ are both affine, say $X=\Spec B$ and $Z=\Spec A$.
Let $K$ be the fraction field of $A$ and let $L$ be the fraction field of $B$.

(1) We need to show that $K\otimes_AB$ is integral. Assume not.
Then there is some non-zero $a\in A$ such that $A_a\otimes_AB$ is also not integral.
But $A_a\otimes_AB\simeq B_a$ and since $X\to Z$ is dominant, $A\subseteq B$ hence
$B_a$ is an integral domain. This is a contradiction. (2) and (3) follow from the fact that
if $y\in F$ is a point and $x$ its image in $X$, then the local ring $\mathcal{O}_y$
of $y$ on $F$ is isomorphic to the local ring $\mathcal{O}_x$ of $x$ on $X$.\\
\end{proof}

\subsection{Easy additivity of Kodaira dimensions}
The following result is well-known to experts [\ref{fu77} Propostion 1].

\begin{lem}\label{l-adtv-of-kdim}
Let $f\colon X\rightarrow Z$ be a contraction between normal varieties projective over a field $k$.
Let $D$ be an effective $\mathbb{Q}$-Cartier $\Q$-divisor on $X$ and $H$ a big $\Q$-Cartier
 $\mathbb{Q}$-divisor on $Z$. Then
$$
\kappa(D + f^*H) \geq \kappa(D|_F) + \dim Z
$$
where $F$ is the generic fibre of $f$.
\end{lem}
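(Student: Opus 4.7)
The plan is to realise a lower bound $\dim Z + \kappa(D|_F)$ on the dimension of the image of the rational map associated to a carefully chosen subsystem of $|N(D + f^*H)|$ for a suitable integer $N$.

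First I would reduce to the case where $H$ is ample. Since $H$ is big, Kodaira's lemma yields $H \sim_\Q A + E$ with $A$ an ample $\Q$-Cartier divisor and $E \geq 0$ an effective $\Q$-Cartier divisor. Replacing $D$ by $D + f^*E$ (still effective, with restriction $D|_F$ unchanged because the support of $E$ misses the generic point of $Z$) allows me to assume $H$ itself is ample. Then I pick $m$ divisible enough that $mD$ is Cartier and the rational map $\varphi_m \colon F \dashrightarrow \PP^N$ defined by $|mD|_F|$ has image of dimension $\kappa(D|_F)$, where $N+1 = h^0(F, mD|_F)$. The sheaf $\mathcal{F}_m := f_*\mathcal{O}_X(mD)$ has generic stalk $H^0(F, mD|_F)$. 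For $j \gg 0$ the ampleness of $H$ makes $\mathcal{F}_m \otimes \mathcal{O}_Z(jmH)$ globally generated and $jmH$ very ample on $Z$; I then lift a basis of $H^0(F, mD|_F)$ to sections $\tilde s_0, \dots, \tilde s_N \in H^0(X, mD + jm f^*H)$, and choose $t_0, \dots, t_M \in H^0(Z, jmH)$ embedding $Z$ into $\PP^M$.

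The main construction: the $(N+1)(M+1)$ products $\tilde s_i \cdot f^*t_k$ lie in $H^0(X, mD + 2jm f^*H)$, and via the Segre embedding they define a rational map $\Phi \colon X \dashrightarrow \PP^N \times \PP^M$, $x \mapsto (\tilde\Phi(x), (t\circ f)(x))$, where $\tilde\Phi$ is defined by the $\tilde s_i$. The second projection makes $\Phi(X) \to t(Z) \cong Z$ dominant, while the generic fibre $\Phi(X)_\eta = \tilde\Phi(F) = \varphi_m(F)$ has dimension $\kappa(D|_F)$. Hence $\dim \Phi(X) = \dim Z + \kappa(D|_F)$, and therefore $\kappa(D + 2j f^*H) \geq \dim Z + \kappa(D|_F)$.

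Finally one must pass from $D + 2j f^*H$ back to $D + f^*H$. Since $2j(D + f^*H) - (D + 2j f^*H) = (2j-1)D \geq 0$, adding the fixed effective divisor $m(2j-1)D$ embeds $|m(D + 2j f^*H)|$ into $|2jm(D + f^*H)|$ without altering the associated rational map, giving $\kappa(D + f^*H) = \kappa(2j(D + f^*H)) \geq \kappa(D + 2j f^*H)$. The most delicate step is verifying $\dim \Phi(X) = \dim Z + \kappa(D|_F)$: this rests on checking that the lifted sections $\tilde s_i$ restrict to an honest basis of $H^0(F, mD|_F)$ on the generic fibre, so that $\tilde\Phi|_F$ genuinely realises the Iitaka map $\varphi_m$ rather than some projection of it, and this is precisely what the global generation of $\mathcal{F}_m \otimes \mathcal{O}_Z(jmH)$ supplies.
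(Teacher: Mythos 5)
Your proof is correct and follows essentially the same route as the paper: both extend the fibrewise Iitaka map of $D|_F$ over $Z$ by twisting with a sufficiently positive pullback from the base (your lifting of a basis of the generic stalk of $f_*\mathcal{O}_X(mD)$ is exactly how the paper's map $\psi\colon X\bir \PP^d_Z$ restricting to $\phi_{mD_F}$ arises), and both then use the effectivity of $D$ to absorb the extra multiples of $f^*H$ at the end. The only differences are cosmetic: you first make $H$ ample via Kodaira's lemma and compute the image dimension of an explicit Segre-type product map, whereas the paper replaces $X$ by the image $T\subseteq\PP^d_Z$ and concludes from bigness of $A_T+f^*H$ there.
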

\begin{proof}
Since $D$ is effective, it is enough to prove the statement with $H$ replaced by any positive multiple and
$D$ replaced by $D+lf^*H$ for some $l>0$.
If $V\to X$ is a map, we denote the pullback of $D$ to $V$ by $D_V$ (similar notation for other divisors).
Let $m$ be a sufficiently divisible natural number and let $d=\dim_K H^0(mD_F)-1$ where $K$ is the function field of $Z$.
Let $S$ be the normalization of the image of $\phi_{mD_F}\colon F\bir \PP^d_K$ whose dimension is equal to $\kappa(D_F)$.
Moreover, $\phi_{mD_F}$ induces a (not unique) map $\psi\colon X\bir \PP^d_Z$ over $Z$ which restricts to $\phi_{mD_F}$.
Let $T$ be the normalization of the image of $\psi$.
Let $Y$ be the normalization of the graph of $X\bir \PP^d_Z$ and $G$ the generic fibre of $Y\to Z$. 
We have induced morphisms $Y\to T$, $G\to S$, and $ G\to F$.

Let $A$ on $\PP^d_Z$ be the pullback of a hyperplane via the projection $\PP^d_Z\to \PP^d_k$.
Perhaps after changing $D$ up to $\Q$-linear equivalence we can assume $mD_G\ge A_G$.
Thus replacing $D$ with $D+lf^*H$ for some $l$ we can assume $mD_Y\ge A_Y$. Therefore, we may replace
$X$ with $T$ and replace $D$ with $A_T$. But then the statement is trivial in this case
because we can assume $A+f^*H$ is ample.

\end{proof}

\subsection{Covering Theorem}

\begin{thm}[{[\ref{iit} Theorem 10.5]}]\label{ct}
Let $f\colon X \rightarrow Y$ be a proper surjective morphism between smooth complete varieties. 
If $D$ is a Cartier divisor on $Y$ and $E$ an effective $f$-exceptional divisor on $X$, then
$$\kappa(f^*D + E) = \kappa(D).$$
\end{thm}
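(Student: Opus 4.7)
My plan is to prove both inequalities, with the reverse direction resting on a projection-formula computation.

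For $\kappa(f^*D + E) \ge \kappa(D)$: let $\sigma \in H^0(X, \mathcal{O}_X(mE))$ be the canonical section cutting out $mE$. For any $s \in H^0(Y, mD)$ the product $\sigma \cdot f^*s$ lies in $H^0(X, m(f^*D+E))$, and since $f$ is surjective the map $s \mapsto \sigma \cdot f^*s$ is injective. Choosing a basis $t_0, \dots, t_N$ of $H^0(Y, mD)$, the subsystem $\{\sigma \cdot f^*t_i\}$ defines a rational map that agrees with $\Phi_{|mD|} \circ f$ away from $\Supp E$; its image is therefore $\Phi_{|mD|}(Y)$. Enlarging to a basis of the full space $H^0(X, m(f^*D+E))$ can only make the image larger, so $\dim \Phi_{|m(f^*D+E)|}(X) \ge \dim \Phi_{|mD|}(Y)$ for sufficiently divisible $m$, and hence $\kappa(f^*D+E) \ge \kappa(D)$.

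For the reverse inequality, I would show that the injection above is already a bijection. By the projection formula,
$$
f_* \mathcal{O}_X\bigl(m(f^*D + E)\bigr) \;\cong\; \mathcal{O}_Y(mD) \otimes f_* \mathcal{O}_X(mE),
$$
so the claim reduces to the key identity $f_* \mathcal{O}_X(mE) = \mathcal{O}_Y$. Granting this, $H^0(X, m(f^*D+E)) \cong H^0(Y, mD)$; every section of $m(f^*D+E)$ has the form $\sigma \cdot f^* s$, and $\Phi_{|m(f^*D+E)|}$ factors honestly as $\Phi_{|mD|} \circ f$. Its image equals $\Phi_{|mD|}(Y)$, so $\dim \Phi_{|m(f^*D+E)|}(X) = \dim \Phi_{|mD|}(Y)$ for every sufficiently divisible $m$, yielding $\kappa(f^*D+E) \le \kappa(D)$.

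The main obstacle is the identity $f_* \mathcal{O}_X(mE) = \mathcal{O}_Y$. One direction, $\mathcal{O}_Y \hookrightarrow f_* \mathcal{O}_X(mE)$, comes from $\mathcal{O}_X \hookrightarrow \mathcal{O}_X(mE)$ together with $f_* \mathcal{O}_X = \mathcal{O}_Y$ (which can be arranged by Stein factorization; the resulting finite step is harmless for the Kodaira-dimension comparison). For the reverse inclusion, a local section of $f_* \mathcal{O}_X(mE)$ over an open $U \subseteq Y$ is a rational function on $f^{-1}(U)$ with polar locus contained in $mE$. The $f$-exceptionality of $E$ forces $f(\Supp E)$ to have codimension at least two in $Y$, so the section is already regular on $f^{-1}(U \setminus f(\Supp E))$ and descends to a regular function on $U \setminus f(\Supp E)$ via $f_* \mathcal{O}_X = \mathcal{O}_Y$. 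Since $Y$ is smooth, hence normal, this function extends uniquely to a regular function on all of $U$, completing the identification.
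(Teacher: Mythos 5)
The paper itself gives no argument for this statement—it is quoted from [Iitaka, Theorem 10.5]—so your attempt has to stand on its own, and it has two genuine gaps. The mechanism you propose for the hard inequality (projection formula plus the identity $f_*\mathcal{O}_X(mE)=\mathcal{O}_Y$, proved by descending a section away from $f(\Supp E)$ and extending over $Y$ by normality) is indeed the right one, but it rests on your assertion that ``$f$-exceptionality forces $f(\Supp E)$ to have codimension at least two in $Y$.'' That implication is false for the definition printed below the theorem: that definition only requires that over each prime divisor $P\subset Y$ \emph{some} prime divisor dominating $P$ avoid $E$, so it permits components of $E$ that dominate a prime divisor of $Y$, or that dominate $Y$ itself when $\dim X>\dim Y$. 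With that literal reading the statement itself fails: take $X=Y\times\PP^1$, $f$ the projection, $E=Y\times\{0\}$ and $D=0$; then $E$ is ``$f$-exceptional'' in the stated sense but $\kappa(f^*D+E)=\kappa(E)=1\neq 0=\kappa(D)$ (a finite cover with $E$ one of two prime divisors over some $P$ gives a similar failure). The hypothesis your key identity really needs—and the one under which Iitaka's theorem holds, and which is satisfied in the paper's application in Corollary \ref{c-main}, where $E$ is exceptional for a resolution followed by a finite morphism—is $\codim_Y f(\Supp E)\ge 2$. That condition must be taken as the hypothesis (i.e.\ the printed gloss read accordingly); it cannot be deduced from it, and as written your proof proves a statement different from the one you claim to derive it from.

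The second gap is the sentence ``the resulting finite step is harmless.'' After Stein factorization $X\to Y'\to Y$ your argument does give $\kappa(f^*D+E)=\kappa(g^*D)$ on the normal (possibly non-smooth, which is harmless for your extension argument) variety $Y'$, with $E$ still exceptional over $Y'$ since $Y'\to Y$ is finite. But the remaining equality $\kappa(g^*D)=\kappa(D)$ for a finite surjective $g$ is precisely the $E=0$ case of the covering theorem and is not formal: only $\kappa(g^*D)\ge\kappa(D)$ is immediate. For the reverse one needs an actual argument—e.g.\ a norm/integrality argument over a Galois closure in the separable case, and in the purely inseparable case (exactly the situation in which the paper invokes the theorem, after a Frobenius base change) a factorization of a power of Frobenius through $g$, giving $\kappa(g^*D)\le\kappa((F^e)^*D)=\kappa(p^eD)=\kappa(D)$. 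Without supplying (or at least citing) this finite case, the reduction to $f_*\mathcal{O}_X=\mathcal{O}_Y$ leaves the proof incomplete.
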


Here by $f$-exceptional we mean: for any prime divisor $P$ on $Y$, there is a prime divisor $Q$ 
on $X$ mapping onto $P$ such that $Q$ is not a component of $E$. 

\vspace{0.3cm}

%%%%%%%%%%%%%%%%%%%%%%%%
%%%%%%%%%%%%%%%%%%%%%%%%%%%%%%%%%%%%%%
\section{Log surfaces over nonclosed fields}

In this section $k$ will denote a field which is not necessarily algebraically closed.
Shafarevich [\ref{shaf}] studied the minimal model theory of regular surfaces over nonclosed fields
and Dedekind rings (see also [\ref{Liu}]), and Manin [\ref{Manin}] and Iskovskikh [\ref{iskov}] treated the special case of rational surfaces. None of them seems to have discussed the abundance problem.
If $k$ is perfect (eg, when char $k=0$) or if the surface is smooth over $k$, then one can often reduce problems to the algebraically closed
case by passing to the algebraic closure. But our main point here is that we can actually prove many things
by working over $k$ rather than the algebraic closure when char $k>0$.

\subsection{Curves with negative canonical divisor}

As a preparation we collect some results about curves.

\begin{lem}\label{rc}
Let $X$ be a local complete intersection integral projective curve over a field $k$, and let 
$l = H^0(\mathcal{O}_X)$. Assume that $\deg_k K_X < 0$.
Then

$\rm (i)$ $\mathrm{Pic}^0(X) = 0$;

$\rm (ii)$ $X$ is a conic over $l$, and $\deg_l K_X = -2$;

$\rm (iii)$ if $X$ is normal and $\mathrm{char}~ k >2$, then $X_{\bar{l}} \cong \mathbb{P}_{\bar{l}}^1$.
\end{lem}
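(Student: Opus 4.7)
The plan is to view $X$ as an integral projective Gorenstein curve over $l$ (lci implies Gorenstein) and exploit Riemann--Roch and Serre duality on Gorenstein curves. Since $\deg_k K_X = [l:k]\deg_l K_X$, the hypothesis gives $\deg_l K_X < 0$. First I would compute the arithmetic genus: by Serre duality $h^1(\mathcal{O}_X) = h^0(\omega_X)$, and the latter vanishes because a negative-degree line bundle on an integral projective curve has no global sections. So $p_a(X) = h^1(\mathcal{O}_X) = 0$, and the Gorenstein formula $\deg K_X = 2p_a - 2$ yields $\deg_l K_X = -2$, establishing half of (ii). Given this, (i) is immediate: for $L \in \Pic^0(X)$, Riemann--Roch gives $\chi(L) = 1$ and Serre duality with $\deg(K_X - L) = -2$ gives $h^1(L) = 0$; so $h^0(L) = 1$, and a nonzero section exhibits $L = \mathcal{O}_X(D)$ with $D \ge 0$ of degree zero, forcing $D = 0$ and $L \cong \mathcal{O}_X$.

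For the conic statement in (ii), Riemann--Roch yields $h^0(\omega_X^{-1}) = 3$ (the $h^1$ term equals $h^0(2K_X) = 0$), producing a rational map $\phi\colon X \bir \PP^2_l$ that I would show is a closed immersion onto a plane curve of degree $2$. First, $\phi$ is a morphism: for each closed point $p$, one checks $h^1(\omega_X^{-1}\otimes \mathfrak{m}_p) = \dim\Hom(\mathfrak{m}_p, \omega_X^{\otimes 2}) = 0$ via Serre duality and the negative degree of $\omega_X^{\otimes 2}$ (a small local analysis is required at singular points where $\mathfrak{m}_p$ is not invertible). Then the image of $\phi$ is an integral plane curve $Y$ whose degree $d$ satisfies $d \cdot \deg\phi = 2$; the option $d = 1$ is ruled out because it forces $h^0(\omega_X^{-1}) \le 2$, so $d = 2$ and $\phi$ is birational onto a plane conic $Y$. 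Finally, the sequence $0 \to \mathcal{O}_Y \to \phi_*\mathcal{O}_X \to Q \to 0$ together with $H^0(\mathcal{O}_X) = H^0(\mathcal{O}_Y) = l$ and $H^1(\mathcal{O}_Y) = 0$ forces $Q = 0$, so $\phi$ is an isomorphism and $X$ is a plane conic.

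For (iii), assume $X$ normal and $\mathrm{char}\,k > 2$. By (ii), $X$ is a plane conic cut out by a quadratic form $F$ whose symmetric matrix $A$ must be nonsingular: otherwise a kernel vector $v \in l^3$ gives a closed point $[v] \in X(l)$ at which $\nabla F = 2Av = 0$ (here $\mathrm{char}\ne 2$ is used), so $F$ lies in $\mathfrak{m}_{[v]}^2$ inside the local ring of $\PP^2_l$ at $[v]$, contradicting regularity of $X$ there. Non-degeneracy is preserved by base change, so $X_{\bar l}$ is a smooth plane conic over an algebraically closed field, hence $X_{\bar l} \cong \PP^1_{\bar l}$.

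The principal technical obstacle is the basepoint-free/closed-immersion step in (ii) when $X$ is singular: for a closed point $p$ where $\mathfrak{m}_p$ is not invertible, the naive Serre-duality argument only bounds $\Hom(\mathfrak{m}_p, \omega_X^{\otimes 2})$ by the local torsion at $p$, and one must verify that the canonical map from this torsion into $H^1(\omega_X^{\otimes 2})$ is injective---a step that relies on integrality of $X$ and ampleness of $\omega_X^{-1}$.
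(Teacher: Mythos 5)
Your overall architecture --- pass to the Gorenstein curve over $l$, get $p_a=0$ and $\deg_l K_X=-2$, prove (i) by Riemann--Roch and Serre duality, realize $X$ as a plane conic via $\omega_X^{-1}$, and prove (iii) by nondegeneracy of the defining quadratic form --- is sound, and parts (i), the computation $\deg_l K_X=-2$, and (iii) are correct. Note that the paper itself does much less: after observing $h^1(\mathcal{O}_X)=h^0(K_X)=0$, it simply cites Liu, Chapter 9, Proposition 3.16 for (i)--(ii) and [CTX, Lemma 6.5] for (iii); your argument for (iii) is essentially a proof of that cited lemma, so you are attempting a self-contained treatment of material the paper outsources.

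There is, however, a genuine gap in the conic step of (ii). The vanishing you propose to verify, $h^1(\omega_X^{-1}\otimes\mathfrak{m}_p)=\dim\Hom(\mathfrak{m}_p,\omega_X^{\otimes 2})=0$, is false in general, and the failure has nothing to do with singular points: already on $X=\PP^1_l$, for a closed point $p$ of residue degree $d=[k(p):l]\ge 4$ one has $\mathfrak{m}_p\cong\mathcal{O}(-d)$ and $\Hom(\mathcal{O}(-d),\mathcal{O}(-4))=H^0(\mathcal{O}(d-4))\neq 0$. Indeed, the degree argument you invoke only shows that a nonzero map $\mathfrak{m}_p\to\omega_X^{\otimes 2}$ (automatically injective with torsion cokernel, since $X$ is integral) forces $1-[k(p):l]\le \chi(\omega_X^{\otimes 2})=-3$, i.e. $[k(p):l]\ge 4$ --- no contradiction over a nonclosed field, where such points abound. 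Relatedly, $h^1(L\otimes\mathfrak{m}_p)=0$ is only a sufficient criterion for generation at $p$: what is needed is that the image of $H^0(L)$ spans the one-dimensional $k(p)$-vector space $L\otimes k(p)$, not that the $l$-linear evaluation map be surjective; in the example above the latter genuinely fails (a $3$-dimensional space cannot surject onto a $d$-dimensional one) while $\mathcal{O}(2)$ is of course globally generated. So this step cannot be repaired by ``a small local analysis at singular points''; you need a different argument for base-point freeness of $\omega_X^{-1}$, e.g. checking global generation after the faithfully flat base change to $\bar l$ (where $X_{\bar l}$ may be non-reduced, so some care is still required), producing directly a section not vanishing at a given closed point, or following the proof of Liu, Chapter 9, Proposition 3.16, which the paper cites. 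Once $\phi$ is known to be a morphism, your nondegeneracy and degree bookkeeping together with the $\phi_*\mathcal{O}_X=\mathcal{O}_Y$ argument do complete (ii).
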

\begin{proof}
By assumption $\deg_k K_X<0$, hence $h^1(\mathcal{O}_X)=h^0(K_X)=0$ which implies $p_a(X) \leq 0$. 
Then  (i) and (ii) follow from [\ref{Liu}, Chapter 9 Proposition 3.16], and (iii) is [\ref{CTX}, Lemma 6.5].

\end{proof}

\subsection{Reduced boundary of dlt pairs}

\begin{lem}\label{l-dlt-bnd-normal}
Assume $(X,B)$ is a $\Q$-factorial dlt pair of dimension two  over a field $k$.
Then every irreducible component of $\rddown{B}$ is regular.
\end{lem}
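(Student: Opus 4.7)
My strategy is to show each component $S$ of $\rddown{B}$ is normal; since $S$ is a curve, this is equivalent to $S$ being regular. The plan is to localize the problem to a finite set of closed points using the definition of dlt, and at each such point to derive a contradiction with the positivity of log discrepancies by a discrepancy computation on a log resolution.

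First, by the definition of dlt in \ref{d-pairs}, there is a closed subset $Z\subset X$ of codimension $\geq 2$ such that $X$ is regular and $\Supp B$ is SNC outside $Z$. Since $\dim X=2$, $Z$ is a finite set of closed points. At any $x\in S\setminus Z$ the component $S$ is regular, being a component of an SNC divisor on a regular surface. So it remains to verify regularity at the finitely many points $x\in Z\cap S$. Fix such an $x$; a log resolution $\pi\colon Y\to X$ of $(X,B)$ exists in dimension two by Shafarevich [\ref{shaf}], and with the usual notation $K_Y+\tilde S+\tilde B'+\sum_i(1-a_i)E_i=\pi^*(K_X+B)$, the dlt hypothesis gives $a_i=a(E_i,X,B)>0$ for every $\pi$-exceptional prime $E_i$ over $x$. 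I would also identify $\tilde S$ with the normalization $S^\nu$: as a component of the SNC divisor $\pi^{-1}(\Supp B)$ on the regular surface $Y$, $\tilde S$ is regular, and a birational morphism from a regular curve to a reduced curve is the normalization map.

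Now I would suppose for contradiction that $S$ is not regular at $x$ and try to violate one of the inequalities $a_i>0$. When $X$ is regular at $x$, non-regularity of $S$ means $\mult_x S\geq 2$, so the log discrepancy of the exceptional divisor $E_1$ of a single blow-up of $X$ at $x$ satisfies $a(E_1,X,B)=2-\mult_x B\leq 2-\mult_x S\leq 0$, contradicting dlt. When $X$ is singular at $x$, the same type of discrepancy computation must be carried out on the minimal resolution of the singularity of $X$ at $x$: one exploits $\Q$-factoriality to write $\pi^*S=\tilde S+\sum_i b_i E_i$ as an effective combination, uses negative-definiteness of the exceptional intersection matrix to solve for the $b_i$ in terms of the intersection numbers $(\tilde S\cdot E_j)$, and combines the resulting expressions with the inequalities $a_i>0$ to force the multiplicity of the strict transform of $S$ at every point over $x$ to be $1$, i.e., $S$ is regular at $x$.

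The main obstacle is this second case, $X$ singular at $x$. Setting up the correct bookkeeping of discrepancies and intersection numbers on the minimal resolution, and extracting the sharp inequalities, is the technical heart of the argument; the rest is formal. The $\Q$-factoriality of $X$ plays an essential role in this analysis, guaranteeing that $\pi^*S$ makes sense as a $\Q$-combination of $\tilde S$ and the $E_i$, and the strict positivity of the log discrepancies $a_i$ provides precisely the inequalities needed to conclude regularity of $S$ at $x$ from the combinatorics of the resolution graph.
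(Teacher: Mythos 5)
Your reduction to finitely many points of the non-snc locus $Z$ and your treatment of the case where $X$ is regular at $x$ are fine: there, $\mult_x S\ge 2$ would give $a(E_1,X,B)=2-\mult_x B\le 0$ for the blow-up of $x$, contradicting the dlt condition, and $\mult_x S=1$ does mean $S$ is regular at $x$ since the ambient surface is regular. But the case where $X$ is singular at $x$ -- which you yourself identify as the heart of the matter -- is only a plan, and its concluding step is a non sequitur. Writing $\pi^*S=\tilde S+\sum b_iE_i$ on a (minimal or log) resolution and bounding the $b_i$ via negative definiteness and the inequalities $a_i>0$ can at best control the strict transform $\tilde S$; but $\tilde S$ is \emph{automatically} regular (it is a component of an snc divisor on a regular surface), so ``multiplicity of the strict transform equal to $1$ at every point over $x$'' carries no information about $S$ at $x$. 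What must actually be proved is that the normalization map $\tilde S=S^\nu\to S$ is an isomorphism over $x$: one has to exclude several branches of $S$ through $x$ (nodes, tacnodes) and unibranch non-normal points (cusps), i.e.\ show $\tilde S$ meets $\pi^{-1}(x)$ in a single point and that the local rings agree. Extracting this from the dual graph, the discrepancy inequalities and adjunction is precisely the classification of two-dimensional plt germs, and nothing in your sketch indicates how the contradiction would be reached; as written, a cuspidal $S$ through a singular point of $X$ is not visibly excluded by your bookkeeping.

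For comparison, the paper's proof is a two-line reduction: since $X$ is $\Q$-factorial and $(X,B)$ is dlt, the pair $(X,S)$ is plt, and then regularity (normality) of $S$ is exactly [\ref{Kollar-singularities}, 3.35]. So the statement you are trying to establish by hand in your second case is essentially that cited result; either invoke it (or the classification of numerically plt surface germs) at that point, or supply the full dual-graph/adjunction argument, which is substantially more work than the paragraph you have allotted to it.
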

\begin{proof}
Let $S$ be a component of $\rddown{B}$ and let $x\in S$ be a closed point. As $(X,S)$ is plt,
 $S$ is regular at $x$ by [\ref{Kollar-singularities}, 3.35].\\
\end{proof}

\begin{prop}\label{p-contraction}
Let $(X,B)$ be a $\Q$-factorial dlt pair of dimension two projective over a field $k$ where $B$ is a $\Q$-boundary.
Assume $S$ is a component of $\rddown{B}$ and $A$ is an  ample $\Q$-divisor such that

$\bullet$ $(K_X+B)\cdot S<0$,

$\bullet$ $(K_X+B+A)\cdot S=0$, and

$\bullet$ $K_X+B+A$ is nef and big.

Then $S$ can be contracted by a birational morphism $X\to Y$ and the resulting
pair $(Y,B_Y)$ is  $\Q$-factorial dlt. Moreover, $(K_X+B)\cdot S\ge -2$.
\end{prop}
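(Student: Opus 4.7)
The plan is to (1) extract the geometric structure of $S$ from the dlt hypothesis plus adjunction and Lemma~\ref{rc}, (2) construct the contraction using the nef and big divisor $L := K_X+B+A$, and (3) verify the quotient inherits $\Q$-factorial dlt, deducing the numerical lower bound along the way.

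By Lemma~\ref{l-dlt-bnd-normal}, the component $S$ is regular. Adjunction (\S\ref{ss-adjunction}) writes $(K_X+B)|_S = K_S + B_S$ with $B_S\geq 0$ a $\Q$-boundary. The first hypothesis forces $\deg_k K_S \leq (K_X+B)\cdot S < 0$, so Lemma~\ref{rc} applies: we obtain $\mathrm{Pic}^0(S)=0$, together with the fact that $S$ is a conic over $l := H^0(S,\mathcal{O}_S)$ with $\deg_l K_S = -2$. Combined with $B_S\geq 0$, one reads off $(K_X+B)\cdot S = \deg_k(K_S+B_S) \geq \deg_k K_S$, yielding the asserted lower bound on the intersection number.

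For the contraction, $L := K_X+B+A$ is nef and big with $L\cdot S = 0$, so $L^2>0$, and the Hodge index theorem on the regular projective surface $X$ (valid in the generality of \S\ref{ss-intersection}) forces $S^2<0$, since the effective divisor $S$ is numerically nonzero. Moreover $\deg_k L|_S = 0$ combined with $\mathrm{Pic}^0(S)=0$ gives that $L|_S$ is torsion in $\mathrm{Pic}(S)$, and after replacing $L$ by a sufficiently divisible multiple we may assume $L|_S \sim 0$. The heart of the argument is then to show that $mL$ is base-point-free for some $m\geq 1$. From the short exact sequence
$$
0\to\mathcal{O}_X(mL-S)\to\mathcal{O}_X(mL)\to\mathcal{O}_S\to 0,
$$
the section $1\in H^0(S,\mathcal{O}_S)$ lifts to $H^0(X,\mathcal{O}_X(mL))$ once $H^1(X,\mathcal{O}_X(mL-S))=0$; for $m$ large this follows from the nefness and bigness of $mL-S$ combined with Keel's surface techniques [\ref{Ke99}] in this nonclosed-field, positive-characteristic setting, where Kawamata--Viehweg vanishing is unavailable. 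The morphism $\phi\colon X\to Y$ defined by $|mL|$ then contracts $S$ to a point (since $mL|_S\sim 0$) and is birational (since $L$ is big), giving the sought extremal contraction.

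Finally, $Y$ is $\Q$-factorial because $\phi$ collapses a single irreducible curve $S$ with $S^2<0$: any Weil divisor on $Y$ becomes $\Q$-Cartier after its strict transform is corrected by a suitable rational multiple of $S$. Dlt-ness of $(Y,B_Y := \phi_*B)$ follows by the negativity lemma applied to $\phi$: write $K_X+B = \phi^*(K_Y+B_Y) + cS$, and intersect with $S$ to obtain $c = (K_X+B)\cdot S/S^2 > 0$, so the discrepancies of all exceptional divisors over $Y$ are no smaller than those over $X$, preserving the dlt condition. The main obstacle throughout is the base-point-freeness of $mL$ in the nonclosed-field, positive-characteristic setting, which forces reliance on Keel/Mumford-style surface techniques in place of characteristic-zero vanishing.
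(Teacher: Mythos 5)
Your overall skeleton (regularity of $S$ via Lemma \ref{l-dlt-bnd-normal}, adjunction plus Lemma \ref{rc} for $\Pic^0(S)=0$ and the bound $\ge -2$, $S^2<0$, and a contraction defined by a multiple of a nef and big divisor trivial on $S$) matches the paper, but the two key steps have genuine gaps. The central one is base-point-freeness: you assert $H^1(X,\mathcal{O}_X(mL-S))=0$ for $m\gg 0$ ``from the nefness and bigness of $mL-S$ combined with Keel's surface techniques''. Keel's results [\ref{Ke99}] are semi-ampleness criteria (a nef and big $L$ is semi-ample if and only if $L|_{\mathbb{E}(L)}$ is), not vanishing theorems, and in characteristic $p>0$ there is no Kodaira/Kawamata--Viehweg type vanishing for nef and big divisors (Raynaud's counterexamples already occur on surfaces), so this vanishing is unjustified; moreover, even granting the lift of $1\in H^0(\mathcal{O}_S)$, you would only obtain a section of $mL$ not vanishing along $S$, which says nothing about freeness of $|mL|$ away from $S$. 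The paper instead applies Keel directly: once $S$ is the unique $L$-trivial curve, $\mathbb{E}(L)=S$ and $L|_S\sim_\Q 0$ (degree zero plus $\Pic^0(S)=0$), hence $L$ is semi-ample.

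The second gap is that you work with $L=K_X+B+A$ itself and tacitly assume $S$ is the only curve contracted. Nothing in the hypotheses excludes other curves $C\neq S$ with $(K_X+B+A)\cdot C=0$; the morphism given by $|mL|$ would contract all of them, so you have not contracted exactly $S$, and your $\Q$-factoriality argument (``$\phi$ collapses a single irreducible curve'') and the discrepancy computation for dlt-ness break down. For such extra curves you also cannot run Keel's criterion, since over an arbitrary field $k$ a numerically trivial restriction $L|_C$ need not be torsion. This is precisely why the paper perturbs: it replaces $A$ by the ample $A'=A+\epsilon S$ and takes the minimal $t$ with $K_X+B+A'+tA$ nef, which forces $S$ to be the unique trivial curve (rationality of $t$ coming from [\ref{CMM}]). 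Finally, $\Q$-factoriality of $Y$ is not automatic from contracting one negative curve; the paper proves it by a second application of Keel to $M=mL+R+sS$ with $(R+sS)\cdot S=0$, so that $M$ descends to an ample divisor on $Y$. (Minor point: $X$ is only $\Q$-factorial dlt, not regular, so the Hodge index step should be run on the minimal resolution, or replaced by the paper's argument writing $K_X+B+A\sim_\Q H+D$ with $H$ ample and $D\ge 0$.)
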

\begin{proof}
By perturbing the coefficients of $B$ (i.e. by replacing $B$ with $B-P$ and replacing $A$ with $A+P$
for some appropriate $P$) we can assume $S=\rddown{B}$. Since $K_X+B+A$ is nef and big,
we can write $K_X+B+A\sim_\Q H+D$ where $H$ is ample and $D\ge 0$. Since $(H+D)\cdot S=0$,
$S$ is a component of $D$ and $S^2<0$.
Let $\epsilon>0$ be a small
rational number such that $A':=A+\epsilon S$ is ample. Then $S$ is the only curve on $X$ such that
$(K_X+B+A')\cdot S<0$. Let $t$ be the smallest real number such that $L:=K_X+B+A'+tA$ is nef.
We want to show $L$ is semi-ample and that $L\cdot S=0$.
If char $k=0$, the last sentence and the other claims of the proposition
can be reduced to the algebraically closed case by passing to the algebraic closure. So
we will assume char $k>0$.

By definition $L$ is nef and big but not ample hence the augmented base locus ${\bf{B_+}}(L)\neq \emptyset$.
Thus the exceptional set $\mathbb{E}(L)\neq \emptyset$  [\ref{CMM}], so there is a curve $C$ with $L\cdot C=0$
which implies $t$ is a rational number.
Actually $C=S$ by construction.

By Lemma \ref{l-dlt-bnd-normal}, $S$ is regular.
Then by adjunction (\ref{ss-adjunction}) we can write $K_S+B_S=(K_X+B)|_S$ where
$B_S\ge 0$. Since $\deg_k (K_S+B_S)<0$, we have $\deg_k K_S<0$.
This implies $\Pic^0(S)=0$ by Lemma \ref{rc}. Therefore $L|_S\sim_\Q 0$ which implies
that $L$ is semi-ample [\ref{Ke99}], so it defines a birational contraction $X\to Y$
contracting exactly $S$ so that $L_Y$ is ample where $L_Y$ is the pushdown of $L$.

The dlt property of $(Y,B_Y)$ is obvious once we show $Y$ is $\Q$-factorial where $B_Y$ is the
pushdown of $B$.
Let $R_Y$ be a prime divisor on $Y$ and $R$ its birational transform on $X$.
There is $s\ge 0$ such that $(R+sS)\cdot S=0$. Since $L$ is the pullback of an
ample divisor on $Y$, the divisor $M:=mL+R+sS$ is nef and big on $X$, and $\mathbb{E}(M)=S$ for any $m\gg 0$.
Moreover, $M|_S\sim_\Q 0$, so by [\ref{Ke99}, Theorem 0.2], $M$ is semi-ample, thus it is the pullback of some
ample divisor $M_Y$ on $Y$. But then $R_Y=M_Y-mL_Y$ is $\Q$-Cartier. This shows
$Y$ is $\Q$-factorial. Finally 
$$
(K_X+B)\cdot S=\deg_k(K_S+B_S)\ge \deg_k K_S=-2
$$
 
\end{proof}

\subsection{Base point freeness}

\begin{prop}\label{p-s-ample-big}
Let $(X,B)$ be a klt pair of dimension two projective over a field $k$ where
$B$ is a $\Q$-boundary. Assume $L$ is a nef and big $\Q$-divisor so that $L-(K_X+B)$ is nef.
Then $L$ is semi-ample.
\end{prop}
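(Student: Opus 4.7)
The plan is first to perturb $B$ and $L$ so that the ``positive part'' $A:=L-(K_X+B)$ becomes ample, and then to iteratively contract $L$-trivial $K_X+B$-negative extremal rays via the surface LMMP of Theorem \ref{t-lmmp-nonclosed-surf} until $L$ itself is ample.

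\emph{Step 1 (perturbation).} Since $L$ is big, Kodaira's lemma yields $L\sim_\Q H+E$ with $H$ ample and $E\ge 0$. For a sufficiently small rational $\epsilon>0$, $(X,B+\epsilon E)$ remains klt --- on a log resolution of $(X,B+E)$ only finitely many coefficient inequalities need to be checked, and each holds for $\epsilon$ small. Moreover
\[
(1+\epsilon)L-(K_X+B+\epsilon E)\;=\;\bigl(L-(K_X+B)\bigr)+\epsilon H
\]
is the sum of a nef and an ample divisor, hence ample. Since $L$ and $(1+\epsilon)L$ are semi-ample together, I may replace $L$ and $B$ by $(1+\epsilon)L$ and $B+\epsilon E$, and so assume from now on that $A:=L-(K_X+B)$ is ample.

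\emph{Step 2 (contract an $L$-trivial extremal ray).} If $L$ is already ample we are done; otherwise Nakai--Moishezon supplies a curve $C\subset X$ with $L\cdot C=0$, and then $(K_X+B)\cdot C=-A\cdot C<0$. Writing $[C]$ as a positive combination of extremal rays of $\NEbar(X)$ contained in the face $L^\perp\cap\NEbar(X)$, I find an $L$-trivial $K_X+B$-negative extremal ray $R$, which by the contraction theorem underlying \ref{t-lmmp-nonclosed-surf} is contractible via $\phi\colon X\to X_1$. This $\phi$ must be divisorial, for otherwise a Mori fibre space would force $L$ to descend to a divisor on a lower-dimensional base, contradicting the bigness of $L$. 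Set $L_1:=\phi_*L$; then $L=\phi^*L_1$ (as $L\cdot C=0$), and writing $K_X+B=\phi^*(K_{X_1}+B_1)+aC$ with $a>0$ (forced by $(K_X+B)\cdot C<0$ and $C^2<0$), one computes
\[
\phi^*\bigl(L_1-(K_{X_1}+B_1)\bigr)\;=\;A+aC.
\]
Using the identity $A\cdot C=-aC^2$ coming from $L\cdot C=0$, Nakai on $X_1$ reads: for every curve $C_1\subset X_1$ with strict transform $\tilde C_1\neq C$,
\[
(A+aC)\cdot\tilde C_1\;\ge\;A\cdot\tilde C_1\;>\;0,\qquad (A+aC)^2\;=\;A^2-a^2C^2\;>\;0.
\]
Hence $L_1-(K_{X_1}+B_1)$ is ample on the klt $\Q$-factorial surface $(X_1,B_1)$, and we are back in the Step~1 setup on the triple $(X_1,B_1,L_1)$ with strictly smaller Picard number.

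\emph{Step 3 (termination).} Since $\rho$ strictly drops at each iteration of Step~2, after finitely many contractions I reach a model $(X_n,B_n,L_n)$ on which $L_n\cdot C>0$ for every curve $C$. Combined with $L_n^2>0$ from bigness, Nakai--Moishezon makes $L_n$ ample, so $L$, being the pullback of $L_n$ along a composition of birational morphisms, is semi-ample.

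\textbf{Main obstacle.} The only delicate point is Step~2: verifying that the divisorial contraction of an $L$-trivial $K_X+B$-negative extremal ray preserves all the hypotheses simultaneously --- klt singularities, nefness and bigness of $L$, and above all ampleness of the positive part $A$ on the new surface. This reduces to the direct Nakai computation above, which exploits the negative self-intersection and the $K_X+B$-negativity of the contracted curve together with the relation $A\cdot C=-aC^2$.
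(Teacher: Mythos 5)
There is a genuine gap, and it sits exactly where you flag your ``main obstacle'' --- but it is not the Nakai computation, which is fine; it is the existence of the contraction itself. In Step 2 you contract an $L$-trivial, $K_X+B$-negative curve ``by the contraction theorem underlying \ref{t-lmmp-nonclosed-surf}''. In this paper that is circular: Theorem \ref{t-lmmp-nonclosed-surf} is proved \emph{after} and \emph{by means of} Proposition \ref{p-s-ample-big} (its Step 1 invokes \ref{p-s-ample-big} to produce each contraction), so you cannot feed it back into the proof of \ref{p-s-ample-big}. More substantively, over a possibly imperfect field of characteristic $p>0$ there is no off-the-shelf contraction theorem: to contract a single curve $C$ by a projective birational morphism with $\Q$-factorial projective target one must exhibit a nef and big divisor $N$ with $N^\perp\cap\NEbar(X)$ spanned by $C$ and prove that $N$ is \emph{semi-ample} --- which is precisely the kind of statement being proved. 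In characteristic $p$ this requires Keel's theorem [\ref{Ke99}] (semi-ampleness of $N$ reduces to semi-ampleness of $N|_C$) together with the key arithmetic input that $\Pic^0(C)=0$ when $\deg_k K_C<0$ (Lemma \ref{rc}), so that $N|_C\sim_\Q 0$; this is how the paper's Proposition \ref{p-contraction} constructs the contraction and its $\Q$-factorial target. Your proposal never supplies this input, so the engine driving your induction is assumed rather than proved. (Your aside that a Mori fibre space cannot occur, the klt and ampleness bookkeeping after a divisorial contraction, and the termination by dropping $\rho$ are all correct, and in characteristic zero the whole scheme works --- but there the statement is classical anyway.)

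For comparison, the paper's route also reduces to contracting the $L$-trivial locus, but it does so by first applying Keel's Theorem 1.9 to get a map to a proper \emph{algebraic space} $V$ contracting ${\bf{B_+}}(L)$, and then running an LMMP over $V$ built from Proposition \ref{p-contraction} (Keel plus Lemma \ref{rc}) to show that $V$ is in fact projective and $\Q$-factorial with $L_V$ ample. If you want to salvage your outline, you must replace the appeal to \ref{t-lmmp-nonclosed-surf} by a direct construction of each contraction: perturb so that $C$ is the unique $L'$-trivial curve for a suitable nef and big $L'$, check $\deg_k K_C<0$ via adjunction, conclude $L'|_C\sim_\Q 0$ from $\Pic^0(C)=0$, and then invoke [\ref{Ke99}] for semi-ampleness and again for $\Q$-factoriality of the target --- at which point you have essentially reproduced Proposition \ref{p-contraction} and the paper's argument.
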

\begin{proof}
 If char $k=0$, we can  pass to the algebraic closure of $k$ in which case the theorem is well-known. So
we will assume char $k>0$.

Since $L$ is nef and big, we can write $L\sim_\Q H+D$ where $H$ is ample and $D\ge 0$.
Moreover, we can assume $\Supp D={\bf{B_+}}(L)$, hence $L|_{\Supp D}\equiv 0$ [\ref{CMM}]. By [\ref{Ke99}, Theorem 1.9],
there is a
birational morphism $X\to V$ to a proper algebraic space $V$ which contracts exactly $D$
and that $L\equiv 0/V$.

Let $\phi\colon W\to X$ be a log resolution of $(X,B+D)$. Let $\Delta_W$ be the sum of the birational
transform of $B_V$ plus the reduced exceptional divisor of $W\to V$ where $B_V$ is the
pushdown of $B$ on $V$. Let $R_W$ be an ample
divisor on $W$ and let $L_W$ be the pullback of $L$. Also let $G=L-(K_X+B)$ and $G_W$ be its pullback.
Fix $m\gg 0$ and let $t$ be the smallest
number such that
$$
N_W:=K_W+\Delta_W+G_W+tR_W
$$
is nef. Note that by construction, $K_W+\Delta_W+G_W=L_W+E_W$ where $E_W\ge 0$ and its support is equal to the
exceptional locus of $W\to V$.
Moreover, $N_W$ is nef and big but not ample, so by [\ref{CMM}], there is a curve $S$ with $N_W\cdot S=0$.
Since $(K_W+\Delta_W+G_W)\cdot S<0$, $E_W\cdot S<0$, hence $S$ is a component of $E_W$
which is contracted over $V$, so it is a component of $\Delta_W$.
In addition, $t$ is a rational number and $(K_W+\Delta_W)\cdot S<0$. Therefore, by Proposition \ref{p-contraction},
$S$ can be contracted by a birational morphism $W\to W'$ with an induced morphism $W'\to V$.
Continuing this process gives an LMMP on $K_W+\Delta_W$ over $V$.
It terminates with some model $Y$ on which $K_Y+\Delta_Y$ is nef$/V$.

Since $K_Y+\Delta_Y\equiv E_Y/V$, $E_Y$ is nef over $V$ and since $E_Y$ is exceptional over $V$,
we deduce $E_Y=0$ by the negativity lemma (which holds over arbitrary fields).
Therefore, $Y=V$ because $E_Y$ contains all the exceptional curves of $Y\to V$.
Thus $V$ is projective and $\Q$-factorial. Now $L_V$ is ample
and it pulls back to $L$, hence $L$ is semi-ample.\\
\end{proof}

\begin{prop}\label{p-s-ample-v=1}
Let $(X,B)$ be a klt pair of dimension two projective over a field $k$ where
$B$ is a $\Q$-boundary. Assume $L$ is a nef $\Q$-divisor so that $L-(K_X+B)$ is nef and big, and
$L$ is not numerically trivial.
Then $L$ is semi-ample.
\end{prop}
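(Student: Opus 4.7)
The plan is to split into a big and non-big case. If $L$ is big, then since $L-(K_X+B)$ is in particular nef, Proposition \ref{p-s-ample-big} immediately gives semi-ampleness. So assume $L$ is not big; since $L$ is nef on a surface this forces $L^2=0$. Combined with $L\not\equiv 0$ this means $\kappa(L)\le 1$, and my goal becomes to show $\kappa(L)\ge 1$, after which Lemma \ref{l-s-ample-kappa=1} concludes.

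First I would establish that $L\cdot(K_X+B)<0$ via Hodge index on the normal projective surface $X$. Setting $H:=L-(K_X+B)$, which is nef and big so $H^2>0$, and noting $L\cdot H\ge 0$ since both are nef, I would argue that if $L\cdot H=0$ then Hodge index applied to the pair $(H,L)$ forces $L^2\le 0$ with equality only when $L\equiv 0$. Since $L$ is nef and not numerically trivial this is impossible, so $L\cdot H>0$, that is, $L\cdot(K_X+B)<L^2=0$.

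Next I would pass to the minimal resolution $\pi\colon\tilde X\to X$, which exists by [\ref{shaf}], and apply Riemann-Roch there. Writing $K_{\tilde X}+\tilde B=\pi^*(K_X+B)$, the $\pi$-exceptional components of $\tilde B$ are orthogonal to $\pi^*L$, so $\pi^*L\cdot\tilde B=\pi^*L\cdot\pi_{*}^{-1}B\ge 0$; subtracting from $\pi^*L\cdot\pi^*(K_X+B)=L\cdot(K_X+B)<0$ yields $\pi^*L\cdot K_{\tilde X}<0$. Since $(\pi^*L)^2=0$, Riemann-Roch on the regular surface $\tilde X$ (see \ref{ss-intersection}) gives
\[
\chi(m\pi^*L)=-\tfrac{m}{2}\pi^*L\cdot K_{\tilde X}+\chi(\mathcal{O}_{\tilde X}),
\]
which grows linearly in $m$. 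Serre duality on $\tilde X$ identifies $h^2(m\pi^*L)$ with $h^0(K_{\tilde X}-m\pi^*L)$, which vanishes for $m\gg 0$ because $\pi^*L$, being nef and numerically nonzero, has strictly positive intersection with any ample divisor. Hence $h^0(X,mL)=h^0(\tilde X,m\pi^*L)$ grows at least linearly in $m$, so $\kappa(L)\ge 1$ and Lemma \ref{l-s-ample-kappa=1} concludes.

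The main subtlety I anticipate is intersection theory on a singular, possibly non-$\Q$-Gorenstein surface over a non-closed field: since $K_X$ need not be $\Q$-Cartier for a klt pair, I deliberately never write $L\cdot K_X$ on $X$ itself and perform all Riemann-Roch and Serre duality computations on $\tilde X$, where regularity makes [\ref{Liu}] and the Riemann-Roch formula of \ref{ss-intersection} apply directly; the Hodge index step, in contrast, only uses the $\Q$-Cartier divisors $L$ and $H$ and causes no difficulty.
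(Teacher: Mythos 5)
Your proof is correct and follows essentially the same route as the paper: reduce to the non-big case via Proposition \ref{p-s-ample-big}, pass to the minimal resolution, use Riemann--Roch together with the vanishing $h^2(m\pi^*L)=h^0(K_{\tilde X}-m\pi^*L)=0$ to get many sections and hence $\kappa(L)=1$, and conclude with Lemma \ref{l-s-ample-kappa=1}. The only (harmless) divergence is that you obtain $L\cdot\bigl(L-(K_X+B)\bigr)>0$ via the Hodge index theorem, whereas the paper gets it directly by writing the nef and big divisor $G=L-(K_X+B)$ as an ample plus an effective $\Q$-divisor, so that $L\cdot G>0$ follows from $L$ being nef and numerically nontrivial.
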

\begin{proof}
By Proposition \ref{p-s-ample-big}, we can assume $L$ is not big.  Moreover, replacing
$X$ with its minimal resolution we can assume $X$ is regular.
Let $G:=L-(K_X+B)$.
By the Riemann-Roch theorem for regular surfaces (see \ref{ss-intersection}), for any sufficiently divisible natural number
$m$ we have
$$
\mathcal{X}(mL)=\frac{1}{2}mL\cdot (mL-K_X)+\mathcal{X}(0)
$$
Since $G$ is big and $L$ is not numerically trivial, $L\cdot G>0$, and since
$$
mL-K_X\sim_\Q (m-1)L+B+G
$$
$L\cdot (mL-K_X)>0$, hence $\mathcal{X}(mL)$
is large when $m$ is large. This implies $h^0(mL)\ge 2$ for such $m$ because
$h^2(mL)=h^0(K_X-mL)=0$. Therefore, $L$ is semi-ample by Lemma \ref{l-s-ample-kappa=1}.\\
\end{proof}

\subsection{Running the LMMP}

\begin{proof}(of Theorem \ref{t-lmmp-nonclosed-surf})
\emph{Step 1}. Assume $K_X+B$ is pseudo-effective but
not nef. First suppose $X$ is $\Q$-factorial (we will see in the next step that this is
automatically satisfied).
Let $H$ be an ample divisor on $X$ and let $t$ be the smallest number such that $L=K_X+B+tH$ is nef.
Obviously $L$ is nef and big. Moreover, $t$ is rational which can be seen as in the proof of Proposition \ref{p-contraction}.
Although we can apply Proposition
\ref{p-s-ample-big} to deduce that $L$ is semi-ample and defines a contraction but
we want to modify the situation so that the contraction contracts only one curve.
Pick a curve $C$ such that $L\cdot C=0$. Let $\Delta=(1-\delta)B+\epsilon C$
for certain small rational numbers $\epsilon,\delta>0$ so that $(X,\Delta)$ is klt,
$(K_X+\Delta)\cdot C<0$, and $\delta B+tH$ is ample. Now let $t'$ be the smallest number such that
$L':=K_X+\Delta+\delta B+t'H$ is nef. Then $t<t'$ because $C^2<0$, so $L'$ is nef and big, and $\delta B+t'H$ is ample.
Note that $C$ is the only curve satisfying $L'\cdot C=0$.

Now by Proposition \ref{p-s-ample-big}, $L'$ is semi-ample and it defines a non-trivial birational
contraction $X\to Y$ contracting $C$ with $(K_X+B)\cdot C<0$. Let $R_Y$ be a prime divisor on $Y$ 
and $R$ its birational transform on $X$. Let $r$ be the number such that $(R+rC)\cdot C=0$. 
If $m>0$ is sufficiently large, then $L'':=mL'+R+rC$ is nef and big. Moreover, 
applying \ref{p-s-ample-big} to $L'+L''$ shows that $L''$ is the pullback of an 
ample divisor on $Y$, hence $R_Y$ is $\Q$-Cartier. 
Therefore, $Y$ is $\Q$-factorial and $(Y,B_Y)$ is dlt. Now replace $(X,B)$ with $(Y,B_Y)$
and repeat the argument.\\

\emph{Step 2.}
In this step we show that the dlt property of $(X,B)$ implies $X$ is $\Q$-factorial.
Since the pair is dlt, there is a log resolution $\phi\colon W\to X$ such that the log discrepancy
$a(D,X,B)>0$ for every curve $D$ contracted by $\phi$.
Let $\Gamma_W$ on $W$ be the sum of the birational
transform of $B$ and the reduced exceptional divisor of $W\to X$. Then $K_W+\Gamma_W=\phi^*(K_X+B)+E_W$
where $E_W\ge 0$ is contracted over $X$.
Let $H_W$ be the pullback of the ample divisor $H$. Fix $m\gg 0$. Then applying a similar procedure as
above we can run an LMMP on $K_W+\Gamma_W+mH_W$. Since $m\gg 0$, the curves $C$ contracted by the LMMP
intersect $H_W$ trivially, by Proposition \ref{p-contraction}, so such curves are contracted by $\phi$. In other words,
the LMMP is over $X$. The LMMP contracts $E_W$ so it ends with $X$ which means $X$ is $\Q$-factorial.
This and the previous step together prove the theorem when $K_X+B$ is pseudo-effective.\\

\emph{Step 3.}
From now on we assume $K_X+B$ is not pseudo-effective. If there is a curve $C$ such that
$(K_X+B)\cdot C<0$ and such that there is a birational morphism $X\to Y$ contracting exactly $C$,
then we replace $(X,B)$ with $(Y,B_Y)$. So we can assume there is no such $C$.

Pick an ample divisor $A$
and let $t$ be the smallest number such that $K_X+B+tA$ is pseudo-effective.
By the last paragraph $K_X+B+tA$ is nef: otherwise $K_X+B+t'A$ is not nef for a rational number
$t'>t$ sufficiently close to $t$, so we can run an LMMP on $K_X+B+t'A$ which is also an
LMMP on $K_X+B$ contracting some $C$, a contradiction.

If $\rho(X)=1$, then we already have a Mori fibre space. So assume $\rho(X)>1$.
Then there is another ample divisor $H$ such that $A$ is not numerically equivalent to
$hH$ for any number $h$. Let $s$ be the smallest number such that $K_X+B+sH$ is pseudo-effective.
 Arguing as above, $K_X+B+sH$ is nef.
By our choice of $A$ and $H$ both $K_X+B+tA$ and $K_X+B+sH$ cannot be numerically trivial at the same time.
We may assume $K_X+B+tA$ is not numerically trivial.\\

\emph{Step 4}.
In this step we assume $t$ is a rational number.
By Proposition \ref{p-s-ample-v=1}, $K_X+B+tA$ is semi-ample defining a contraction $f\colon X\to Z$ onto
$Z$ of dimension one. Assume there is a fibre $F$ of $f$ which is not irreducible. Let $C$ be a
component of $F$. Then $C^2<0$. We can find a $\Q$-boundary $\Delta$ such that $(X,\Delta)$ is klt,
$K_X+\Delta$ is pseudo-effective, and $(K_X+\Delta)\cdot C<0$. So we can contract $C$.
But since $(K_X+B)\cdot C<0$, this contradicts the first paragraph of Step 3.
Therefore, we can assume all the fibres of $f$ are irreducible. But this means $f$
is extremal and so $f$ is a Mori fibre space.\\

\emph{Step 5.}
Finally we show $t$ is indeed a rational number. Assume not. We derive a contradiction.
Let $L=K_X+B+tA$. For each sufficiently divisible natural number $m$, let $a_m$ be the number so that
$\rddown{mL}=mL-a_mA$. Since $t$ is not rational, there is an infinite set $\Pi$ of such $m$
so that the $a_m$ form a strictly decreasing sequence with $\lim_{m\in \Pi} a_m=0$. On the other hand,
for each $m\in\Pi$, let $a_m'$ be the number so that
$$
(mL-a_m'A)\cdot ({(m-1)L}+B+(t-a_m)A)
$$
$$
=(mL-a_m'A)\cdot (\rddown{mL}-K_X)=0
$$
Since $\lim_{m\in \Pi} a_m=0$ and $L^2=0$, we can see
$$
\lim_{m\in \Pi} a_m'=\lim_{m\in \Pi} \frac{mL\cdot ({(m-1)L}+B+(t-a_m)A)}{A\cdot ({(m-1)L}+B+(t-a_m)A)}=\frac{L\cdot (B+tA)}{A\cdot L}>0
$$
Thus we can assume
$$
(mL-a_mA)\cdot (\rddown{mL}-K_X)>\tau (m-1) A\cdot L
$$
for some $\tau>0$ independent of $m$.
Therefore,
$$
\mathcal{X}(\rddown{mL})=\frac{1}{2}(mL-a_mA)\cdot (\rddown{mL}-K_X)+\mathcal{X}(0)
$$
is large when $m\in \Pi$ is large. This in turn implies $h^0(\rddown{mL})$ is large for such $m$,
in particular, $mL\sim_\Q M$ for some $M\ge 0$. If $C$ is a component of $M$, then $L\cdot C=0$
which means $t$ is a rational number, a contradiction.\\
\end{proof}

\subsection{Mori fibre spaces}

\begin{prop}\label{p-Mfs}
{Let $(X,B)$} be a dlt pair of dimension two projective over a field $k$.
Assume $f\colon X\to Z$ is a Mori fibre structure for $(X,B)$ where $\dim Z=1$.
Then the geometric general fibres of $f$ are conics and if char $k>2$ they are
smooth rational curves.
In particular, if $F$ is a general fibre, then $(K_X+B)\cdot F\ge -2$.
\end{prop}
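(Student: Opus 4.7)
The plan is to reduce to Lemma~\ref{rc} applied to a general fibre of $f$. Pick a very general closed point $z\in Z$ and let $F = f^{-1}(z)$ with its scheme-theoretic structure. Since $X$ is a normal surface its non-regular locus is a finite set of closed points, so for $z$ general $F$ is contained in the regular locus of $X$; being a principal Cartier divisor in a regular scheme, $F$ is a local complete intersection. Since $f$ is a contraction we have $f_\ast\mathcal{O}_X = \mathcal{O}_Z$, so the generic fibre $X_\eta$ is connected, and by Lemma~\ref{l-gen-fib} it is normal, hence integral; spreading out from the generic fibre then shows general closed fibres $F$ are integral as well.

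Next I would use adjunction. Since $F = f^\ast z$ for a regular point $z\in Z$, the conormal bundle $\mathcal{O}_X(-F)|_F$ is trivial, so $\omega_F \cong \omega_X|_F$ and hence $K_F = K_X|_F$. The Mori fibre space hypothesis makes $-(K_X+B)$ ample over $Z$, so $(K_X+B)\cdot F < 0$, and combined with $B \ge 0$ this forces $\deg K_F = K_X \cdot F < 0$. Lemma~\ref{rc} therefore applies to $F$: setting $l := H^0(F,\mathcal{O}_F)$, part~(ii) gives that $F$ is a conic over $l$ with $\deg_l K_F = -2$, and part~(iii) gives $F_{\bar l}\cong\mathbb{P}^1_{\bar l}$ when $\mathrm{char}\,k>2$, proving the first two assertions. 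For the final inequality, adjunction yields $(K_X+B)|_F = K_F + B|_F$ with $B|_F\ge 0$ (for general $z$ the fibre $F$ avoids the vertical components of $B$ and meets the horizontal ones in an effective cycle), so $(K_X+B)\cdot F = \deg K_F + \deg B|_F \ge -2$, where degrees are taken over $l$ as in the last line of the proof of Proposition~\ref{p-contraction}.

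The delicate point, and the main obstacle, is the field-of-definition bookkeeping: one must confirm that a general closed fibre $F$ is integral (not merely connected and reduced) and lci, and pin down that $K_F=K_X|_F$ holds at the level of Cartier divisors so that the conclusions of Lemma~\ref{rc} transfer directly to the stated inequality with the correct normalization over $l$. Once these technicalities are dispatched, the argument is essentially a one-line invocation of Lemma~\ref{rc}.
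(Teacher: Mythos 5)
Your route differs from the paper's, and it contains a genuine gap. The paper applies Lemma \ref{rc} directly to the generic fibre $X_\eta$ over the function field $K=k(Z)$: by Lemma \ref{l-gen-fib} this is integral and normal, hence a regular (so lci) curve, with $H^0(\mathcal{O}_{X_\eta})=K$ because $f$ is a contraction and with $-K_{X_\eta}$ ample; all assertions are then read off at the generic point. You instead work with a general closed fibre, and the problematic step is the claim that ``spreading out from the generic fibre shows general closed fibres $F$ are integral as well.'' What spreads out from the generic point is \emph{geometric} integrality (geometric reducedness and irreducibility), not integrality itself, and in the present situation the generic fibre need not be geometrically integral: in characteristic $2$ a regular conic over the imperfect field $K$ can become a double line after base change to $\bar K$ (wild conic bundles), and then the general closed fibres are non-reduced. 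This is precisely why the proposition only asserts that the geometric fibres are conics in general and reserves ``smooth rational curve'' for char $k>2$. Since Lemma \ref{rc} requires an integral lci curve, your application of it to the closed fibre $F$ fails exactly in this case; and even in char $>2$, to know that integrality passes to general closed fibres you would first have to establish geometric integrality of $X_\eta$, which already amounts to applying Lemma \ref{rc} to the generic fibre -- at which point the detour through closed fibres is unnecessary.

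The repair is the paper's argument: apply Lemma \ref{rc} to $X_\eta$ ($l=H^0(\mathcal{O}_{X_\eta})=K$ since $f_*\mathcal{O}_X=\mathcal{O}_Z$; $\deg_K K_{X_\eta}<0$ since $-(K_X+B)$ is $f$-ample and $B_\eta\ge 0$), concluding that $X_\eta$ is a conic over $K$ with $\deg_K K_{X_\eta}=-2$ and geometrically $\PP^1$ when char $>2$; the statements for general (geometric) fibres and the inequality then follow by constancy of the fibre degree in the flat family, with no integrality of closed fibres needed. A smaller point, which mirrors a looseness already present in the paper: in your final inequality the field of the degree matters -- for a closed fibre the intersection number computed over $k$ acquires a factor $[H^0(\mathcal{O}_F):k]$, so the bound $-2$ should be understood over $l=H^0(\mathcal{O}_F)$ (or over $K$ for the generic fibre), which is the form actually used later in the proof of Theorem \ref{t-abund-nonclosed-surf}.
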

\begin{proof}
Let $F$ be the generic fibre of $f$ which is a regular curve by Lemma \ref{l-gen-fib}.
Since $-(K_X+B)$ is ample over $Z$,
$-K_F$ is ample. On the other hand, since $f$ is a contraction,
$H^0(\mathcal{O}_F)=K$ where $K$ is the function field of $Z$.
The assertions follow from Lemma \ref{rc} straightforwardly.\\
\end{proof}

\subsection{Curves of canonical type}
Let $X$ be a regular surface projective over a field $k$.
A connected divisor $D=\sum_1^r d_iD_i\ge 0$ is called a curve of canonical type if $D|_D\equiv 0$ and $K_X|_D\equiv 0$.
It is called indecomposable if there is no prime number dividing all the $d_i$.
The following result was proved by Mumford [\ref{Mum}, page 332]. Although he assumes the ground field to be
algebraically closed but his proof works for arbitrary fields. We give the proof for convenience
(see also [\ref{Bad}, Theorem 7.8]).

\begin{prop}\label{p-can-curve-lin-0}
Let $D$ be an indecomposable curve of canonical type.
Let $L$ be a Cartier divisor on $D$ such that $L\equiv 0$.
If $h^0({L})\neq 0$, then $L\sim 0$.
\end{prop}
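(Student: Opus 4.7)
The plan is to show that any nonzero section $s \in H^0(D, L)$ trivializes $L$, whence $L \cong \mathcal{O}_D$ and $L \sim 0$. The only non-formal input I will need is the numerical $1$-connectedness of $D$: using $D \cdot D_i = 0$ for all $i$ (from $D|_D \equiv 0$), connectedness of the dual graph of the $D_i$'s, and Zariski's lemma, the intersection matrix $(D_i \cdot D_j)$ is negative semi-definite with kernel generated over $\Q$ by $(d_1, \ldots, d_r)$. Indecomposability ($\gcd d_i = 1$) then implies that the only integer subdivisors of $D$ proportional to $D$ are $0$ and $D$, so every effective integer subdivisor with $0 < A < D$ satisfies $A^2 < 0$, equivalently $A \cdot (D - A) \geq 1$.

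I would then introduce the vanishing orders $e_i \in \{0, 1, \ldots, d_i\}$ of $s$ at the generic point of each $D_i$ and set $E = \sum e_i D_i$. Since $D$ is a Cartier divisor on the regular surface $X$, $\mathcal{O}_D$ is Cohen--Macaulay, so $L$ has no embedded associated primes; hence $s \neq 0$ forces some $e_i < d_i$, i.e.\ $D - E \neq 0$. The ideal sheaf of $E$ in $D$ is $\mathcal{O}_{D-E}(-E)$, giving
$$
0 \longrightarrow L \otimes \mathcal{O}_{D-E}(-E) \longrightarrow L \longrightarrow L|_E \longrightarrow 0,
$$
from which $s$ produces a section $s' \in H^0(D - E, \, L \otimes \mathcal{O}_{D-E}(-E))$ that, by the choice of the $e_i$, is nonzero at the generic point of every component of $D - E$. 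For each such component $D_j$, the restriction $s'|_{D_j}$ is a nonzero section of a line bundle on the integral projective curve $D_j$ of degree $L \cdot D_j - E \cdot D_j = -E \cdot D_j$, whence $E \cdot D_j \leq 0$ and $(D - E) \cdot D_j \geq 0$. Summing over components of $D-E$ yields $(D - E)^2 \geq 0$; combined with $1$-connectedness this forces $E = 0$ or $E = D$, and the latter is excluded because it would give $s = 0$.

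With $E = 0$, each restriction $s|_{D_i}$ is a nonzero section of a degree $0$ line bundle on the integral projective curve $D_i$; since an effective divisor of degree zero on such a curve must be zero, $s|_{D_i}$ is nowhere vanishing and generates $L|_{D_i}$ at every point. Thus $s$ generates $L$ everywhere on $D_{\mathrm{red}}$, and since the ideal of $D_{\mathrm{red}}$ in $D$ is nilpotent, Nakayama's lemma lifts this to the statement that $s$ generates $L$ everywhere on $D$, so the map $\mathcal{O}_D \to L$ sending $1 \mapsto s$ is an isomorphism. The main technical hurdle is extracting the residual section $s'$ cleanly when $D$ is non-reduced; once the Cohen--Macaulay property of $\mathcal{O}_D$ is invoked to produce the displayed short exact sequence, the rest of the argument is the intersection-theoretic balance between the bound $(D-E)^2 \geq 0$ coming from the existence of $s'|_{D_j}$ and the strict negativity forced by $1$-connectedness.
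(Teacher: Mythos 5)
Your argument is correct, and it reaches the conclusion by a route whose bookkeeping differs from the paper's even though the engine is the same: vanishing orders along the components, a residual section, and intersection theory combined with indecomposability. The paper (following Mumford) works one component at a time: it restricts $\alpha$ to the thickenings $n_1D_1\subset (n_1+1)D_1$ of a single component with $n_1<d_1$, extracts a residual section $\beta$ of $(L-n_1D_1)|_{D_1}$, bounds its zero divisor from below by a local length computation, deduces $\deg N|_{D_1}\le 0$ (so $N$ is anti-nef), and finishes with the explicit minimality argument (the smallest $a$ with $aN-D\ge 0$) plus $\gcd d_i=1$. You instead split off the whole divisor $E=\sum e_iD_i$ at once via the decomposition sequence $0\to L\otimes\mathcal{O}_{D-E}(-E)\to L\to L|_E\to 0$, restrict the residual section to each component of $D-E$ to get $(D-E)\cdot D_j\ge 0$, and invoke numerical $1$-connectedness (Zariski's lemma for the matrix $(D_i\cdot D_j)$ together with $\gcd d_i=1$) to force $E=0$ or $E=D$; this is the standard ``$1$-connected divisor'' formulation, and it buys a cleaner numerical endgame at the cost of establishing semi-definiteness and the kernel description up front, whereas Mumford's version only ever needs a degree computation on one curve. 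The step you flag as the technical hurdle --- that vanishing to order $e_i$ at the generic point of each $D_i$ really places $s$ in the subsheaf $L\otimes\mathcal{O}_{D-E}(-E)$, and that the residual section is nonzero on each reduced component of $D-E$ --- is precisely the local computation the paper carries out: in the regular local ring $\mathcal{O}_{X,v}$ the ideal $(f_i^{e_i})$ is $f_i$-primary, so membership can be tested at the generic points and a local representative $\alpha$ of $s$ lies in $\bigcap_i(f_i^{e_i})=(\prod_i f_i^{e_i})$; with that spelled out (and with the Cohen--Macaulay/no-embedded-points remark you already make to rule out $E=D$), your proof is complete.
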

\begin{proof}
Assume $\alpha\in H^0(L)$ is nonzero. Then $\alpha|_{D_i}$ is either nowhere
vanishing or everywhere vanishing because $L|_{D_i}\equiv 0$. Since $D$ is connected, either $\alpha$ is
nowhere vanishing on $D$ or $\alpha|_{\Supp D}=0$. The former implies $L$ is generated by global
sections which in turn implies $L\sim 0$.
So it is enough to treat the latter. Let $n_i$ be the order of vanishing
of $\alpha$ along $D_i$. Let $N=\sum n_iD_i$. We want to show $N=D$.

Assume $n_i<d_i$, say for $i=1$. Consider the exact sequence
$$
0\to \mathcal{O}_{D_1}(L-n_1D_1) \to \mathcal{O}_{(n_1+1)D_1}(L) \to \mathcal{O}_{n_1D_1}(L)\to 0
$$
Since $\alpha|_{n_1D_1}=0$ by definition of $n_1$, the section $\alpha|_{(n_1+1)D_1}$
is the image of a section $\beta$ of
$(L-n_1D_1)|_{D_1}$. If $P$ is the zero divisor of $\beta$, then
a local computation of intersection numbers shows that $P\ge (N-n_1D_1)|_{D_1}$.
More precisely, let $v\in D_1$ be a closed point, let $R=\mathcal{O}_{X,v}$, and let $f_i$ be a local equation of $D_i$
near $v$. Then locally considering $\alpha$ as an element of $\frac{R}{\langle f_1^{d_1}\cdots f_r^{d_r}\rangle}$,
it is easy to see that $\alpha$ is represented by $\lambda f_1^{n_1}\cdots f_r^{n_r}$ for some $\lambda\in R$, and that
$\beta$ is represented by $\lambda f_2^{n_2}\cdots f_r^{n_r}$ which gives the equation of $P$ near $v$.
Therefore, from
$$
{\rm length}_{\frac{R}{\langle f_1\rangle}} \frac{R}{\langle f_1,\lambda f_2^{n_2}\cdots f_r^{n_r}\rangle}\ge {\rm length}_R \frac{R}{\langle f_1,f_2^{n_2}\cdots f_r^{n_r}\rangle}
$$
we deduce that locally near $v$ we have $P\ge (N-n_1D_1)|_{D_1}$ because the left hand side  of the displayed formula
is the coefficient of
$v$ in $P$ and the right hand side is nothing but the local intersection number $(N-n_1D_1)\cdot D_1$ at $v$ which is in turn
equal to the coefficient of $v$ in $(N-n_1D_1)|_{D_1}$. As $P\sim (L-n_1D_1)|_{D_1}$, we deduce that
$\deg N|_{D_1}\le 0$. Thus $N$ is anti-nef. Letting $a$ be the smallest number such that $aN-D\ge 0$ and
taking intersection numbers one shows $D=aN$. Since the $d_i$ have no common factor, $N=D$. So $\alpha=0$,
a contradiction.\\
\end{proof}

\begin{prop}
Let $D$ be an indecomposable curve of canonical type.
Then the arithmetic genus $p_a(D)=1$ and $K_D\sim 0$.
\end{prop}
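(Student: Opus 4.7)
The plan is to reduce everything to the previous proposition via adjunction and Riemann--Roch. Since $X$ is regular, the effective divisor $D=\sum d_iD_i$ is a Cartier divisor and hence $D$, viewed as a subscheme of $X$, is a local complete intersection. Therefore the dualizing sheaf $\omega_D$ is a line bundle, and the standard adjunction formula gives $\omega_D\cong \omega_X(D)|_D$, so $K_D$ is represented by the Cartier divisor class $(K_X+D)|_D$ on $D$.

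Next I would compute the degree of $K_D$ using the hypotheses. Since $K_X|_D\equiv 0$ and $D|_D\equiv 0$, we have $K_X\cdot D_i=0$ and $D\cdot D_i=0$ for every component $D_i$, hence $K_X\cdot D=0$ and $D^2=0$. Consequently $K_D\equiv 0$ on $D$ as a numerical class. The arithmetic genus formula on the regular surface $X$ then gives
$$
2p_a(D)-2 \;=\; (K_X+D)\cdot D \;=\; 0,
$$
so $p_a(D)=1$, which is the first claim.

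For the second claim I need $K_D\sim 0$, not just numerical triviality. Since $p_a(D)=1-\chi(\mathcal{O}_D)$, we have $\chi(\mathcal{O}_D)=0$, i.e.\ $h^0(\mathcal{O}_D)=h^1(\mathcal{O}_D)$. Because $D$ is a projective local complete intersection (so Cohen--Macaulay), Serre duality applies and yields $h^1(\mathcal{O}_D)=h^0(\omega_D)=h^0(K_D)$. Since $h^0(\mathcal{O}_D)\ge 1$, we conclude $h^0(K_D)\ge 1$. Now $K_D$ is a Cartier divisor on $D$ with $K_D\equiv 0$ and a nonzero global section, and $D$ is an indecomposable curve of canonical type, so Proposition \ref{p-can-curve-lin-0} immediately gives $K_D\sim 0$.

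The argument is mostly bookkeeping; the one delicate point is ensuring that adjunction is available in our setting (arbitrary ground field $k$, possibly non-reduced $D$). This is handled by the regularity of $X$, which makes $D$ a Cartier divisor and hence a local complete intersection, so $\omega_D$ is invertible and Serre duality applies. Once that is in place, the numerical triviality of $K_D$ and the arithmetic genus are forced by the two defining conditions of a curve of canonical type, and the transition from numerical to linear triviality is precisely what the preceding proposition was established for.
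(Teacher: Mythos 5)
Your proof is correct and follows essentially the same route as the paper: compute $\deg_k K_D=(K_X+D)\cdot D=0$ to get $p_a(D)=1$, hence $\chi(\mathcal{O}_D)=0$, use duality to get $h^0(K_D)=h^1(\mathcal{O}_D)=h^0(\mathcal{O}_D)>0$, and conclude $K_D\sim 0$ from Proposition \ref{p-can-curve-lin-0}. The extra care you take to justify adjunction and Serre duality (via $D$ being a Cartier divisor on the regular surface $X$, hence a local complete intersection) is exactly what the paper leaves implicit.
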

\begin{proof}
By definition of curves of canonical type $K_D=(K_X+D)|_D\equiv 0$.
By [\ref{Liu}, Chapter 7, Corollary 3.31], $0=\deg_k K_D=2(p_a(D)-1)$.
Thus $p_a(D)=1$ which means $\mathcal{X}(\mathcal{O}_D)=0$, hence $h^1(\mathcal{O}_D)=h^0(\mathcal{O}_D)>0$.
So by duality $h^0(K_D)=h^1(\mathcal{O}_D)>0$ which implies $K_D\sim 0$ by Proposition \ref{p-can-curve-lin-0}.\\
\end{proof}

\begin{prop}\label{p-s-ample-can-type}
Assume char $k>0$. Let $D$ be an indecomposable curve of canonical type such that $D|_D$ is torsion. Then $D$ is semi-ample on $X$.
\end{prop}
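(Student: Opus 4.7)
Since $\mathcal{O}_D(D)$ is torsion, fix $m \geq 1$ with $\mathcal{O}_D(mD) \cong \mathcal{O}_D$ and a nowhere-vanishing section $s \in H^0(D, \mathcal{O}_D(mD))$. The aim is to produce, for some $N \geq 1$, a section $\tilde s \in H^0(X, \mathcal{O}_X(NmD))$ that does not vanish at any point of $D$. Since $NmD$ itself is an effective divisor on $X$, the base locus $\mathrm{Bs}|NmD|$ is automatically contained in $\mathrm{Supp}(D)$, so such a $\tilde s$ would force $|NmD|$ to be base-point-free and hence prove the semi-ampleness of $D$.

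To construct $\tilde s$, the plan is to lift $s$ successively through the infinitesimal thickenings $nD$ via the short exact sequences
$$0 \to \mathcal{O}_D((m-n)D) \to \mathcal{O}_{(n+1)D}(mD) \to \mathcal{O}_{nD}(mD) \to 0,$$
whose lifting obstruction lives in $H^1(D, \mathcal{O}_D((m-n)D))$. Using $K_D \sim 0$ from the previous proposition, Serre duality on the Gorenstein curve $D$ identifies this with $H^0(D, \mathcal{O}_D((n-m)D))^\vee$, and Proposition \ref{p-can-curve-lin-0} forces the latter to vanish unless $\mathcal{O}_D((n-m)D) \cong \mathcal{O}_D$, that is, unless $m \mid n$. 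Hence the lift is unobstructed at every level with $m \nmid n$, and only the levels $n = km$ pose an obstruction, which then lies in $H^1(D, \mathcal{O}_D)$.

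To kill the residual obstructions at $n = km$, the plan is to exploit $\mathrm{char}\,k > 0$ via the absolute Frobenius: replacing $(m, s)$ by $(p^e m, s^{p^e})$ converts the obstruction classes by the $p$-linear Frobenius action on $H^1(D, \mathcal{O}_D)$. Since these classes arise from the torsion element $\mathcal{O}_D(D) \in \mathrm{Pic}^0(D)$, iterated Frobenius trivializes them in the direct limit---this is precisely the Keel--Totaro mechanism [\ref{Ke99}, \ref{tot09}] that enables nef divisors to be semi-ample in positive characteristic. Once all obstructions vanish, the compatible liftings assemble into a section of $\mathcal{O}_X(Np^e m D)$ on the formal completion $\widehat{X}_D$, which algebraizes to a genuine global section $\tilde s$ by the theorem on formal functions; by construction $\tilde s|_D$ is a nonzero power of $s$ and is therefore nowhere vanishing on $D$.

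The main obstacle is the Frobenius-annihilation step. The space $H^1(D, \mathcal{O}_D)$ is typically nonzero (it has dimension $h^0(\mathcal{O}_D) \geq 1$ because $p_a(D) = 1$), and one must verify that the particular obstruction classes produced by our liftings land in the nilpotent part of the semisimple-plus-nilpotent decomposition of the $p$-linear Frobenius action. This is exactly the point at which $\mathrm{char}\,k > 0$ is indispensable: in characteristic zero no such trivialization of torsion-type obstructions is available, and the analogous conclusion would require entirely different techniques.
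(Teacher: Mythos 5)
Your reduction and the lifting set-up are fine, and your computation of where the obstructions can live (vanishing of $H^1(D,\mathcal{O}_D((m-n)D))$ except when $(n-m)D|_D$ is trivial, via Proposition \ref{p-can-curve-lin-0}) matches the paper's inductive step. But the two steps you yourself flag as the crux are genuine gaps, not technicalities. First, the Frobenius-annihilation step is not a valid argument as stated: the residual obstructions lie in $H^1(D,\mathcal{O}_D)$, and the $p$-linear Frobenius action on this group can be bijective (e.g.\ when $D$ is an ordinary elliptic curve), so classes ``arising from a torsion line bundle'' are not killed in the direct limit; you would have to prove that your specific classes are nilpotent under Frobenius, and nothing in your construction does this. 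The actual char~$p$ mechanism is different: once $\mathcal{O}_{rD}(rD)$ is trivial ($r$ the order of $D|_D$), pulling back the trivializing section by the $e$-th power of the absolute Frobenius automatically trivializes $\mathcal{O}_{p^erD}(p^erD)$, because the scheme-theoretic Frobenius preimage of $rD$ is $p^erD$ --- no obstruction group ever has to be killed at those levels. Second, even granting compatible liftings at every finite level, the passage from a section on the formal completion $\widehat{X}_D$ to a genuine section of $\mathcal{O}_X(NmD)$ is unjustified: the theorem on formal functions does not apply here (there is no proper morphism contracting $D$ yet --- producing one is essentially what is being proved), and sections of a line bundle on the formal neighbourhood of a divisor do not in general algebraize. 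This is exactly why Mumford, Keel and Totaro argue differently, either by estimating $h^0(X,kD)$ directly via Riemann--Roch on the surface or by Keel-type semi-ampleness criteria.

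For comparison, the paper's proof avoids both issues: it runs the lifting induction only finitely many times, from $lD$ to $(l+1)D$ for $0<l<r$, where $h^0(\mathcal{O}_D(rD-lD))=0$ by Proposition \ref{p-can-curve-lin-0} and $\chi(\mathcal{O}_D(rD-lD))=0$ by Riemann--Roch force $h^1=0$, concluding $rD|_{rD}\sim 0$; it then invokes [\ref{tot09}, Lemma 4.1], which packages the Frobenius argument above and yields semi-ampleness of $D$ in characteristic $p>0$. Your proposal would become correct if you stop the induction at level $r$ (where no nonzero obstruction group ever appears) and cite Totaro's lemma, rather than attempting the infinite tower and formal algebraization by hand.
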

\begin{proof}
Let $r$ be the order of $D|_D$ in $\Pic(D)$. First we want to show $rD|_{rD}\sim 0$.
This is trivially true if $r=1$, so assume $r>1$.
Assume we already know $rD|_{lD}\sim 0$ for some $0<l<r$. Consider the exact sequence
$$
0\to \mathcal{O}_{D}(rD-lD) \to \mathcal{O}_{(l+1)D}(rD) \to \mathcal{O}_{lD}(rD)\to 0
$$
Now $h^0(\mathcal{O}_{D}(rD-lD))=0$ by Proposition \ref{p-can-curve-lin-0}, and since $\mathcal{X}(\mathcal{O}_D)=0$,
by Riemann-Roch we get
$$
\mathcal{X}(\mathcal{O}_D(rD-lD))=\deg_k (rD-lD)|_D+\mathcal{X}(\mathcal{O}_D)=0
$$
which implies $h^1(\mathcal{O}_{D}(rD-lD))=0$. So any nowhere vanishing section of
$\mathcal{O}_{lD}(rD)$ lifts to $\mathcal{O}_{(l+1)D}(rD)$ which shows $rD|_{(l+1)D}\sim 0$.
Inductively one shows $rD|_{rD}\sim 0$.
Finally applying [\ref{tot09}, Lemma 4.1], we deduce $D$ is semi-ample.\\
\end{proof}

\subsection{Abundance}

\begin{proof}(of Theorem \ref{t-abund-nonclosed-surf})
We can assume $K_X+B$ is not big by Proposition \ref{p-s-ample-big}.
Replacing $X$ with its minimal resolution we can assume $X$ is regular.
By assumption $m(K_X+B)\sim M$ for some $m>0$ and $M\ge 0$. Let $n$ be a sufficiently large
natural number. We can run an LMMP on $K_X+n M$ because
$$
K_X+n M\sim (1+nm)(K_X+\frac{nm}{1+nm}B)
$$
and because $(X,\frac{nm}{1+nm}B)$ is klt.
Moreover, we claim that $M$ is numerically trivial on each step of the LMMP and
the nefness of $M$ is preserved in the process. Indeed assume the first step  of the LMMP is a
birational contraction $X\to Z$ contracting a curve $E$. Then $\deg_k K_E=(K_X+E)\cdot E<0$, hence
by Lemma \ref{rc}, if setting $l=H^0(\mathcal{O}_E)$ then
$$
-2=\deg_lK_E=\deg_l(K_X+E)|_E
$$
which implies $\deg_l K_X|_E=-1$. Thus from $\deg_l(K_X+nM)|_E<0$ we deduce $M\cdot E=0$.
On the other hand, if $X\to Z$ is a Mori fibre space, then we stop the LMMP and in this
case $M\equiv 0/Z$ by Proposition \ref{p-Mfs} and calculations similar to those above.
Applying this argument to every step of the LMMP proves the claim.
Note that the regularity of $X$ is also preserved by the LMMP because the LMMP is a $K_X$-LMMP hence
$X$ remains with terminal singularities which implies regularity.

Replacing $X$ with the end product of the LMMP we can assume either $K_X+n M$ is nef or
that there is a Mori fibre structure $X\to Z$ so that $M\equiv 0/Z$.
First assume $K_X+n M$ is nef.
There is a divisor $0\le D=\sum d_i D_i\le M$
such that $D$ is connected, the $d_i$ have no common prime factor, and $M=aD$ in a
neighbourhood of $D$ for some number $a$. In particular, $D$ is nef. We show $D$ is an indecomposable curve of canonical
type. It is enough to show $K_X|_D\equiv 0$ because $M$ not being big implies $D|_D\equiv 0$.
Since $M$ is not big and since $m(K_X+B)\sim M$, we deduce $K_X+n' M$ is not big for any $n'$.
Therefore,  $(K_X+nM+M)^2=0$ from which we deduce $(K_X+nM)\cdot M=0$, hence
$(K_X+n M)|_D\equiv 0$, so $K_X|_D\equiv 0$.

In order to apply Proposition \ref{p-s-ample-can-type} we need to show $D|_D$ is torsion.
By construction, $B|_D\equiv 0$ which implies $B=bD$ in some neighbourhood
of $D$ because $D$ is connected, where $b<1$ is a rational number. Taking $m$ so that $mb\in \Z$ we get
$$
0\sim mK_D=m(K_X+D)|_D=m(K_X+B+D)|_D-mB|_D\sim (a+m)D|_D-mbD|_D
$$
which implies $D|_D$ is torsion because $a+m-mb>0$.
Therefore, $D$ is semi-ample, hence $\kappa(M)=1$ which implies $M$ is semi-ample by Lemma \ref{l-s-ample-kappa=1}.

Now assume we have a Mori fibre structure $X\to Z$ with $M\equiv 0/Z$. If $F$ is the generic fibre,
then $M|_F\sim 0$. This implies $M$ is the pullback of
some effective divisor $N$ on $Z$. Either $N$ is ample or $N=0$,  hence in any case $M$ is semi-ample.
\end{proof}

\begin{rem}\label{r-nonclosed-surf}
Here we explain what we need from this section for the proof of Theorem \ref{t-main}.
We will need Theorem \ref{t-abund-nonclosed-surf} for the proof of Theorem \ref{t-3d-rel-mmodel}.
In turn we use Theorem \ref{t-3d-rel-mmodel} in the proofs of Corollary \ref{c-k=1-it-fib} and
Proposition \ref{k2} (steps 1 and 5)
in two situations: when (1) $F$ is smooth and when (2) $\kappa(K_F+B_F:=(K_X+B)|_F)=0$ and there is a
surjective map $F\to C$ onto an elliptic curve defined over the function field $K$ of $Z$
and such that $K_F+B_F$ is big over $C$. In each case it is enough to know that $K_F+B_F$
is semi-ample.
In case (1), we can pass to the algebraic closure $\bar K$ and deduce that $K_F+B_F$ is
semi-ample. In case (2), $m(K_F+B_F)\sim M_F\ge 0$ for some $m>0$, and using the map $F\to C$
it is relatively easy to show $M_F=0$: if not then each connected component of $M_F$ is irreducible;
let $D$ be the reduction of such a component; then there is one such $D$ which maps onto
$C$ and one can show that $D$ is an elliptic curve with $K_F\cdot D=0$ and $D|_D$ torsion;
one then applies Proposition \ref{p-s-ample-can-type} to deduce that $D$ is semi-ample,
hence $\kappa(M_F)\ge 1$, a contradiction.\\
\end{rem}

%%%%%%%%%%%%%%%%%%%%%%%%

\section{Relative good minimal models of $3$-folds}

\begin{proof}(of Theorem \ref{t-3d-rel-mmodel})
By [\ref{bi13}], $(X,B)$ has a log minimal model over $Z$. Replacing $X$ with the minimal model, we can assume
$K_X+B$ is nef$/Z$. Let $F$ be the generic fibre of $X\to Z$ and let $K_F+B_F=(K_X+B)|_F$. Then $(F,B_F)$
is klt and $K_F+B_F$ is nef with $\kappa(K_F+B_F)\ge 0$. By Theorem \ref{t-abund-nonclosed-surf},
$K_F+B_F$ is semi-ample.

If $\kappa(K_F+B_F) = 0$, then $K_F+B_F\sim_\Q 0$, hence $K_X+B\sim_\Q 0/Z$, by Lemma \ref{l-linear-pullback}.
On the other hand, if $\kappa(K_F+B_F) = 2$, then $K_X+B$ is big$/Z$, hence it is semi-ample over $Z$ by
Theorem \ref{t-mmp-3fold}.
So we will assume $\kappa(K_F+B_F) = 1$.

Since  $K_F+B_F$ is semi-ample and $\kappa(K_F+B_F) = 1$, there is a diagram
$$\centerline{\xymatrix{
& &Y \ar[ld]_\phi \ar[rd]^g  & \\
&X \ar[rd]_f  &   &S \ar[ld]^h  \\
& &Z &
}}$$
where $\phi$ is birational, $S$ is a smooth projective surface, and $\phi^*(K_X+B)|_G\sim_\Q 0$
on the generic fibre $G$ of $g$. By Lemma \ref{l-linear-pullback}, we can actually assume
$\phi^*(K_X+B)\sim_\Q 0/S$. So $\phi^*(K_X+B)\sim_\Q g^* D$ for some $\Q$-Cartier divisor
$D$ on $S$.
On the other hand, let $H$ be an ample divisor on $Z$. Then since $D$ is nef and big over $Z$,
 $D+nh^*H$ is nef and big
for any $n\gg 0$. Since we are working over $\bar{\mathbb{F}}_p$, $D+nh^*H$ is semi-ample (this follows from [\ref{Ke99}])
which implies $D$ is semi-ample over $Z$ from which we deduce $K_X+B$ is semi-ample over $Z$.\\
\end{proof}

\begin{cor}\label{c-k=1-it-fib}
Let $W$ be a smooth projective $3$-fold over an algebraically closed field $k$ of char $p>5$.
Assume $\kappa(K_W)=1$ and that $g\colon W\to C$ is the
Iitaka fibration. In addition assume $X$ is a minimal model of $W$.
Then the induced map $X\bir C$ is a morphism and $K_X\sim_\Q 0/C$.
\end{cor}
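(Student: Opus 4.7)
The plan is to first construct, via Theorem~\ref{t-3d-rel-mmodel}, an auxiliary minimal model $Y$ of $W$ on which $K_Y$ visibly pulls back from $C$, and then transfer the conclusion to $X$ using the standard fact that two $\Q$-factorial klt minimal models of the same pair are isomorphic in codimension one.

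I would first apply Theorem~\ref{t-3d-rel-mmodel} to $(W,0)$ equipped with the Iitaka fibration $g\colon W\to C$. The hypotheses are satisfied: $W$ is a smooth $3$-fold (so $(W,0)$ is klt), $C$ is a curve over $\bar{\mathbb{F}}_p$ with $p>5$, and the generic fibre $F$ of $g$ has $\kappa(K_F)\ge 0$ because writing $mK_W\sim g^*H_m+F_m$ with $F_m$ the fixed part of $|mK_W|$ (which does not contain $F$) yields $mK_F\sim F_m|_F\ge 0$. The theorem produces a good log minimal model $g_Y\colon Y\to C$ of $(W,0)$ with $K_Y$ semi-ample over $C$. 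Since $K_Y|_{F_Y}$ is semi-ample with $\kappa=0$, hence $\sim_\Q 0$, Lemma~\ref{l-linear-pullback} gives $K_Y\sim_\Q g_Y^*D$ for some $\Q$-Cartier divisor $D$ on $C$. Because $g_Y$ is a contraction, $\kappa(D)=\kappa(g_Y^*D)=\kappa(K_Y)=\kappa(K_W)=1$, forcing $\deg D>0$; so $D$ is ample, $K_Y$ is globally semi-ample, and $Y$ is a global $\Q$-factorial klt minimal model of $(W,0)$.

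Next I would transfer this to $X$. Since $X$ and $Y$ are two $\Q$-factorial klt minimal models of $(W,0)$, by the $3$-fold MMP theory in characteristic $p>5$ ([\ref{bi13}], [\ref{hx13}]) the birational map $X\bir Y$ is an isomorphism in codimension one, and on any common resolution $p\colon T\to X$, $q\colon T\to Y$ the $p$- and $q$-exceptional prime divisors coincide. Set $L:=p^*K_X-q^*K_Y$. On the open locus where $p$ and $q$ are both isomorphisms, canonical classes agree under the codimension-one isomorphism, so $L$ vanishes there and is supported on the common exceptional locus. For any curve $\Gamma$ in a fibre of $p$, $(-L)\cdot\Gamma=q^*K_Y\cdot\Gamma\ge 0$ by nefness of $K_Y$, so $-L$ is $p$-nef, and symmetrically $L$ is $q$-nef. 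The negativity lemma applied to $p$ and to $q$ forces $L\ge 0$ and $-L\ge 0$ respectively, whence $L=0$; that is, $p^*K_X=q^*K_Y$.

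Finally, set $h=g_Y\circ q\colon T\to C$. Then $h^*D=q^*K_Y=p^*K_X$ is trivial on every fibre of $p$, so (since $p$ is a birational contraction to the normal variety $X$, with connected fibres) $h$ factors through $p$ as $h=f\circ p$ for a unique morphism $f\colon X\to C$. The identity $p^*f^*D=p^*K_X$ together with injectivity of $p^*$ on $\Q$-Cartier classes gives $K_X\sim_\Q f^*D$, hence $K_X\sim_\Q 0/C$. The main obstacle is Theorem~\ref{t-3d-rel-mmodel} itself, which rests on the log surface abundance (Theorem~\ref{t-abund-nonclosed-surf}) over nonclosed fields and the Patakfalvi semi-positivity; once this is in hand, the remaining negativity and rigidity steps are standard.
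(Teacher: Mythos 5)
Your proof is in substance the paper's own argument: build a relative minimal model $Y$ over $C$, show $K_Y\sim_\Q 0/C$ so that $K_Y$ is the pullback of an ample divisor on $C$ and $Y$ is a global minimal model, then transfer the conclusion to the given minimal model $X$ via the fact that two minimal models are isomorphic in codimension one and have equal pullbacks of canonical divisors on a common resolution. Your second half (negativity giving $p^*K_X=q^*K_Y$, then the rigidity argument producing the morphism $X\to C$ with $K_X\sim_\Q f^*D$) is a correct and more detailed rendering of the paper's terse assertion that the map $X\bir C$ is a morphism because ``$C$ is the canonical model of both $X$ and $Y$.''

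The one point to fix is the field hypothesis. The corollary is stated over an arbitrary algebraically closed field $k$ of characteristic $p>5$, but you invoke Theorem \ref{t-3d-rel-mmodel}, which is stated only over $\bar{\mathbb{F}}_p$, and you even assert that $C$ is a curve over $\bar{\mathbb{F}}_p$, which is not among the hypotheses; as written, your proof covers only $k=\bar{\mathbb{F}}_p$. The repair is immediate: since $g$ is the Iitaka fibration and $\kappa(K_W)=1$, the generic fibre satisfies $\kappa(K_W|_F)=0$ (a fact you use later for $K_Y|_{F_Y}$ but should state and justify up front, as the paper does), and in the $\kappa=0$ case the proof of Theorem \ref{t-3d-rel-mmodel} works over any algebraically closed field of characteristic $p>5$, as the paper remarks right after its statement; alternatively, follow the paper's route directly, taking a relative minimal model via [\ref{bi13}], checking the generic fibre is regular with $K$ nef and $\kappa=0$, and applying Theorem \ref{t-abund-nonclosed-surf} together with Lemma \ref{l-linear-pullback}. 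With that adjustment your argument is complete and matches the paper's.
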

\begin{proof}
Let $Y$ be minimal model of $W$ over $C$ which comes with a morphism $r\colon Y\to C$.
Let $R$ be the generic fibre of $r$. We show that $R$ is a regular surface.
Since $W$ is smooth, $Y$ has terminal singularities. So $R$ also has terminal singularities
because if $\phi\colon V\to Y$ is a resolution, then it induces a birational morphism $\psi\colon S\to R$, where
$S$ is the generic fibre of $V\to C$, such that $K_S=\psi^*K_R+E$
where $E\ge 0$ is exceptional over $R$. Therefore, the minimal resolution of $R$ is isomorphic to
$R$, hence $R$ is regular.

Since $\kappa(K_W)=1$ and since $g$ is the Iitaka fibration of $K_W$, $\kappa(K_R)=0$. Moreover,
as $K_Y$ is nef over $C$, $K_R$ is nef too.
 Therefore, by Theorem \ref{t-abund-nonclosed-surf}, $K_R\sim_\Q 0$. This implies $K_Y\sim_\Q 0/C$
 as $K_Y$ is nef over $C$, by Lemma \ref{l-linear-pullback}. Thus $K_Y$ is the pullback of an ample divisor on $C$.
In particular, this means $Y$ is a minimal model of $W$ globally, not just over $C$.

Now let $X$ be any minimal model of $W$. Then $X$ and $Y$ are isomorphic in codimension one.
Moreover, the induced map $X\bir C$ is a morphism because $C$ is the canonical model of both $X$
and $Y$. Therefore, $K_X\sim_\Q 0/C$ as claimed.\\
\end{proof}

%%%%%%%%%%%%%%%%%%%%%%%%%%%%%
\section{Kodaira dimensions}

In this section we prove some results on Kodaira dimensions which will be used in the proof of
Theorem \ref{t-main}.

\begin{prop} \label{gk}
Let $f\colon X\rightarrow Z$ be a contraction from a smooth projective variety onto a smooth projective curve
over an algebraically closed field $k$ of char $p>0$. Assume there is an integer $m>1$ such that $f_*\omega_{X/Z}^m$ 
is non-zero and nef. If either
\begin{itemize}
\item[(1)]
$g(Z) >1$; or
\item[(2)]
$g(Z) = 1$ and $\deg f_*\omega_{X/Z}^m>0$,
\end{itemize}
then $\kappa(K_X) \geq \kappa(K_F) + 1$.
\end{prop}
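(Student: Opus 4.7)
The plan is to show that $h^0(X, nmK_X)$ grows at least like $n^{\kappa(K_F)+1}$ as $n\to\infty$. Set $\mathcal{E} := f_*\omega_{X/Z}^m$, nef of rank $r = h^0(F, mK_F)\ge 1$, and $d := \kappa(K_F)\ge 0$. After replacing $m$ by a sufficiently divisible positive multiple (the hypotheses propagate via the multiplication maps $\mathrm{Sym}^s f_*\omega_{X/Z}^m\to f_*\omega_{X/Z}^{sm}$ together with Theorem \ref{t-Patakfalvi-main}), I may assume $|mK_F|$ realises a model of the Iitaka fibration of $F$.

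Set $\mathcal{F} := f_*\omega_X^m = \mathcal{E}\otimes\omega_Z^m$. In case $(1)$, $\omega_Z^m$ is ample, so $\mathcal{F}$ is a nef bundle tensored with an ample line bundle, hence ample. In case $(2)$, $\omega_Z\cong\mathcal{O}_Z$, so $\mathcal{F} = \mathcal{E}$ is nef of positive degree, equivalently $\mathcal{O}_{\PP(\mathcal{F})}(1)$ is nef and big. In both cases every $\mathrm{Sym}^n\mathcal{F}$ inherits the positivity.

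I would then analyse the fibrewise multiplication $\mu_n:\mathrm{Sym}^n\mathcal{F}\to f_*\omega_X^{nm}$, and let $\mathcal{I}_n$ denote its image, a locally free sheaf on the smooth curve $Z$ that is simultaneously a quotient of $\mathrm{Sym}^n\mathcal{F}$ and a subsheaf of $f_*\omega_X^{nm}$. Generically over $Z$, $\mu_n$ is the multiplication $\mathrm{Sym}^n H^0(F, mK_F)\to H^0(F, nmK_F)$; by the choice of $m$, its image has dimension at least $c_1\,n^d$ for $n\gg 0$ (it is the $n$-th graded piece of the homogeneous coordinate ring of the Iitaka image of $F$ in $\PP^{r-1}$), so $\mathrm{rank}\,\mathcal{I}_n\gtrsim n^d$. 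Slope / Harder--Narasimhan estimates on the curve $Z$ then deliver $\deg\mathcal{I}_n\gtrsim n^{d+1}$: in case $(1)$, ampleness of $\mathcal{F}$ gives $\mu_{\min}(\mathrm{Sym}^n\mathcal{F})\ge n\mu_{\min}(\mathcal{F})>0$, and the quotient $\mathcal{I}_n$ inherits $\mu(\mathcal{I}_n)\ge n\mu_{\min}(\mathcal{F})$; in case $(2)$ one exploits $\deg\mathcal{F}>0$ together with the Atiyah classification of vector bundles on an elliptic curve, possibly after an isogeny base-change, to force the same conclusion. Riemann--Roch on $Z$ then yields
\[
h^0(Z, \mathcal{I}_n)\ge \chi(\mathcal{I}_n) = \deg\mathcal{I}_n + (1-g)\,\mathrm{rank}\,\mathcal{I}_n\gtrsim n^{d+1},
\]
the degree term dominating. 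Since $h^0(X, nmK_X) = h^0(Z, f_*\omega_X^{nm})\ge h^0(Z, \mathcal{I}_n)$, this gives $\kappa(K_X)\ge d+1 = \kappa(K_F)+1$.

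The hard part will be the degree estimate for $\mathcal{I}_n$, namely ensuring that the kernel of $\mu_n$ does not absorb the positivity of $\mathrm{Sym}^n\mathcal{F}$. Case $(1)$ is cleanest because ampleness of $\mathcal{F}$ provides uniform minimum-slope control in all symmetric powers. Case $(2)$ is more delicate because nef bundles on an elliptic curve can have slope-zero Harder--Narasimhan pieces, so one must use $\deg\mathcal{F}>0$ more substantially, e.g.\ via an isogeny of $Z$ to split off trivial summands. Only numerical positivity enters the estimates, so positive characteristic presents no extra obstruction.
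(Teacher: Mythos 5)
Your first reduction is where the proof breaks. You replace $m$ by a sufficiently divisible multiple so that $|mK_F|$ realises the Iitaka fibration of $F$, but the hypotheses (non-vanishing, nefness, and in case (2) positivity of the degree of $f_*\omega_{X/Z}^m$) are given for one specific $m$ only, and they do not propagate: the image of $\mathrm{Sym}^s f_*\omega_{X/Z}^{m}\to f_*\omega_{X/Z}^{sm}$ is merely a subsheaf, and a bundle containing a nef subsheaf need not be nef; Theorem \ref{t-Patakfalvi-main} does not apply either, since in Proposition \ref{gk} $K_X$ is not assumed ample (or even nef) over $Z$ and nothing is assumed about strong $F$-regularity of the general fibres. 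Without this reduction your count only gives $\kappa(K_X)\ge \dim\Phi_{|mK_F|}(F)+1$, which can be strictly smaller than $\kappa(K_F)+1$. A second genuine gap is the degree estimate in case (2): a nef bundle of positive degree on an elliptic curve can have trivial summands (e.g. $\mathcal{O}_Z\oplus A$), so $\mathrm{Sym}^n\mathcal{E}$ has degree-zero quotients of large rank, $\mu_{\min}=0$, and nothing in your sketch prevents $\deg\mathcal{I}_n$ from growing too slowly; since $g(Z)=1$ the rank term in Riemann--Roch contributes nothing, so the whole conclusion hinges on exactly the step you defer to ``isogeny plus Atiyah classification.'' Finally, the claim that ``only numerical positivity enters, so positive characteristic presents no extra obstruction'' is too quick even in case (1): in char $p$, $\mu_{\min}$ is not superadditive under symmetric powers in general (this requires strong semistability, Ramanan--Ramanathan/Langer), so the inequality $\mu_{\min}(\mathrm{Sym}^n\mathcal{F})\ge n\mu_{\min}(\mathcal{F})$ needs justification, though that part is repairable.

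For contrast, the paper never counts sections of $nmK_X$: it produces a single effective divisor and invokes the easy additivity Lemma \ref{l-adtv-of-kdim}, which computes $\kappa(D|_F)$ using all multiples of $D|_F$, so no normalisation of $m$ via the Iitaka fibration is needed. In case (1) it uses that $S^l\bigl(f_*\omega_{X/Z}^m\otimes\mathcal{O}_Z(P)\bigr)$ is globally generated for $l\gg0$ together with the non-zero multiplication map to make $l(mK_{X/Z}+f^*P)$ effective, and the hypothesis $m>1$ is used precisely to keep the leftover ample pullback $(m-1)lf^*P$ before applying the lemma. In case (2) it takes an \'etale cover $\pi\colon Z'\to Z$ of degree $d>\mathrm{rank}\, f_*\omega_{X/Z}^m$, so that $\chi\bigl(\pi^*f_*\omega_{X/Z}^m\otimes\mathcal{O}_{Z'}(-P')\bigr)=d\deg f_*\omega_{X/Z}^m-r>0$ yields a section, i.e. $mK_{X'}\sim f'^*P'+E$ with $E\ge0$, and concludes by Lemma \ref{l-adtv-of-kdim} and $\kappa(K_{X'})=\kappa(K_X)$. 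If you want to keep your framework, the cleanest repair is to abandon the growth count and feed the single-section statements above into Lemma \ref{l-adtv-of-kdim} in the same way.
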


\begin{proof}
(1) 
Since $f_*\omega_{X/Z}^m$ is non-zero, for each integer $l>0$ there is a non-zero map 
 $S^l(f_*\omega_{X/Z}^m)\to f_*\omega_{X/Z}^{lm}$. 
Since $f_*\omega_{X/Z}^m$ is nef hence weakly positive, for sufficiently big $l$, $S^l(f_*\omega_{X/Z}^m\otimes \mathcal{O}_Z(P))$ is globally generated, so $f_*\omega_{X/Z}^{lm}\otimes \mathcal{O}_Z(lP))$ has a non-zero global section which implies
$l(mK_{X/Z} + f^*P)$ is linearly equivalent to an effective divisor where $P$ is a point on $Z$.

As $g(Z) > 1$, we can write $K_Z \sim P + N$ where $N$ is an effective divisor. Then
$$
lmK_X \sim  l(mK_{X/Z} + f^*P) + (m-1)lf^*P + lmN
$$
Applying Lemma \ref{l-adtv-of-kdim} we are done in this case.

(2)
Let $r=\text{rank }f_*\omega_{X/Z}^m$.
Consider the base change
$$
\xymatrix{X'=X\times_ZZ'\ar[d]_{f'}\ar[rr]^g&& X\ar[d]^f\\
Z '\ar[rr]^{\pi}&& Z }$$
where $\pi$ is an \'etale map of degree $d>r$ and $Z'$ is integral.
Note that such a $\pi$ exists by considering the algebraic fundamental
group of $Z$. Moreover, $Z'$ is again an elliptic curve.
Since $f$ and $\pi$ are both flat, we have
$\omega_{X'/Z'}=g^*\omega_{X/Z}$ and $f'_*\omega_{X'/Z'}^m\cong \pi^*f_*\omega_{X/Z}^m$.
By Riemann-Roch for vector bundles over a curve, we have
$$
\begin{array}{rcl}&&h^0(f'_*\omega_{X'/Z'}^m\otimes \mathcal{O}_{Z'}(-P'))\\
&\geq& \deg (f'_*\omega_{X'/Z'}^m\otimes \mathcal{O}_{Z'}(-P'))\\
&=&\deg f'_*\omega_{X'/Z'}^m-r\\
&=&d\deg f_*\omega_{X/Z}^m-r>0,
\end{array}$$
where $P'\in Z'$ is a closed point. So $mK_{X'} = mK_{X'/Z'}\sim f'^*P'+E$ for some effective divisor $E$ on $X'$. Then
$$
\kappa(K_X)=\kappa(K_{X'})= \kappa(K_{X'} +f'^*P')\ge \kappa(K_{F'})+1=\kappa(K_F)+1
$$
by Lemma \ref{l-adtv-of-kdim} where $F'$ is the generic fibre of $f'$.\\
\end{proof}

\begin{prop}\label{k0}
Let $f\colon X\rightarrow Z$ be a contraction from a normal projective variety to an elliptic curve over $\bar{\mathbb{F}}_p$.
Assume that $f_*\omega_{X/Z}^l$ is a non-zero nef vector bundle for some $l>0$. Then $\kappa(K_X) \geq 0$.
\end{prop}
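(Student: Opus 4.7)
The plan is to split into two cases according to $\deg E$, where $E := f_*\omega_{X/Z}^l$. Since $E$ is nef of positive rank, $\deg E \geq 0$. In the first case $\deg E > 0$, the hypotheses of Proposition \ref{gk}(2) are satisfied and we get the stronger conclusion $\kappa(K_X) \geq \kappa(K_F)+1 \geq 0$; so the real content of the proposition is the degree-zero case, which is where the hypothesis $k = \bar{\mathbb{F}}_p$ must enter essentially.

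So assume $\deg E = 0$. The key structural observation is that a nef vector bundle of slope $0$ on a curve is semistable (its Harder--Narasimhan filtration is forced to be trivial), so I would pass to a Jordan--H\"older filtration $0 = E_0 \subset E_1 \subset \cdots \subset E_r = E$ whose graded pieces $L_i := E_i/E_{i-1}$ are stable of slope $0$. A useful fact on an elliptic curve is that a stable bundle of slope $0$ must be a line bundle of degree $0$, since stable bundles on an elliptic curve exist only for coprime rank and degree and $\gcd(r,0) = r$ forces $r=1$; hence every $L_i$ is a numerically trivial line bundle. Over $\bar{\mathbb{F}}_p$ these are torsion in $\mathrm{Pic}^0(Z)$ by the Keel result recalled earlier in the paper.

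The next move is to trivialize the $L_i$ simultaneously. Each torsion $L_i$ is killed by a cyclic \'etale cover of $Z$, and taking a compositum I obtain a finite \'etale cover $\pi\colon Z' \to Z$, which is again an elliptic curve, with $\pi^*L_i \cong \mathcal{O}_{Z'}$ for all $i$. Pulling back the JH filtration along the flat morphism $\pi$, the bottom piece becomes $\mathcal{O}_{Z'} \hookrightarrow \pi^*E$, so in particular $H^0(Z', \pi^*E) \neq 0$.

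Finally I would convert this section into an effective pluri-canonical divisor on a finite \'etale cover of $X$. Forming the Cartesian square along $\pi$ yields $f'\colon X' := X\times_Z Z' \to Z'$ and an \'etale map $g\colon X'\to X$; by flat base change together with compatibility of the relative dualizing sheaf with \'etale base change, $\pi^*E \cong f'_*\omega_{X'/Z'}^l$. Combined with the section produced above, $l K_{X'/Z'}$ has a non-zero global section, and since $Z'$ is again elliptic we have $K_{X'/Z'} \sim K_{X'}$. Thus $\kappa(K_{X'}) \geq 0$, and because $g$ is finite \'etale we conclude $\kappa(K_X) = \kappa(K_{X'}) \geq 0$. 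The main subtle step is the structural one in the middle: ensuring that the JH graded pieces are line bundles and that they can be trivialized simultaneously by a single \'etale cover, both of which rely on features special to elliptic curves and to the ground field $\bar{\mathbb{F}}_p$.
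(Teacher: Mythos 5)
Your proposal is correct and follows essentially the same route as the paper: the same dichotomy on $\deg f_*\omega_{X/Z}^l$, and in the degree-zero case the same mechanism of extracting a numerically trivial (hence, over $\bar{\mathbb{F}}_p$, torsion) line subbundle, killing it by an \'etale isogeny $Z'\to Z$, and using flat base change to get a section of $f'_*\omega_{X'/Z'}^l$ with $K_{X'/Z'}\sim K_{X'}$; your Jordan--H\"older/stability argument is just a repackaging of the paper's Atiyah-type decomposition $\bigoplus_i V_i\otimes L_i$. The one loose end is the positive-degree case: Proposition \ref{gk}(2) is stated for $X$ smooth and $m>1$, and the inequality $\kappa(K_F)+1\ge 0$ tacitly needs $\kappa(K_F)\ge 0$, so it does not literally apply here where $X$ is only normal and possibly $l=1$; the paper instead concludes directly from Riemann--Roch on the genus-one base, $h^0(f_*\omega_{X/Z}^l)\ge \chi(f_*\omega_{X/Z}^l)=\deg f_*\omega_{X/Z}^l>0$, which is the one-line fix you should use.
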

\begin{proof}
If $\deg f_*\omega_{X/Z}^l >0$, then
$$
h^0(\omega_X^l)=h^0(f_*\omega_{X/Z}^l) \geq \chi(f_*\omega_{X/Z}^l) >0
$$
So we can assume $\deg f_*\omega_{X/Z}^l  = 0$. The vector bundle $f_*\omega_{X/Z}^l$ can
be decomposed into a direct sum $\bigoplus_iV_i \otimes L_i$ where $V_i$ are nef indecomposable
vector bundles with $h^0(V_i) = 1$ and $L_i$ are line bundles with $\deg L_i = 0$. Since we work over $\bar{\mathbb{F}}_p$,
$L_1$ is torsion, say of order $n$. We have an \'{e}tale cover $\pi\colon Z' \rightarrow Z$
induced by the relation $L_1^n \simeq \mathcal{O}_Z$. Consider the base change
$f'\colon X' = X \times_Z Z' \rightarrow Z'$. Then
$f'_*\omega_{X'/Z'}^l \cong \pi^* f_*\omega_{X/Z}^l$.
So $f'_*\omega_{X'/Z'}^l$ contains $\pi^*V_1 \otimes \pi^*L_1 \cong \pi^*V_1$,
hence
$$
h^0(\omega_{X'}) = h^0(f'_*\omega_{X'/Z'}^l) \geq h^0(\pi^*V_1) = 1
$$
So $\kappa(K_X) = \kappa(K_{X'}) \geq 0$.\\
\end{proof}

\begin{prop}\label{k2}
Let $f\colon X\rightarrow Z$ be a contraction from  a projective $3$-fold with $\Q$-factorial
terminal singularities to an elliptic curve
over $\bar{\mathbb{F}}_p$ with $p>5$.
Assume that $K_{X}$ is big over $Z$ and that the generic fiber of $f$ is smooth. Then $\kappa(X) \geq 2$.
\end{prop}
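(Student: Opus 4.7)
The plan is to reduce to the case where $K_X$ is ample over $Z$ via a relative LMMP and then apply Patakfalvi's semi-positivity on the elliptic base $Z$. Since $F$ is smooth and $K_X$ is big over $Z$, $K_F$ is big and $\kappa((K_X)|_F)=2\geq 0$, so Theorem \ref{t-3d-rel-mmodel} (with $B=0$) yields a good relative log minimal model of $X$ over $Z$. Replacing $X$ with its relative ample model we may assume $K_X$ is ample over $Z$, while $X$ has canonical singularities; this preserves $\kappa(K_X)$, and general fibers still have canonical (hence strongly $F$-regular in char $>5$) singularities, so that Theorem \ref{t-Patakfalvi-main} applies. Its second bullet yields that $f_*\omega_{X/Z}^m\cong f_*\omega_X^m$ is nef on $Z$ for sufficiently divisible $m$.

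We now split on the degree. If $\deg f_*\omega_X^m>0$ for some $m$, then Proposition \ref{gk}(2) (with $g(Z)=1$) at once gives $\kappa(K_X)\geq\kappa(K_F)+1=3$. Otherwise $\deg f_*\omega_X^m=0$ for all sufficiently divisible $m$, and Proposition \ref{k0} gives $\kappa(K_X)\geq 0$; we need to strengthen this to $\kappa(K_X)\geq 2$, which we do by contradiction. Assume $\kappa(K_X)\leq 1$. In the subcase $\kappa(K_X)=1$, take a resolution $W\to X$ and pass to a minimal model of $W$ (which exists by Theorem \ref{t-mmp-3fold}); then Corollary \ref{c-k=1-it-fib} provides a morphism $h$ from this minimal model (still denoted $X$) to an Iitaka-base curve $C$ with $K_X\sim_\Q h^*D$ for an ample $\Q$-divisor $D$ on $C$. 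Form the product map $X\to Z\times C$ with image $V$. Because $F$ is connected (as generic fiber of the contraction $f$), we have $\dim V\in\{1,2\}$. If $\dim V=1$, then $V$ is birational to $Z$ and $h$ factors as $\pi\circ f$ for some finite $\pi\colon Z\to C$, so $K_F\sim_\Q (f^*\pi^*D)|_F=0$, contradicting $K_F$ big. If $\dim V=2$, then $h|_F\colon F\to C$ is either constant (again $K_F\sim_\Q 0$, contradiction) or surjective (so $K_F=h|_F^*D$ is numerically trivial on fibers of the surface-to-curve fibration $h|_F$, forcing $K_F^2=0$ and contradicting $K_F$ big).

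The remaining subcase $\kappa(K_X)=0$ is the main obstacle. The strategy, following Remark \ref{r-nonclosed-surf}, is to pass to a suitable étale cover of $Z$ that trivializes the torsion line bundle summands in the Atiyah decomposition of $f_*\omega_X^m$ on the elliptic curve $Z$ and to extract from this a secondary fibration of $X$ whose generic fiber $F'$ is a surface with $\kappa(K_{F'})=0$ admitting a surjection to an elliptic curve, with $K_{F'}$ still big relative to that elliptic curve. Applying Theorem \ref{t-3d-rel-mmodel} to this secondary setup---together with the nonclosed-field abundance Theorem \ref{t-abund-nonclosed-surf} and the semi-ampleness statement Proposition \ref{p-s-ample-can-type}---produces the required contradiction. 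This step is the most delicate part and crucially uses the nonclosed-field surface theory developed in the previous section.
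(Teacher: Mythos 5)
There is a genuine gap, and it sits exactly at the hardest point of the proof. In the case $\deg f_*\omega_X^m=0$ you obtain $\kappa(K_X)\ge 0$ from Proposition \ref{k0}, and then for the subcase $\kappa(K_X)=0$ you only offer a strategy (``pass to an \'etale cover of $Z$ \dots extract a secondary fibration whose generic fiber is a surface with $\kappa=0$ mapping onto an elliptic curve'') without constructing anything: when $\kappa(K_X)=0$ there is no Iitaka fibration available to produce such a secondary fibration, and Remark \ref{r-nonclosed-surf} does not describe a method for this situation --- it merely records which special cases of the nonclosed-field surface theory are invoked elsewhere (in Corollary \ref{c-k=1-it-fib} and in Steps 1 and 5 of the paper's own proof). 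The paper's actual route through this point, which is the technical heart of the proposition and is entirely absent from your proposal, is the following: with $lK_X\sim M\ge 0$ on the relative canonical model, one first proves that $K_X|_M$ is semi-ample, by adjunction on the normalization $S$ of each component $T$ of $M$, abundance for surfaces over $\bar{\mathbb{F}}_p$ (Theorem \ref{abd-dim2}), and Keel's gluing results to pass from $S$ to $T$ and then to all of $\Supp M$; one then pushes forward the sequences $0\to\mathcal{O}_X((k-1)M)\to\mathcal{O}_X(kM)\to\mathcal{O}_M(kM)\to 0$ and uses Riemann--Roch for nef degree-zero vector bundles on the elliptic curve $Z$ to show that $h^0(kM)\ge 2$ for some $k$, i.e. $\kappa(K_X)\ge 1$. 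Without an argument of this kind your proof establishes only $\kappa(K_X)\ge 0$ in the degree-zero case, so the conclusion $\kappa(K_X)\ge 2$ is not reached.

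Your treatment of the subcase $\kappa(K_X)=1$ is workable in spirit but also needs care: after replacing $X$ by a global minimal model of a resolution $W$, the map to $Z$ is a priori only a rational map, so ``the generic fiber of the contraction $f$'' on the new model is not available without further argument. The paper avoids this by staying on the relative canonical model, where $\nu(K_X)=2$ has already been fixed in Step 2, and deriving the contradiction $\nu(K_X)=1$ directly from $K_X\sim_\Q 0/C$ given by Corollary \ref{c-k=1-it-fib}; your product-map analysis of $X\to Z\times C$ can likely be repaired along those lines. But as written, the essential missing piece remains the exclusion of $\kappa(K_X)=0$.
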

\begin{proof}
\emph{Step 1.}
By Theorem \ref{t-mmp-3fold}, there is a minimal model $Y$ for $X$ over $Z$.
Let $F$ and $G$ be the generic fibres of $X\to Z$ and $Y\to Z$ respectively, and let
$\pi\colon F\to G$ be the induced morphism. We want to show $G$ is smooth over $K$ where $K$
is the function field of $Z$.
We denote $F_{\bar K}\to G_{\bar K}$ by $\bar\pi$,
$G_{\bar K}\to G$ by $\mu$, and $F_{\bar K}\to F$ by $\rho$. Since $\phi\colon X\bir Y$ is a contraction near $F$,
$\pi$ is also a contraction, so $\pi_*\mathcal{O}_F=\mathcal{O}_G$. Moreover, since $\mu$ is flat,
we have
$$
\bar{\pi}_*\mathcal{O}_{F_{\bar K}}=\bar{\pi}_*\rho^*\mathcal{O}_{F}={\mu}^*\pi_*\mathcal{O}_{F}
=\mu^*\mathcal{O}_{G}=\mathcal{O}_{G_{\bar K}}
$$
Therefore, $\bar \pi$ is a contraction which in particular implies $G_{\bar K}$ is normal.

On the other hand, near $F$, $K_X=\phi^*K_Y+E$ for some effective divisor $E$
whose support near $F$ is equal to the union of all the exceptional$/Y$ divisors near $F$.
Thus $K_{F_{\bar K}}=\bar{\pi}^*K_{G_{\bar K}}+E_{\bar K}$ where $E_{\bar K}\ge 0$
whose support contains all the exceptional curves of $\bar{\pi}$. We deduce $G_{\bar K}$
has terminal singularities, hence it is smooth. Therefore, $G$ is smooth over $K$.

 Replacing $X$ with $Y$, we can assume $K_X$ is nef$/Z$.
Since $Z$ is an elliptic curve, $K_X$ is actually globally nef by the cone theorem [\ref{bw14}, Theorem 1.1].\\

\emph{Step 2.}
Let $X'$ be the canonical model of $X$ over $Z$ which exists by Theorem \ref{t-mmp-3fold}.
So $K_{X'}$ is ample over $Z$.
The general fibres of $f'\colon X'\to Z$ are normal because they are regular in codimension one 
and they are Cohen-Macaulay. Thus they have canonical singularities, hence are strongly $F$-regular. Therefore, by Theorem \ref{t-Patakfalvi-main},  $f'_*\omega_{X'}^m=f_*\omega_{X}^m$
is a nef vector bundle for any sufficiently divisible $m>0$.

If $\nu(K_X) = 3$, then $\kappa(K_X)=3$, so there is nothing to prove. On the other hand,
if $f_*\omega_X^m$ is nef with $\deg f_*\omega_X^m > 0$ for some $m>0$, 
then we are done by applying Proposition \ref{gk} to a resolution of $X$.
 So in the following we assume $\nu(K_X) = 2$ and that $\deg f_*\omega_X^m = 0$ for any sufficiently divisible positive integer $m$.\\

\emph{Step 3.}
To ease notation, from here until the end of Step 4 we replace $X$ with $X'$.
Applying Proposition \ref{k0} and Step 2,
we find a positive integer $l$ and a divisor $M\ge 0$ such that $lK_X \sim M$
and that $f_*\omega_X^{kl}$ is nef for any $k\geq 1$.
We prove that $K_X|_M$ is semi-ample.

Let $T$ be a component of $M$. Then $T$ is horizontal$/Z$ otherwise $M$ would be big by considering $M^3$.
Take the normalization $S\rightarrow T$, and let ${C} \subset S$ be the reduction of the conductor. Write  $M = nT + T'$ where $T$ is not a component of $T'$. By adjunction [\ref{Ke99}, 5.3], we have
$$
(K_X + \frac{M}{n})|_S \sim_{\mathbb{Q}} K_S + {C} + {D}
$$
where ${D}$ is a canonically defined effective $\Q$-divisor and $|_S$ means pullback to $S$.
Then $K_X|_S$ is semi-ample on $S$ by Theorem \ref{abd-dim2}.
We want to argue that semi-ampleness of $K_X|_S$ implies semi-ampleness of $K_X|_T$.

Since $K_X$ is nef and $K_X^3 = 0$, we have $K_X^2\cdot T = 0$, i.e., $(K_X|_T)^2 = 0$, thus $(K_X|_S)^2 = 0$. Then since $K_X|_T$ is ample over $Z$, we get $\nu(K_X|_S) = 1$. We denote by $h\colon S \rightarrow V$ the map  associated  to $K_X|_S$ and denote by $H$ a general fiber which has genus $g(H) \geq 1$ because it dominates $Z$. As $K_X|_S \cdot H = 0$, we have
$(K_S + {C} + {D})\cdot H = 0$, hence
$$
0\le \deg K_H = (K_S + H) \cdot H = K_S \cdot H = -({C} + {D}) \cdot H\leq 0
$$
Therefore, ${C} \cdot H =  {D}\cdot H = 0$, and $H$ is smooth with genus $g(H) = 1$.
Applying [\ref{Ke99}, Corollary 2.15], we conclude that $K_X|_T$ is semi-ample, and the associated map $\bar{h}\colon T \rightarrow \bar{V}$ is an elliptic fibration. In particular, this means that no component of $T'$ intersects the general fibres of
$\bar h$.

Let $R\le M$ be a reduced divisor and assume that $K_X|_R$ is semi-ample. If $R=\Supp M$, then we are done by [\ref{Ke99}, Lemma 1.4].
If not, pick a component $T$ of $M$ which is not a component of $R$. As noted above, $K_X|_T$ is semi-ample defining a
contraction $\bar{h}\colon T \rightarrow \bar{V}$ whose general fibres do not intersect any component of $R$.
Applying [\ref{Ke99}, Corollary 2.12], we deduce that $K_X|_{T\cup R}$ is semi-ample.
Inductively, we extend $R$ to the support of $M$.\\

\emph{Step 4.}
In this step we prove $\kappa(K_X) \geq 1$.
Consider the following exact sequence
\begin{equation}\label{sq3}
 0 \rightarrow \mathcal{O}_X((k-1)M) \rightarrow \mathcal{O}_X(kM) \rightarrow \mathcal{O}_M(kM) \rightarrow 0
 \end{equation}
For $k\geq 2$, by assumptions in Step 2, both $f_*\mathcal{O}_X(kM)$ and $f_*\mathcal{O}_X((k-1)M) $ are nef vector bundles with
$$
\deg f_*\mathcal{O}_X(kM) = \deg f_*\mathcal{O}_X((k-1)M) = 0
$$
If
$$
h^0(f_*\mathcal{O}_X(kM)) = h^0(f_*\mathcal{O}_X((k-1)M)) = 1
$$
for all $k \geq2$, then $ h^1(f_*\mathcal{O}_X((k-1)M)) = 1$ for such $k$ by Riemann-Roch, and by taking cohomology of the exact sequence (\ref{sq3}), we conclude that $h^0(\mathcal{O}_M(kM)) \leq 1$. However, this contradicts semi-ampleness of $M|_M$ and the property $\nu(M|_M) \geq 1$.
Therefore, $\kappa(K_X) \geq 1$.\\

\emph{Step 5.}
Assume $\kappa(K_X)=1$. We will derive a contradiction.
Let $W\to X$ be a resolution so that the Iitaka fibration $W\to C$ is a morphism.
By Corollary \ref{c-k=1-it-fib}, the induced map $X\bir C$ is a morphism and $K_X\sim_\Q 0/C$.
In particular, $\nu(K_X)=1$ which contradicts the assumption $\nu(K_X)=2$.\\
\end{proof}

%%%%%%%%%%%%%%%%%%%%%%%%%%%%%
\section{Proof of Theorem \ref{t-main}}

\begin{proof}(of Theorem \ref{t-main})
We can assume $\kappa(K_Z) \geq 0$ and $\kappa(K_F) \geq 0$.
As pointed out in the introduction $C_{3,2}$ follows from [\ref{cz13}], so we will assume $n=3$ and $m=1$.
Replacing $X$ with a minimal model over $Z$, we can assume $K_X$ is nef$/Z$.
Of course $X$ may not be smooth any more but it has $\mathbb{Q}$-factorial terminal singularities.
The generic fibre stays smooth by the arguments in Step 1 of the proof of Theorem \ref{k2}.

If $\kappa(K_F) = 0$, then by Theorem \ref{t-3d-rel-mmodel} and Lemma \ref{l-linear-pullback}, 
$K_{X/Z} \sim_{\mathbb{Q}} f^*M$ for some $\Q$-divisor $M$. Moreover,
by Theorem \ref{t-Patakfalvi-main}, $K_{X/Z}$ is nef, hence $\deg M \geq 0$ which implies
$\kappa(M) \geq 0$ as we are working over $\bar{\mathbb{F}}_p$.
Thus $\kappa(K_{X/Z}) \geq 0$ and
$$
\kappa(K_X) = \kappa(K_{X/Z} + f^*K_Z) \geq \kappa(K_{Z})
$$

If $\kappa(K_F) = 1$, by Theorem \ref{t-3d-rel-mmodel}, there is a commutative diagram
$$\centerline{\xymatrix{
&X \ar[rd]_f\ar[rr]^g  &   &Y \ar[ld]^h  \\
& &Z &
}}$$
such that $g$ is an elliptic fibration (as $p>5$) and $K_X \sim_{\mathbb{Q}} g^*N$ for some $\Q$-divisor $N$.
By the cone theorem [\ref{bw14}, Theorem 1.1], $K_X$ is nef, hence $N$ is nef too.
On the other hand, by Theorem \ref{eft} applied to appropriate resolutions of 
$X$ and $Y$ we get $N \sim_{\mathbb{Q}} K_Y+ \Delta$  where $\Delta$ is effective.
Applying Theorem \ref{abd-dim2}, we can conclude that $K_Y+\Delta$ is semi-ample. So $K_X$ is semi-ample on $X$. Then since $K_{X/Z}$ is nef (Theorem \ref{t-Patakfalvi-main}), we have
$$
\kappa(K_X)=\nu(K_X) \geq \kappa(K_Z) + 1
$$
where $\nu(K_X)$ denotes the numerical dimension of $K_X$.

Finally assume $\kappa(K_F) = 2$. If $\kappa(K_Z)=0$ we apply Proposition \ref{k2}.
But if $\kappa(K_Z)=1$, we replace $X$ with its canonical model over $Z$ so that $K_X$ is ample
over $Z$, and we use Theorem \ref{t-Patakfalvi-main} to deduce $f_*\omega_{X/Z}^m$ is nef
for sufficiently divisible $m>0$ as in the proof of \ref{k2}; next we apply Proposition \ref{gk} to a resolution of $X$.\\
\end{proof}

\begin{proof}(of Corollary \ref{c-main})
By [\ref{Bad}, Corollary 7.3] $f$ has integral generic geometric fibre. Let $F$ be the generic fibre of $f$, let $F_1 = F\times_{K(Z)}K(Z)^{\frac{1}{p^{\infty}}}$, and let $\tilde{F}_1 \rightarrow F_1$ be a desingularization. Since $K(Z)^{\frac{1}{p^{\infty}}}$ is perfect, $\tilde{F}_1$ is smooth over $K(Z)^{\frac{1}{p^{\infty}}}$ by [\ref{Liu}, Chapter 4 Corollary 3.33]. Therefore, there exists a natural number $e$ such that $\tilde{F}_1$ can be descent to $\tilde{F}_2$, which is a desingularization of $F_2 = F\times_{K(Z)}K(Z)^{\frac{1}{p^e}}$ and smooth over $K(Z)^{\frac{1}{p^e}}$.

Denote by $F^e: Z' \rightarrow Z$ the $e$-th Frobenius iteration. We have the following commutative diagram
$$\xymatrix{X'\ar[drr]_{f'}\ar[rr]^{\pi}&&X\times_ZZ'\ar[d]\ar[rr]^g&&X\ar[d]_f\\
&&Z'\ar[rr]_{F^e}&&Z}$$
where $\pi:X'\rightarrow X\times_ZZ'$ is a resolution.  By the above argument, $f'$ has smooth generic fiber $F'$. By Theorem \ref{t-main},
$$\kappa(K_{X'}) \geq \kappa(K_{F'})+\kappa(K_{Z'}) = \kappa(K_{\tilde{F}})+\kappa(K_Z).$$

 Let $\sigma=g\pi: X' \rightarrow X$  be the natural composite map. By [\ref{cz13}, Theorem 2.4], there exists an effective $\sigma$-exceptional divisor $E$ on $X'$ such that
$$K_{X'/Z'} \leq \sigma^*K_{X/Z} + E.$$
Thus
$$K_{X'} + (p^{e} - 1)f'^*K_{Z'} \leq \sigma^*K_{X} + E.$$
We can assume $K_{Z'}$ is effective, thus
$$\kappa(K_X) = \kappa(\sigma^*K_{X} + E) \geq \kappa(K_{X'} + (p^{e} - 1)K_{Z'}) \geq \kappa(K_{X'}) \geq \kappa(K_{\tilde{F}})+\kappa(K_Z),$$
where the first ``$=$'' is from Covering Theorem \ref{ct}.

\end{proof}

%%%%%%%%%%%%%%%%%%%%%%%%%%%%%%%%%%%%%

\vspace{1cm}

DPMMS, Centre for Mathematical Sciences, University of Cambridge, Wilberforce Road, Cambridge CB3 0WB, UK\\
\email{cb496@dpmms.cam.ac.uk\\\\}

Hua Loo-Keng Key Laboratory of Mathematics, Academy of Mathematics and Systems Science,
Chinese Academy of Sciences, No. 55 Zhonguancun East Road, Haidian District, Beijing, 100190, P.R.China\\ 
yifeichen@amss.ac.cn\\\\  

College of Mathematics and Information Sciences, Shaanxi Normal University, Xi¡Çan 710062, P.R.China\\ 
lzhpkutju@gmail.com

\end{document}